\newtheorem{thm}{Theorem}[section] 
\newtheorem{assum}[thm]{Assumption}
\newtheorem{cor}[thm]{Corollary}
\newtheorem{defn}[thm]{Definition}
\newtheorem{exmpl}[thm]{Example}
\newtheorem{lem}[thm]{Lemma}
\newtheorem{prop}[thm]{Proposition}
\newtheorem{rem}[thm]{Remark}
\newtheorem{ques}[thm]{Question}
\newcommand\Cref[1]{{Corollary~\ref{#1}}}
\newcommand\Dref[1]{{Definition~\ref{#1}}}
\newcommand\Lref[1]{{Lemma~\ref{#1}}}
\newcommand\Pref[1]{{Proposition~\ref{#1}}}
\newcommand\Tref[1]{{Theorem~\ref{#1}}}
\newcommand\Sref[1]{{Section~\ref{#1}}}
\newcommand\Srefs[2]{{Sections~\ref{#1} and~\ref{#2}}}
\newcommand\Ssref[1]{{Subsection~\ref{#1}}}
\newcommand\Aref[1]{{Appendix~\ref{#1}}}
\renewcommand{\Pr}{\operatorname{P}}
\newcommand{\N}{\mathbb{N}}
\newcommand{\Z}{\mathbb{Z}}
\newcommand{\R}{\mathbb{R}}
\newcommand{\set}[1]{\left\{#1\right\}}
\newcommand{\E}{\mathbb{E}}
\newcommand{\ind}[1]{1_{#1}}
\newcommand{\Bin}{\mathrm{Bin}}
\newcommand{\Geo}{\mathrm{Geo}}
\newcommand{\range}{\mathrm{range}}
\newcommand{\diam}{\mathrm{diam}}
\newcommand{\eps}{\varepsilon}
\newcommand{\floor}[1]{\left\lfloor #1\right\rfloor}
\newcommand{\ceil}[1]{\left\lceil #1\right\rceil}
\newcommand{\dd}{\,\mathrm{d}}
\newcommand{\suchthat}{\,\ifnum\currentgrouptype=16\middle\fi|\,}
\title[A Law of Iterated Logarithm on Diagonal Products]{A Law of Iterated Logarithm on Lamplighter Diagonal Products}
\author{Gideon Amir}
\address{Department of Mathematics, Bar-Ilan University}
\email{gidi.amir@gmail.com}
\author{Guy Blachar}
\address{Department of Mathematics, Bar-Ilan University}
\email{guy.blachar@gmail.com}
\keywords{Random walks on groups; Law of iterated logarithm; diagonal products; excursions.}
\subjclass[2020]{20F69, 60B15, 05C81, 60F15, 20E22}
\begin{document}

\begin{abstract}
    We prove a Law of Iterated Logarithm for random walks on a family of diagonal products constructed by Brieussel and Zheng \cite{brzh}. This provides a wide variety of new examples of Law of Iterated Logarithm behaviours for random walks on groups. In particular, it follows that for any $\frac{1}{2}\leq \beta\leq 1$ there is a group $G$ and random walk $W_n$ on $G$ with  $\E|W_n|\simeq n^\beta$ such that
    $$0<\limsup \frac{|W_n|}{n^\beta(\log\log n)^{1-\beta}}<\infty$$
    and
    $$0<\liminf \frac{|W_n|(\log\log n)^{1-\beta}}{n^\beta}<\infty.$$
\end{abstract}

\maketitle

\section{Introduction}

Let $G$ be some finitely generated group, with a finite symmetric generating set $S$. Let $W_n$ be a random walk on $G$, with finitely supported symmetric step measure such that the support of~$W_1$ generates $G$. A central object of study in the theory of random walks on groups is the distribution of the distance $|W_n|$ of the random walk from its origin point, and its connection to other geometric and algebraic properties of $G$. One is interested both in understanding the behaviour of $|W_n|$ for specific families of  groups and in understanding the set of possible behaviours of the distance function for random walks on groups in general (often referred to as the ``inverse problem'').  Usually, fine understanding of the underlying metric structure of $G$ is lacking, and beyond some examples of polynomial growth groups and some non-amenable groups, most works focused on understanding the expected distance on some families of groups, with few works also looking at some moderate and large deviation regimes (See subsections \ref{ss:expected} and \ref{ss:LIL}).

The main contribution of this manuscript is proving a Law of Iterated Logarithm type result for random walks on a family of diagonal products studied in \cite{brzh}. These groups were used to capture a variety of behaviours of random walks on groups, in particular in terms of the expected distance. Our result provides a wide variety of Law of Iterated Logarithm type behaviours for random walks on groups.

\subsection{The expected distance $\E|W_n|$}\label{ss:expected}
Much advancement has been done in the last years regarding the expected value $\E|W_n|$, also known as the \textbf{speed} or \textbf{rate of escape} of the random walk. It is trivial to see that the expected distance is sub-additive, and in particular that $\E|W_n|\lesssim n$ (Where $\lesssim , \simeq$ denote (in)equalities up to an absolute constant that may depend on the choice of group and random walk measure but not on $n$). By \cite{leeper}, for any random walk $W_n$ one has $\E|W_n|\gtrsim \sqrt{n}$. For many ``small'' groups, such as virtually nilpotent groups, the random walk is always diffusive, that is $\E |W_n|\simeq \sqrt{n}$, while by a theorem of Kaimanovich and Vershik (see \cite{KV}) $\E|W_n|\simeq n$ for non-Liouville measures. For a long time all known examples of random walks on groups exhibited one of the two extreme behaviours above.

This lead to the following natural question, attributed to Vershik.
\begin{ques}
What functions can be realized as the speed function of a random walk on some group?
\end{ques}

The first ``non-classical'' examples towards this question were given by Erschler \cite{ersc}, who proved that if a random walk on a group $G$ satisfies $\E\left|W_n\right|\simeq n^{\alpha}$, then the random walk on $G\wr\Z$ satisfies $\E\left|W_n\right|\simeq n^{\frac{1+\alpha}{2}}$. Starting with $G=\Z$ and using this result recursively, one can realize all of the functions $f(n)=n^{1-2^{-k}}$ as speed functions of iterated wreath product.

The next step was taken by Amir and Vir\'{a}g \cite{amirvi}, who constructed groups where $W_n\simeq n^{\beta}$ for $\frac{3}{4}\leq \beta<1$ (and more generally with $\E|W_n|\simeq f(n)$ for any ``nice enough'' function $f$ within that range). Their construction uses permutational wreath product over the natural action of the mother groups~$\mathcal{M}_m$ on the boundary of their tree with $\Z$-valued lamps.

In \cite{brzh} Brieussel and Zheng constructed random walks on groups that, up to some regularity condition, capture the whole range of possible behaviours of the expected distance:
\begin{thm}[{\cite[Theorem 1.1]{brzh}}]\label{thm:brzh-main-thm}
Let $f\colon[1,\infty)\to[1,\infty)$ be a continuous function with $f(1)=1$ such that $\frac{f(x)}{\sqrt{x}}$ and $\frac{x}{f(x)}$ are non-decreasing. Then there exists a group $\Delta$ such that the random walk on $\Delta$ satisfies $\E\left|W_n\right|\simeq f(n)$.
\end{thm}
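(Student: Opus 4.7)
The plan is to construct $\Delta$ as a \emph{diagonal product} of a countable family of finite lamplighter-like groups whose parameters are tuned to $f$. I would take a sequence of scales $l_k\in\N$ and non-trivial finite groups $A_k$ (for the cancellation analysis to work, certain non-abelian choices like generalized dihedral groups are needed), and form finite building blocks $H_k$ built from $A_k\wr(\Z/l_k\Z)$. The group $\Delta$ is then a subgroup of $\prod_k H_k$ in which the cyclic coordinates are identified with a single $\Z$-coordinate through the natural projections. Generators of $\Delta$ perform one step on this common $\Z$ together with an explicit joint lamp operation, with carefully chosen weights.

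The next step is to estimate $\E|W_n^\Delta|$. On each factor $H_k$, the random walk is essentially driven by the projection to the common $\Z$-coordinate; the word length is comparable to the range of that walk plus its current position, which gives $\E|W_n^{H_k}|\simeq \min(\sqrt{n},l_k)$ (diffusive until the walk fills $\Z/l_k\Z$ at time $\simeq l_k^2$, then saturating at $\simeq l_k$). The diagonal structure and the chosen generating set yield
$$\E\bigl|W_n^\Delta\bigr|\ \simeq\ \sum_k w_k\min(\sqrt{n},l_k),$$
up to a generating-set constant, where $w_k$ encodes the weighting. Realizing $f$ then reduces to choosing $(l_k,w_k)$ so that this sum is comparable to $f(n)$. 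The hypotheses that $f(x)/\sqrt{x}$ and $x/f(x)$ are both non-decreasing are precisely the constraints that let any admissible $f$ be recovered (up to constants) as such a sum of capped square roots, via a dyadic discretization of $f$: one picks $n_k=2^k$, reads off the increments $f(n_{k+1})-f(n_k)$, and converts them into admissible parameters $l_k$ using the two monotonicity assumptions to guarantee $l_k$ is non-decreasing and the linear contribution does not overshoot.

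The main obstacle is the matching lower bound on $\E|W_n^\Delta|$. The upper bound is a fairly direct consequence of coupling to the $\Z$-projection, subadditivity on the individual factors, and the fact that each $\E|W_n^{H_k}|$ is controlled as above. The lower bound is the delicate part: one must show that the lamp configurations produced in the different factors do not cancel under the diagonal identification, so that the sum of per-factor distances is actually realized as a norm in $\Delta$ and not collapsed by a short word. This is where the non-abelian choice of $A_k$ enters — it rules out short cancellations across factors — and where a careful analysis of excursions of the $\Z$-walk (controlling how many distinct lamps are flipped an odd number of times in each independent excursion) is required. This excursion analysis, together with an entropy or pigeonhole argument at each scale $n_k$, is the technical core of Brieussel and Zheng's construction, and extending it to all $n$ requires the regularity of $f$ to bridge dyadic scales uniformly.
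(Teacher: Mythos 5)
Your construction cannot realize the required range of speeds; the gap is structural, not technical. If each building block is $A_k\wr(\Z/l_k\Z)$ with a fixed finite lamp group, the per-layer speed is at most $\min(\sqrt{n},\diam H_k)=O(\sqrt{n})$, and your claimed formula $\E|W_n^\Delta|\simeq\sum_k w_k\min(\sqrt{n},l_k)$ is then bounded by $\sqrt{n}\sum_k w_k$. For this to be finite at each $n$ you need $\sum_k w_k<\infty$, in which case the entire expression is $O(\sqrt{n})$; if $\sum_k w_k=\infty$ it diverges. So this ansatz can only produce diffusive speeds and cannot realize, say, $f(n)=n^{3/4}$. (In fact the ``weights'' $w_k$ are not a feature of the diagonal product at all: there is one canonical generating set, and the word length is an \emph{unweighted} truncated sum $|W_n|_\Delta\simeq\sum_{s\le s_0(n)}|W_n|_{\Delta_s}$, as in \eqref{eq:brzh-eq0}.)

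What the Brieussel--Zheng construction actually does, and what is missing from your sketch, is a \emph{two-parameter} building block. Each layer is $\Delta_s=\Gamma_s\wr\Z$ (over the infinite line, not a cycle), where $\Gamma_s$ is a finite group of diameter $l_s$ generated by two subgroups $A(s)$ and $B(s)$. The crucial device is that the $A(s)$-generators act on the lamp at position $0$ while the $B(s)$-generators act at a distance $k_s$ from the walker; a nontrivial element of $\Gamma_s$ beyond $A(s)\times B(s)$ is only created after a $k_s$-excursion. This yields the per-layer estimate $\E|W_n|_{\Delta_s}\simeq \sqrt{n}\,l_s+\frac{n}{k_s}$, and the second term $n/k_s$, linear in $n$, is precisely what lets the speed climb above $\sqrt{n}$. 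The dominant layer is $s_1(n)=\max\{s:k_sl_s\le\sqrt{n}\}$, giving $\E|W_n|_\Delta\simeq\sqrt{n}\,l_{s_1(n)}+n/k_{s_1(n)+1}$, and Corollary~B.3 of \cite{brzh} (stated here as Proposition~\ref{prop:approx-funcs}) converts any admissible $f$ into sequences $(k_s),(l_s)$ with this property. Your dyadic-discretization intuition for choosing parameters, and your identification of the lower bound (cancellation across factors, excursion counting, linear speed in $\Gamma_s$) as the hard part, are on target; but without the offset parameter $k_s$ the construction simply does not have enough degrees of freedom to leave the diffusive regime.
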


The examples constructed and analyzed in Brieussel and Zheng also gave new behaviours of other geometric quantities such as the entropy of the random walk, the return probabilities and the isoperimetric profile.

\subsection{The Law of Iterated Logarithm}\label{ss:LIL}
Since the possible behaviour of the expected distance is well understood in many cases, one may try and study further questions regarding the behaviour of the distance function of the random walk $W_n$. One such direction is to study the times in which the random walk is atypically far from its origin. This can be formulated by the Law of Iterated Logarithm.

The classical Law of Iterated Logarithm on $\Z$ originates from the works of Khintchine \cite{KH24} and Kolmogorov \cite{KO29}, and states that for any random walk $W_n$ with zero mean and unit variance satisfies
$$\limsup_{n\to\infty}\frac{|W_n|}{\sqrt{2n\log\log n}}=1$$
almost surely.

Unlike the expected distance which was well studied, there are much fewer examples where the Law of Iterated Logarithm of random walks on groups is understood. There are several ways to phrase a Law of Iterated Logarithm for a random walk on a general group. Perhaps the strongest one is finding a function $g(n)$ such that
$$\limsup_{n\to\infty}\frac{|W_n|}{g(n)}=1$$
almost surely. However, finding such a function requires a tight estimation for the distance $|W_n|$, which is in many cases hard to achieve.

We say that $g(n)$ is an \textbf{upper scaling function} for $W_n$, if
$$0<\limsup_{n\to\infty}\frac{|W_n|}{g(n)}<\infty$$
almost surely, and that $h(n)$ is a \textbf{lower scaling function} for $W_n$, if
$$0<\liminf_{n\to\infty}\frac{|W_n|}{h(n)}<\infty$$
almost surely.

These definitions can also be rephrased in terms of inner and outer radii. We say that a function $R(n)$ is an \textbf{outer radius} for $W_n$ if $|W_n|\ge R(n)$ finitely often with probability $1$, and a function $r(n)$ is an \textbf{inner radius} for $W_n$ if $|W_n|\leq r(n)$ finitely often with probability $1$. It follows that $g(n)$ is an upper scaling function if $Cg(n)$ is an outer radius for some constant $C>0$, but $cg(n)$ is not an outer radius for some other constant $c>0$ (and one can similarly rephrase lower scaling functions in terms of inner radii).

Let us briefly review previous works in this direction. The classical results for $\Z$ and $\Z^d$ can be found e.g.\ in \cite{Dv51}, where Dvoretzky and Erd\"{o}s give a characterization for the inner and outer radii of a simple random walk on $\Z^d$. Hebisch and Saloff-Coste generalize this theorem for arbitrary groups with polynomial volume growth of order $d\ge 3$, see \cite[Theorem 9.2]{He93}.

In \cite{Re03}, Revelle studies groups of the form $G\wr\Z$, where the random walk on $G$ has $\alpha$-tight degree of escape, and proves that $G\wr\Z$ has $\alpha'=\frac{1+\alpha}{2}$-tight degree of escape; he also shows that $n^{\alpha'}\left(\log\log n\right)^{1-\alpha'}$ is an upper scaling function for $G\wr\Z$, whereas $n^{\alpha'}/\left(\log\log n\right)^{1-\alpha'}$ is a lower scaling function for this group. Revelle studies in addition several Baumstag-Soliter groups, proving they have an inner radius of order $\sqrt{n/\log\log n}$ and an outer radius of order $\sqrt{n\log\log n}$.

Finally, in \cite{Th13} Thompson proves Laws of Iterated Logarithm for certain polycyclic and metaabelian groups. He shows that these groups are all diffusive, have $\frac{1}{2}$-tight degree of escape (which is a form of control over the tail of the distance function), and have an upper scaling function $g(n)=\sqrt{n\log\log n}$.

\subsection{Main results}
In this article, we consider the groups constructed in \cite{brzh}. These groups are diagonal products of lamplighter groups with finite lamp groups. We focus on the case where the lamp groups are expanders, and prove a Law of Iterated Logarithm on these groups. Our main theorem is the following:

\begin{thm}\label{thm:main-thm}
Let $f\colon[1,\infty)\to[1,\infty)$ be a continuous function such that $f(1)=1$ and such that $\frac{x}{f(x)}$ and $\frac{f(x)}{\sqrt{x}}$ are non-decreasing. Let $\Delta$ be the group from \cite{brzh} (in the expander case) for which the speed function is equivalent to $f(n)$. Write $W_n$ for the random walk on $\Delta$ with the appropriate generators.
\begin{enumerate}
    \item \label{eq:limsup main} If $\frac{f(x)}{\sqrt{x}\log\log\log x}$ is non-decreasing, then
    $$0<\limsup_{n\to\infty}\frac{|W_n|}{\log\log n\, f\left(\frac{n}{\log\log n}\right)}<\infty$$
    almost surely.
    \item \label{eq:liminf main} If $\frac{f(x)}{\sqrt{x}(\log\log x)^{1+\eps}}$ is non-decreasing for some $\eps>0$, then
    $$0<\liminf_{n\to\infty}\frac{|W_n|}{\frac{1}{\log\log n}f\left(n\log\log n\right)}<\infty$$
    almost surely.
\end{enumerate}
\end{thm}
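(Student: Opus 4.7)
The plan is to follow an excursion-based strategy, exploiting the explicit description of $\Delta$ from \cite{brzh} as a diagonal product of lamplighter groups $\Z \wr A_i$ in which the lamp groups $A_i$ are expanders of sizes $l_i$ growing along a prescribed sequence. The projection of $W_n$ to the base copy of $\Z$ is essentially a simple random walk $(Z_n)$. Up to multiplicative constants, $|W_n|$ decomposes as $|Z_n|$ plus a sum over levels $i$ of quantities of the form $l_i\cdot B_i(n)$, where $B_i(n)$ is the number of length-$l_i$ blocks of $\Z$ visited by $(Z_n)$ in which the lamp configuration has been modified; the expander property of $A_i$ is exactly what guarantees that the per-block cost is of order $l_i$ regardless of the specific pattern of lamp values.

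My first step is to establish two-sided moderate deviation bounds of the form $\Pr(|W_n|\ge t\,f(n))\le\psi_+(t,n)$ and $\Pr(|W_n|\le f(n)/t)\le\psi_-(t,n)$, with $\psi_\pm$ decaying fast enough in~$t$ for Borel--Cantelli along exponential subsequences. These reduce to questions about $(Z_n)$ alone, namely how its range and its excursion profile interact with the critical scale $l_{i(n)}$ that dominates $\E|W_n|$ at time $n$. The monotonicity hypotheses on $f(x)/\sqrt{x}$ and $x/f(x)$ are what make this critical level unique and well-behaved, and the stronger conditions that $f(x)/(\sqrt{x}\log\log\log x)$ be non-decreasing in part~\eqref{eq:limsup main} and $f(x)/(\sqrt{x}(\log\log x)^{1+\eps})$ be non-decreasing in part~\eqref{eq:liminf main} are the precise slacks required to compensate for the $\log\log$ factors that appear in the LIL scalings.

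Granted the tail bounds, the two halves of each part follow the classical LIL template. Along a geometric subsequence $n_k=\ceil{\theta^k}$, one direction in each of \eqref{eq:limsup main} and \eqref{eq:liminf main} uses direct Borel--Cantelli on the relevant tail at scale $g(n_k):=\log\log n_k\cdot f(n_k/\log\log n_k)$, respectively $h(n_k):=f(n_k\log\log n_k)/\log\log n_k$, interpolated to arbitrary $n\in[n_k,n_{k+1}]$ by a short-time modulus-of-continuity estimate on $|W_n|$ (here the monotonicity of $x/f(x)$ controls the fluctuation between consecutive subsequence times). The converse direction uses approximate independence: I would construct disjoint time blocks of length comparable to $n_k$ on which the walker, conditionally on the past, has probability bounded away from zero either to achieve distance $\gtrsim g(n_k)$ (for part \eqref{eq:limsup main}) or to have distance $\lesssim h(n_k)$ (for part \eqref{eq:liminf main}); a converse Borel--Cantelli argument across independent blocks then furnishes the required lower bound on $\limsup$ or upper bound on $\liminf$.

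The main obstacle I anticipate is obtaining the moderate deviation bounds with sharp enough exponential decay in the ``critical-scale'' regime, where the range of $(Z_n)$ is comparable to $l_{i(n)}$ and contributions from neighbouring levels are of the same order. In this regime $|W_n|$ is a non-trivial sum of approximately independent but differently distributed pieces, and controlling both tails simultaneously requires a careful excursion-level analysis: classical LIL estimates for the excursion counts and lengths of simple random walk on $\Z$ must be combined with the expander geometry on each $A_i$ via a union bound over excursions together with a concentration estimate for the number of length-$l_i$ blocks entered. Matching the $\log\log$ factors on both sides of the scaling functions will come out of tracking these estimates uniformly across the range of levels that contribute at time $n$.
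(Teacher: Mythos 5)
Your high-level instinct --- reduce $|W_n|$ to the range and excursion structure of the base walk $Z_n$ on $\Z$, and recognize the extra monotonicity hypotheses as exactly the slack needed to absorb the iterated-logarithm factors --- is correct and is indeed the engine of the paper's proof. But your execution via moderate-deviation bounds plus a temporal block/independence argument diverges from the paper's route and has two genuine gaps. First, you claim that the increments achieve distance $\gtrsim g(n_k)$ ``with probability bounded away from zero.'' This is false: $g(n)=\log\log n\cdot f(n/\log\log n)$ exceeds $\E|W_n|\simeq f(n)$ by a $\log\log$-type factor, so the relevant tail probability necessarily tends to zero, roughly as a power of $\log n$. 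Whether a converse Borel--Cantelli argument succeeds then hinges on getting that exponent precisely right (divergent sum for one constant, convergent for a slightly larger one); your proposal glosses over exactly the concentration estimates in which that exponent is won or lost. Second, you speak of ``the critical scale $l_{i(n)}$ that dominates $\E|W_n|$'' as if a single layer governs all three regimes. In $\Delta$ the dominant layer shifts: the paper identifies $s_1(n)$ for the typical speed, a smaller index $s_2(n)$ for the $\limsup$ (reached when the range of $Z_n$ is atypically large, of order $\sqrt{n\log\log n}$), and a larger index $\tilde{s}_3(n)$ for the $\liminf$ (range atypically small, of order $\sqrt{n/\log\log n}$). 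Missing this shift gives the wrong $\log\log$ exponents in $g$ and $h$.

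The paper avoids the temporal block construction altogether, and this is the main structural difference worth internalizing. Conditionally on the trajectory of the base walk, the lamp configurations at distinct sites are independent, and the paper proves (Proposition~\ref{prop:metric-lower-bound}) that the lamp contribution concentrates around its conditional mean $\sum_x\min\{T(k_s,x,n),l_s\}$ almost surely for all large $n$ simultaneously. Combined with the upper bound inherited from \cite{brzh}, this shows that $|W_n|_\Delta$ is, for all large $n$ almost surely, sandwiched between explicit deterministic functionals of $Z_n$'s range and $k_s$-excursion counts. The LIL for $|W_n|_\Delta$ then follows directly from the classical LIL for the range of simple random walk on $\Z$ --- the independence needed for the converse Borel--Cantelli half is already baked into that classical result and never needs to be reconstructed across time blocks for the group random walk. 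One further imprecision in your decomposition: the spatial scale governing excursions in layer $s$ is $k_s$ (the shift in the $\beta$-generators), while $l_s$ is the lamp diameter that truncates the per-site contribution; your ``length-$l_i$ blocks'' conflates the two, whereas the controlling quantity is $\sum_x\min\{T(k_s,x,n),l_s\}$.
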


\begin{exmpl}
Let $\frac{1}{2}<\alpha<1$, and suppose that the speed function we choose is $f(n)=n^\alpha$. Then the theorem states that
$$\Pr\left(0<\limsup_{n\to\infty}\frac{\left|W_n\right|}{n^\alpha (\log\log n)^{1-\alpha}}<\infty\right)=1$$
and
$$\Pr\left(0<\liminf_{n\to\infty}\frac{\left|W_n\right|(\log\log n)^{1-\alpha}}{n^\alpha}<\infty\right)=1.$$
\end{exmpl}

\begin{rem}
One could try to construct examples of groups satisfying a Law of Iterated Logarithm as in the example above by considering the wreath products $G \wr \Z$ where $G$ is taken to be a group with speed $n^\beta$, and then using Revelle's results \cite{rev}. However, there are two caveats to this approach. First, the wreath product  $G \wr \Z$ will have a rate of escape $n^{(\beta+1)/2}$, thus it could only provide examples with $\alpha\geq \frac{3}{4}$.   Second, in order to apply Revelle's theorems, one must know that the random walk on $G$ satisfies a \emph{tight degree of escape} which amounts to proving tail bounds on the distance of the random walk on $G$, which requires the same kind of analysis done in our paper.
\end{rem}

\begin{rem}
Throughout this article, we did not optimize the constants. Any unnumbered constant is some universal constant, but its value may change between claims. However, numbered constants keep their values for the rest of the paper.
\end{rem}

\section{Realizing Speed Functions with Diagonal Products}\label{sec:group-desc}

In this section we describe the groups from \cite{brzh}, and give an outline of the technique used in this article to estimate the speed of the random walk on these groups, proving \Tref{thm:brzh-main-thm}. These groups are diagonal products of a sequence of lamplighter groups with finite lamp groups, where the lamp groups are chosen to be expanders or diffusive groups. In this article we focus on the expander case, although we first give the general description for the groups.

\subsection{Diagonal product}

\begin{defn}
Let $X=\set{x_1,\dots,x_{|X|}}$ be a set, and let $\set{\Delta_s}_{s\ge 0}$ be a sequence of groups. Suppose that each $\Delta_s$ is generated by a set $X(s)$ which we identify as a copy of $X$ in $\Delta_s$, i.e.\ $X(s)=\set{x_1(s),\dots,x_{|X|}(s)}$. The \textbf{diagonal product of $\set{\Delta_s}_{s\ge 0}$ with respect to $\set{X(s)}_{s\ge 0}$}, which will be denoted $\Delta$, is the subgroup of the direct product $\prod_{s\ge 0}\Delta_s$ generated by the diagonal elements $(x_i(s))_{s\ge 0}$.

Alternatively, we can construct $\Delta$ as follows: let $F(X)$ be the free group generated by $X$, and let $\pi_s\colon F(X)\to\Delta_s$ denote the natural projections. Then $\Delta=F(X)/\bigcap_{s\ge 0}\ker\pi_s$.
\end{defn}

Note that $\Delta$ has a natural generating set, given by the diagonal elements.

\begin{rem}
If $X$ has a natural algebraic structure, for example a union of several groups, it is natural to require that the above construction will be compatible with that structure. This can be done by requiring that the identifications of $X(s)$ with $X$ respect that structure.
\end{rem}

\subsection{The lamp groups}

Let $A=\set{a_1,\dots,a_{|A|}}$ and $B=\set{b_1,\dots,b_{|B|}}$ be two groups. Let $\set{\Gamma_s}_{s\ge 0}$ be a sequence of groups, where each $\Gamma_s$ is generated by a set of the form $A(s)\cup B(s)$, where $A(s)$ and $B(s)$ are subgroups of $\Gamma_s$ isomorphic to $A$ and $B$ respectively.

Fix a sequence of strictly increasing integers $\set{k_s}$. For each $s$ let $\Delta_s=\Gamma_s\wr\Z$, with a generating set given by $\tau(s)=(e,+1)$, $\alpha_i(s)=(a_i(s)\delta_0,0)$ for all $1\leq i\leq |A|$, and $\beta_j(s)=(b_j\delta_{k_s},0)$ for all $1\leq j\leq |B|$.

Finally, we take $\Delta$ to be the diagonal product of the groups $\Delta_s$ with respect to the above generating sets, marked with the generating set $\mathcal{T}=(\tau,\alpha_1,\dots,\alpha_{|A|},\beta_1,\dots,\beta_{|B|})$. If $U_\alpha$ and $U_\beta$ are the uniform measures on the subgroups $\mathcal{A}=\set{\alpha_1,\dots,\alpha_{|A|}}$ and $\mathcal{B}=\set{\beta_1,\dots,\beta_{|B|}}$, and $\mu$ is the uniform measure on $\set{\tau,\tau^{-1}}$, we use the ``switch-walk-switch'' measure for our random walk on $\Delta$, that is
$$q=(U_\alpha*U_\beta)*\mu*(U_\alpha*U_\beta).$$

Denote by $W_n$ the random walk on $\Delta$ with step distribution $q$. By the choice of $q$, we may write $W_n=(f_s^{W_n},S_n)$, where $S_n$ is a simple random walk on $\Z$, and $f_s^{W_n}$ denotes the lamp configuration at the layer $\Delta_s$ at time $n$.

Following \cite{brzh}, we make the following assumptions on the groups $\Gamma_s$:
\begin{assum}~
\begin{enumerate}
    \item $k_0=0$ and $\Gamma_0=A(0)\times B(0)\cong A\times B$;
    \item Let $[A(s),B(s)]^{\Gamma_s}$ denote the normal closure of the subgroup generated by commutators $[a_i(s),b_j(s)]$. Then we assume
    $$\Gamma_s/[A(s),B(s)]^{\Gamma_s}\cong A(s)\times B(s)\cong A\times B.$$
    \item Letting $l_s=\diam(\Gamma_s)$, we assume that $k_s$ and $l_s$ grow at least exponentially.
\end{enumerate}
\end{assum}

In \cite{brzh} the authors treat two cases of families of groups $\Gamma_s$: one of them is when the random walks on $\Gamma_s$  are diffusive, and the other one is when the groups $\set{\Gamma_s}$ satisfy the following linear speed assumption:
\begin{defn}[{\cite[Definition 3.1]{brzh}}]
Let $\set{\Gamma_s}$ be a sequence of finite groups where each $\Gamma_s$ is marked with a generating set $A(s)\cup B(s)$. Let $\eta_s=U_{A(s)}*U_{B(s)}*U_{A(s)}$ where $U_{A(s)},U_{B(s)}$ are uniform distribution on $A(s),B(s)$. We say $\set{\Gamma_s}$ satisfies the \textbf{$(\sigma,T_{s})$-linear speed assumption} if in each $\Gamma_s$,
$$L_{\eta_s}(t)=\E\left|X_t^{(s)}\right|_{\Gamma_s}\ge\sigma t\ \mbox{for all }t\leq T_s$$
where $X_t^{(s)}$ has distribution $\eta_s^{*t}.$

Note that for the definition to be meaningful, we must have $\sigma\leq 1$.
\end{defn}

In this paper, we focus only on the case where the groups satisfy the linear speed assumption. Such groups can be constructed for instance using Lafforgue super expanders or with lamplighter groups over a $d$-dimensional infinite dihedral group, see \cite[Examples 3.2 and 3.3]{brzh}.

\subsection{Excursions and the speed in a single layer}\label{subs:speed-single-layer}

In a single layer $\Delta_s=\Gamma_s\wr\Z$, the lamp value at $x\in\Z$ can only be changed when the random walk reaches $x$ or $x-k_s$. As $A(s)$ is a subgroup and the measure chosen on it is uniform, a product of random elements from $A(s)$ is again a uniform element from~$A(s)$ (and the same holds for~$B(s)$). Therefore, conditioning on the trajectory of the simple random walk~$S_t$, the distribution of the lamp value at $x\in\Z$ depends only on the number of $k_s$-excursions:
\begin{defn}
Let $S_t$ be a random walk on $\Z$, and let $k>0$. We say that the random walk begins a \textbf{$k$-excursion} in time $j$, if it visits $S_j-k$ before its next visit in $S_j$. We denote by $T(k,x,n)$ the number of $k$-excursions the random walk $S_t$ performs before time $n$ starting at~$x$.
\end{defn}

To estimate the speed of the random walk on $\Delta_s$, Brieussel and Zheng give bounds on the speed in terms of the number of $k_s$-excursions and the diameter $l_s=\diam(\Gamma_s)$. By dividing the range into intervals of length $k_s$ and working in each one separately one after the other, one can show that for some universal constant $C>0$,
\begin{equation}\label{eq:brzh-eq1}
    \left|W_n\right|_{\Delta_s}\leq\frac{Ck_s}{2}\sum_{j\in\Z}T\left(\frac{k_s}{2},\frac{k_s}{2}j,n\right)+C\left|\range(S_n)\right|,
\end{equation}
which has an expected value of $\frac{C'n}{k_s}+C'\sqrt{n}\lesssim\frac{n}{k_s}$ if $k_s\leq\sqrt{n}$. Also, since $l_s=\diam(\Gamma_s)$, we also have the bound
$$\left|W_n\right|_{\Delta_s}\leq\left|\range(S_n)\right|l_s,$$
which has an expected value $\lesssim\sqrt{n}\,l_s$. Together we get
$$\E\left|W_n\right|_{\Delta_s}\lesssim\sqrt{n}\,l_s+\frac{n}{k_s}.$$
Brieussel and Zheng show that this also gives a matching lower bound because of the linear speed assumption, so one gets the asymptotic behavior of the speed of the random walk on $\Delta_s$.

\subsection{Estimating the speed of the random walk}

The random walk on $\Delta$ can be thought of as parallel random walks on each $\Delta_s$, so in order to estimate the speed in $\Delta$ one needs to use the estimates for the speed in each layer $\Delta_s$. In \cite{brzh}, Brieussel and Zheng show that there is some universal constant $C>0$ such that
\begin{equation}\label{eq:brzh-eq0}
    \left|W_n\right|_{\Delta_s}\leq\left|W_n\right|_{\Delta}\leq C\sum_{s:k_s\leq\left|\range(S_n)\right|}\left|W_n\right|_{\Delta_s}.
\end{equation}
As the speed in each layer $\Delta_s$ is of order $\sqrt{n}\,l_s+\frac{n}{k_s}$ and the sequences $\set{k_s}$ and $l_s$ grow at least exponentially, it follows that for each $n$ there is a layer $s_1(n)=\max\set{s\suchthat k_sl_s\leq\sqrt{n}}$ that determines the speed of the random walk, i.e.\ $\E[|W_n|_{\Delta}]$ is equivalent to $\E[|W_n|_{\Delta_{s_1(n)}}]$ up to universal constants. This proves the following:
\begin{prop}[{\cite[Proposition 3.6]{brzh}}]
$$\E[|W_n|_{\Delta}]\simeq \E[|W_n|_{\Delta_{s_1(n)}}]\simeq \sqrt{n}\, l_{s_1(n)}+\frac{n}{k_{s_1(n)+1}}.$$
\end{prop}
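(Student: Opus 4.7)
The plan is to establish matching two-sided bounds on $\E|W_n|_\Delta$ in terms of the explicit quantity $\sqrt{n}\,l_{s_1(n)}+\frac{n}{k_{s_1(n)+1}}$, and then derive the middle equivalence $\E|W_n|_\Delta\simeq\E|W_n|_{\Delta_{s_1(n)}}$ by combining with the per-layer estimate from \Ssref{subs:speed-single-layer}.

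For the upper bound I would start from the right-hand inequality of \eqref{eq:brzh-eq0} and take expectations to get
$$\E|W_n|_\Delta\leq C\sum_{s}\E\left[|W_n|_{\Delta_s}\,\ind{k_s\leq|\range(S_n)|}\right].$$
In each layer I would use whichever of the two available estimates is sharper. The diameter bound $|W_n|_{\Delta_s}\leq|\range(S_n)|\,l_s$ has expected value of order $\sqrt{n}\,l_s$, while the excursion bound \eqref{eq:brzh-eq1} has expected value of order $n/k_s+\sqrt{n}$. By the definition of $s_1(n)$, the diameter bound is the sharper one exactly when $s\leq s_1(n)$, and the excursion bound is sharper when $s>s_1(n)$. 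Since $\set{k_s}$ and $\set{l_s}$ grow at least exponentially, both resulting partial sums telescope to their extremal term:
$$\sum_{s\leq s_1(n)}\sqrt{n}\,l_s\lesssim\sqrt{n}\,l_{s_1(n)},\qquad \sum_{s>s_1(n)}\frac{n}{k_s}\lesssim\frac{n}{k_{s_1(n)+1}}.$$
Adding these yields $\E|W_n|_\Delta\lesssim\sqrt{n}\,l_{s_1(n)}+\frac{n}{k_{s_1(n)+1}}$.

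For the lower bound, the left-hand inequality of \eqref{eq:brzh-eq0} gives $|W_n|_\Delta\geq|W_n|_{\Delta_s}$ for every $s$. Specializing to $s=s_1(n)$ and invoking the per-layer estimate $\E|W_n|_{\Delta_{s_1(n)}}\simeq\sqrt{n}\,l_{s_1(n)}+\frac{n}{k_{s_1(n)}}$ -- whose lower direction is exactly where the linear speed assumption on the lamp groups $\Gamma_s$ is used -- yields
$$\E|W_n|_\Delta\gtrsim\sqrt{n}\,l_{s_1(n)}+\frac{n}{k_{s_1(n)}}\geq\sqrt{n}\,l_{s_1(n)}+\frac{n}{k_{s_1(n)+1}}.$$
Combined with the upper bound this gives $\E|W_n|_\Delta\simeq\sqrt{n}\,l_{s_1(n)}+\frac{n}{k_{s_1(n)+1}}$. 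The middle equivalence $\E|W_n|_\Delta\simeq\E|W_n|_{\Delta_{s_1(n)}}$ then follows, because both sides are pinched between $\sqrt{n}\,l_{s_1(n)}+\frac{n}{k_{s_1(n)+1}}$ and $\sqrt{n}\,l_{s_1(n)}+\frac{n}{k_{s_1(n)}}$, and these two expressions differ by only a bounded multiplicative factor under the regularity implicit in the construction of the sequences $\set{k_s},\set{l_s}$.

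The main technical point is the bookkeeping around which per-layer bound applies in which regime, plus the verification that the at-least-exponential growth of $\set{k_s},\set{l_s}$ really is fast enough for both geometric sums to collapse to their boundary terms. A secondary concern is the random range $|\range(S_n)|$ appearing in the summation index of \eqref{eq:brzh-eq0}: this is handled by splitting according to whether $|\range(S_n)|$ is comparable to its typical size $\sqrt{n}$, with the atypical event absorbed using standard concentration estimates for the range of simple random walk on $\Z$.
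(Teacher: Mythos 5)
Your upper-bound argument is sound and matches the ingredients the paper itemizes: split the sum in \eqref{eq:brzh-eq0} at $s_1(n)$, use the diameter bound $\left|\range(S_n)\right|l_s$ for $s\leq s_1(n)$ and the excursion bound from \eqref{eq:brzh-eq1} for $s>s_1(n)$, then collapse both geometric series using the (at least) exponential growth of $\set{k_s}$ and $\set{l_s}$.

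The lower bound, however, has a genuine gap. You invoke a per-layer estimate $\E|W_n|_{\Delta_{s_1(n)}}\simeq\sqrt{n}\,l_{s_1(n)}+\frac{n}{k_{s_1(n)}}$, with the \emph{sum} as a two-sided estimate for a single layer. This cannot hold: the diameter bound $\left|W_n\right|_{\Delta_s}\leq\left|\range(S_n)\right|l_s$ already forces $\E|W_n|_{\Delta_s}\lesssim\sqrt{n}\,l_s$. At $s=s_1(n)$ the defining inequality $k_{s_1(n)}l_{s_1(n)}\leq\sqrt{n}$ makes $\frac{n}{k_{s_1(n)}}\geq\sqrt{n}\,l_{s_1(n)}$, so the quantity you claim as a lower bound can vastly exceed what the diameter bound allows. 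The honest single-layer estimate is $\E|W_n|_{\Delta_s}\simeq\min\set{\sqrt{n}\,l_s,\frac{n}{k_s}}$ (up to the $\sqrt{n}$ floor), which at $s_1(n)$ gives only $\gtrsim\sqrt{n}\,l_{s_1(n)}$. Restricting to $s=s_1(n)$ therefore misses the $\frac{n}{k_{s_1(n)+1}}$ term entirely. The fix, and the approach the paper takes in the analogous $\limsup$/$\liminf$ lower bounds (see \Pref{prop:limsup-s-expr} and \Pref{prop:liminf-s-expr}), is to use \emph{two} layers: $|W_n|_\Delta\geq\max\set{|W_n|_{\Delta_{s_1(n)}},|W_n|_{\Delta_{s_1(n)+1}}}\geq\frac{1}{2}\left(|W_n|_{\Delta_{s_1(n)}}+|W_n|_{\Delta_{s_1(n)+1}}\right)$, where layer $s_1(n)$ supplies $\sqrt{n}\,l_{s_1(n)}$ and layer $s_1(n)+1$ supplies $\frac{n}{k_{s_1(n)+1}}$ (the latter only when $k_{s_1(n)+1}\lesssim\sqrt{n}$, which is exactly the regime in which that term matters). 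Finally, your justification of the middle equivalence rests on the claim that $\frac{n}{k_{s_1(n)}}$ and $\frac{n}{k_{s_1(n)+1}}$ differ by a bounded factor; but the construction only imposes $k_{s+1}\geq m_0k_s$, a lower bound on the ratio, so $k_{s_1(n)+1}/k_{s_1(n)}$ is not bounded above and this argument does not go through as written.
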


This proposition allows one to realize various functions as the speed functions of such groups. To do so, given a function $f\colon\N\to\N$ one must choose appropriate sequences $\set{k_s}$ and $\set{l_s}$. This is done in the following way:

\begin{prop}[{\cite[Corollary B.3]{brzh}}]\label{prop:approx-funcs}
Let $f\colon\N\to\N$ be a function such that $\frac{f(n)}{\sqrt{n}}$ and $\frac{n}{f(n)}$ are non-decreasing, and let $m_0>1$. Then one can find sequences $\set{k_s}$ and $\set{l_s}$ of positive integers such that $k_{s+1}\ge m_0k_s$, $l_{s+1}\ge m_0l_s$, and also such that
$$f(n)\simeq\bar{f}(n)$$
where
$$\bar{f}(x)=\sqrt{x}\, l_s+\frac{x}{k_{s+1}}\;\textrm{if }(k_sl_s)^2\leq x<(k_{s+1}l_{s+1})^2.$$
\end{prop}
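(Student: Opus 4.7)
The plan is to construct the transition points $n_s=(k_s l_s)^2$ by tracking the two ``shape functions'' $\phi(x):=f(x)/\sqrt{x}$ and $\psi(x):=x/f(x)$, which by hypothesis are non-decreasing and satisfy $\phi(x)\psi(x)=\sqrt{x}$. The natural ansatz is $l_s:=\lceil\phi(n_s)\rceil$ and $k_s:=\lceil\psi(n_s)\rceil$, giving $k_s l_s\simeq\sqrt{n_s}$, so $(k_s l_s)^2\simeq n_s$, and $k_s l_s^2\simeq f(n_s)$. Starting from $n_0=1$ (so $l_0=k_0=1$, consistent with $f(1)=1$), I would define $n_{s+1}$ inductively as the smallest integer with both $\phi(n_{s+1})\ge m_0 l_s$ and $\psi(n_{s+1})\ge m_0 k_s$. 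Such an integer exists in the non-degenerate regime where both $\phi,\psi\to\infty$ (i.e.\ $f$ is strictly between $\sqrt{n}$ and $n$), and this definition immediately yields $l_{s+1}\ge m_0 l_s$ and $k_{s+1}\ge m_0 k_s$.

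At each transition point, $\bar f((k_s l_s)^2)=k_s l_s^2+(k_s l_s)^2/k_{s+1}\simeq k_s l_s^2\simeq f(n_s)$, since the second term is smaller by a factor of at least $m_0$; so the ``endpoint'' equivalence $\bar f\simeq f$ holds automatically. To promote this to the whole interval $I_s:=\left[(k_s l_s)^2,(k_{s+1}l_{s+1})^2\right)$, I would use the monotonicity of $\phi,\psi$ on $I_s$. For the lower bound, $\phi(x)\ge l_s$ and $\psi(x)\le k_{s+1}$ (up to the ceilings) give $f(x)\gtrsim\sqrt{x}\,l_s$ and $f(x)\ge x/k_{s+1}$, whence $2f(x)\gtrsim\bar f(x)$. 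For the upper bound, $\phi(x)\le l_{s+1}$ and $\psi(x)\ge k_s$ give $f(x)\le\sqrt{x}\,l_{s+1}$ and $f(x)\le x/k_s$; dividing these by the trivial bounds $\bar f(x)\ge\sqrt{x}\,l_s$ and $\bar f(x)\ge x/k_{s+1}$ yields
$$\frac{f(x)}{\bar f(x)}\le\min\!\left(\frac{l_{s+1}}{l_s},\,\frac{k_{s+1}}{k_s}\right).$$

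The main obstacle is precisely this last bound: a priori either ratio $l_{s+1}/l_s$ or $k_{s+1}/k_s$ could be arbitrarily large when $\phi$ or $\psi$ has a long plateau before finally jumping, so neither ratio individually is controlled. Here I would invoke the \emph{minimality} of $n_{s+1}$: by construction, at least one of the two constraints $\phi\ge m_0 l_s$ or $\psi\ge m_0 k_s$ is the binding one and is saturated at $n_{s+1}$ up to a unit ceiling jump, so the corresponding ratio is $O(m_0)$. Hence $\min(l_{s+1}/l_s,k_{s+1}/k_s)\lesssim m_0$, completing the upper bound and thus the equivalence $\bar f\simeq f$. The degenerate cases where $\phi$ or $\psi$ is bounded (i.e.\ $f\simeq\sqrt{n}$ or $f\simeq n$) lie outside the scope of the proposition and admit separate, easier treatment.
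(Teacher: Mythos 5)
Your construction is essentially the one used in \cite{brzh} (and sketched, after the change of variables $y=\sqrt{x}$, $g(y)=f(y^2)/y$, in Appendix~C of this paper): advance the transition point to the first time both shape functions have grown by a factor $m_0$, and use minimality to control one of the two increment ratios. The endpoint estimate $k_s l_s^2\simeq f(n_s)$, the monotonicity argument on each block, and the observation that one of $l_{s+1}/l_s$, $k_{s+1}/k_s$ is pinned near $m_0$ are all the right ingredients. The one step that needs more than you wrote is the assertion that the binding constraint is ``saturated at $n_{s+1}$ up to a unit ceiling jump.'' Since $f$ is $\N$-valued, $\phi$ and $\psi$ are only defined on integers and can jump; without a further argument, $\phi(n_{s+1})$ could overshoot $m_0 l_s$ by an arbitrary multiplicative factor even though $\phi(n_{s+1}-1)<m_0 l_s$, which would destroy the bound $\min(l_{s+1}/l_s,\,k_{s+1}/k_s)\lesssim m_0$. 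What saves the argument --- and what you should spell out --- is that the two monotonicity hypotheses together force bounded multiplicative increments: $n/f(n)$ non-decreasing gives $f(n)/f(n-1)\le n/(n-1)$, hence $\phi(n)/\phi(n-1)\le\sqrt{n/(n-1)}\le\sqrt 2$, and symmetrically $f(n)/\sqrt n$ non-decreasing gives $\psi(n)/\psi(n-1)\le\sqrt{n/(n-1)}$. With that lemma inserted, the overshoot is at most $\sqrt 2$ (plus a harmless $+1$ from the ceiling), your ratio bound holds, and the proof closes. The remaining differences from \cite{brzh} are cosmetic: they maintain the exact invariant $g(k_s l_s)=l_s$ and force one of the two ratios to equal $m_0$ exactly at each step, whereas you round with ceilings and get $O(m_0)$; and the degenerate regime you defer is treated there by allowing the sequences to terminate, with one of $k_s,l_s$ set to infinity after finitely many steps.
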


This concludes the proof of \Tref{thm:brzh-main-thm}.

\section{Sketch of the proof}

As mentioned in the introduction, in order to show that
$$0<\limsup_{n\to\infty}\frac{|W_n|_{\Delta}}{a_n}<\infty$$ almost surely, we need to prove that there are constants $C,c>0$ such that that $Ca_n$ is an outer radius but $ca_n$ is not an outer radius. For the analogous claim
$$0<\liminf_{n\to\infty}\frac{|W_n|_{\Delta}}{b_n}<\infty$$ we need to prove that there are constants $C',c'>0$ such that $c'b_n$ is an inner radius for $|W_n|_{\Delta}$, but $C'b_n$ is not an inner radius.

Using a similar strategy as in \cite{brzh}, we first study the distance of the random walk on a single layer $\Delta_s$, and then deduce a bound for the distance of the random walk on $\Delta$. For a single layer, Brieussel and Zheng provide an upper bound for the distance of the random walk in terms of $k$-excursions \eqref{eq:brzh-eq1}, and a lower bound on the speed of the random walk
\begin{equation}\label{eq:brzh-eq2}
    \E\left[|W_n|_{\Delta_s}\right]\ge c\,\E\left[\sum_{x\in\Z}\min\set{T(k_s,x,n),l_s}\right].
\end{equation}

To estimate the fluctuations of the distance of the random walk, we use the same upper bound \eqref{eq:brzh-eq1}, and prove that \eqref{eq:brzh-eq2} holds also for the distance and not only for the expectation, i.e.\ for large enough~$n$
$$|W_n|_{\Delta_s}\ge c\sum_{x\in\Z}\min\set{T(k_s,x,n),l_s}.$$
As such, we study the tail behavior of $T(k,n)=k\sum_{x\in\Z}T(k,kx,n)$ and of $\sum_{x\in\Z}\min\set{T(k,x,n),l}$ for all $k,l$. This gives a bound on the distance in a single layer in terms of $k_s,l_s$, which we then add up by~\eqref{eq:brzh-eq0} to bound the distance of the random walk on $\Delta$.

Because of the Law of Iterated Logarithm on $\Z$, we know that there are times in which the range is small or large compared to its expected value. We show that in these times, the distance reaches its $\liminf$ and $\limsup$ values, respectively. As in the case of the speed of the random walk, we prove that there is one layer $\Delta_s$ that is equivalent to the distance in $\Delta$; however, since we consider the times in which the range is extremal, we need to choose different copies rather than $s_1(n)$.

The article is constructed as follows. In \Sref{sec:k-rw} we associate another random walk to $S_t$, so that the numbers of $k$-excursions of $S_t$ at all points correspond to the local times of the new random walk. To do so we use the results in \Aref{app:random-walk-facts}, which study the induced random walk of $S_t$ on $k\Z$. In \Sref{sec:total-ex} we study the tail behavior of $T(k,n)$, showing that under appropriate assumptions it is highly concentrated around its mean $\frac{n}{2k}$. In \Srefs{sec:max-ex}{sec:chop-ex} we study the tail behavior of $\sum_{x\in\Z}\min\set{T(k,x,n),l}$, for the two critical layers. These results rely on the tail behavior of the maximal local time of a random walk, which we study in \Aref{app:max-local-time-brow}.

We then return to the random walk on $\Delta$. In \Sref{sec:dist-bounds}, we prove the lower bound for the distance of the random walk in terms of the $k$-excursions. \Sref{sec:bounds-sing-layer} is devoted to deduce the bounds for the distance in a single layer, and in \Sref{sec:bounds-total} we use the bounds for a single layer to bound the distance of the random walk on $\Delta$. Finally, in \Sref{sec:main-thm-functions} we express our results in terms of the speed function, and conclude the proof of \Tref{thm:main-thm}.

\begin{rem}
In \Tref{thm:main-thm} we demand that the random walks are slightly super-diffusive. These conditions stem from the fact that the more diffusive the random walk is, the harder it is to get strong concentration inequalities for the number of $k$-excursions.

For the $\limsup$ statement (\Tref{thm:main-thm} \eqref{eq:limsup main}), this manifests in not having a precise enough bound for the contribution of the layers near the $\limsup$ critical layer. It may be that by having a better understanding of the joint behaviour of the distance in those layers one could improve the bounds.

For the $\liminf$ statement (\Tref{thm:main-thm} \eqref{eq:liminf main}), the difficulty arises from the sequence of parameters $k_s,l_s$ used in the approximation of the speed function. When $f$ is almost diffusive these sequences may grow extremely fast. One can still write an explicit formula for the $\liminf$ in terms of $k_s,l_s$ (see \Pref{prop:liminf-s-expr}), however this may not coincide with the expression in the theorem.
\end{rem}

\section{The $k$-induced random walk}\label{sec:k-rw}

Let $S_t$ be a simple random walk on $\Z$. Recall that $T(k,x,n)$ denotes the number of $k$-excursions starting from $x$ that are completed until time $n$. To study the distribution of $T(k,x,n)$ for each $k,x,n$, we associate a new simple random walk on~$\Z$ to $S_t$, so that $k$-excursions in $S_t$ can be approximated by the local times of the new random walk.

\begin{defn}\label{defn:k-assoc}
Let $S_t$ be a simple random walk on $\Z$, and let $k\ge 0$. The \textbf{k-induced random walk of $S_t$} is the simple random walk~$Y_n^{(k)}$ on $\Z$ defined in the following manner. We first set $n^{(k)}_0=0$, and for any $j\ge 1$ define inductively
$$n^{(k)}_j=\min\set{t>n^{(k)}_{j-1}\suchthat \left|S_t-S_{n^{(k)}_{j-1}}\right|=k}.$$
The random walk $Y_t^{(k)}$ is then given by $Y_t^{(k)}=S_{n^{(k)}_t}/k$. We also write $N_k(n)=\max\set{j\suchthat n^{(k)}_j\leq n}$ for the number of steps $Y_n^{(k)}$ made until time $n$.
\end{defn}

It is easy to see that $Y_n^{(k)}$ is indeed a simple random walk on $\Z$. In addition, as the expected time to reach $\pm k$ starting from $0$ is $k^2$, we have $\E[N_k(n)]=\frac{n}{k^2}$.

\begin{prop}\label{prop:nsteps}
There are universal constants $d_1,c,C>0$ such that for all $n$ and for all $k\leq\frac{d_1\sqrt{n}}{\sqrt{\log\log n}}$,
$$\Pr\left(N_k(n)\notin\left[\frac{cn}{k^2},\frac{Cn}{k^2}\right]\right)\leq\frac{2}{\log^3 n}.$$
\end{prop}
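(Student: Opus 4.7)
The plan is to reduce the statement to a standard Bernstein-type concentration for a sum of i.i.d.\ random variables with sub-exponential tails. First I would set $\tau_j = n_j^{(k)} - n_{j-1}^{(k)}$ and observe, via the strong Markov property applied at the stopping times $n_j^{(k)}$, that $\set{\tau_j}$ are i.i.d., each distributed as the first hitting time of $\set{-k,k}$ by a simple random walk on $\Z$ started at~$0$. This hitting time has mean $k^2$, and classical estimates (which I expect to be among the groundwork set up in \Aref{app:random-walk-facts}) give it a sub-exponential tail of the form $\Pr(\tau_j > tk^2)\leq Ce^{-ct}$.

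Next I would exploit the basic identity $\set{N_k(n)\ge M} = \set{n_M^{(k)} \leq n}$, which turns a two-sided bound on $N_k(n)$ around its mean $n/k^2$ into two large-deviation statements for the sum $n_M^{(k)} = \sum_{j=1}^M \tau_j$ around its mean $Mk^2$. Setting $M_1=\floor{cn/k^2}$ and $M_2=\ceil{Cn/k^2}$, the event in the proposition is contained in the union of $\set{n_{M_1}^{(k)} > n}$ (a lower-tail deviation) and $\set{n_{M_2}^{(k)} \leq n}$ (an upper-tail deviation). To each I would apply a Bernstein-type inequality for sums of i.i.d.\ sub-exponential variables with mean and sub-exponential scale of order $k^2$; choosing $c$ and $C$ so that the relative deviation is bounded away from~$0$ (for instance $c=\frac12$ and $C=2$), this gives a bound of the form $2\exp(-c'M)$ for some universal $c'>0$, and hence, since $M\simeq n/k^2$, a bound of the form $2\exp(-c''n/k^2)$.

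Finally, the assumption $k\leq d_1\sqrt{n}/\sqrt{\log\log n}$, with $d_1$ chosen small enough so that $c''/d_1^2\geq 3$, forces $c''n/k^2\geq 3\log\log n$, and so the probability is at most $2/\log^3 n$. The main obstacle, such as it is, lies in securing the correct sub-exponential tail of $\tau_j$ at the scale $k^2$ to feed into Bernstein; everything else is the standard dictionary between a renewal-type counting variable and the cumulative times it counts. Since \Aref{app:random-walk-facts} is dedicated precisely to the induced walk on $k\Z$, I would expect the hitting-time tail to be available there in a form that makes the rest of the argument a clean application of Bernstein's inequality.
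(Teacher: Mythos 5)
Your proposal is correct and follows essentially the same route as the paper: both hinge on the renewal duality $\set{N_k(n)\ge M}=\set{n_M^{(k)}\le n}$ together with concentration for the i.i.d.\ hitting-time increments, and both convert the resulting $\exp(-c''n/k^2)$ bound into $2/\log^3 n$ using the constraint $k\lesssim\sqrt{n/\log\log n}$. The paper's \Aref{app:random-walk-facts} implements the concentration slightly more by hand (a binomial-Chernoff reduction for one tail, stochastic domination of $\tau_j/k^2$ by a geometric for the other) rather than invoking a packaged Bernstein inequality, but this is an implementation detail, not a different argument; note only the small slip that $\set{n_{M_1}^{(k)}>n}$ is an upper-tail deviation of the cumulative time (corresponding to the lower tail of $N_k$), and $\set{n_{M_2}^{(k)}\le n}$ a lower-tail one.
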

\begin{proof}
By choosing appropriate $c,c'>0$, \Aref{app:random-walk-facts} shows that
$$\Pr\left(N_k(n)\notin\left[\frac{cn}{k^2},\frac{Cn}{k^2}\right]\right)\leq 2\exp\left(-\frac{n}{20k^2}\right).$$
Note that if $k\leq\frac{\sqrt{n}}{8\sqrt{\log\log n}}$, we have $\frac{n}{k^2}\ge 64\log\log n$, so
$$\exp\left(-\frac{n}{20k^2}\right)\leq\exp\left(-3\log\log n\right)=\frac{1}{\log^3 n}$$
completing the proof.
\end{proof}

We turn to give bounds on the $k$-excursions of $S_t$ by means of the induced random walks. Denote the local time of $Y_n^{(k)}$ at $x$ by
$$L^{(k)}(x,n)=\left|\set{0\leq t\leq n\suchthat Y_t^{(k)}=x}\right|,$$
and let
$$\ell^{(k)}(x,n)=\left|\set{0\leq t<n\suchthat Y_t^{(k)}=x,Y_{t+1}^{(k)}=x-1}\right|.$$

\begin{prop}\label{prop:ex-bounds}
For any $x\in\Z$,
$$\ell^{(2k)}\left(\ceil{\frac{x}{2k}},n\right)-1\leq T(k,x,n)\leq L^{(k/2)}\left(\floor{\frac{x}{k/2}},n\right).$$
\end{prop}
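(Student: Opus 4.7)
The plan is to prove both inequalities pathwise by relating the $k$-excursions of $S$ from $x$ to combinatorial features of the induced walks at two adjacent scales: $Y^{(k/2)}$ for the upper bound and $Y^{(2k)}$ for the lower bound. Setting $m:=\floor{x/(k/2)}$ and $m':=\ceil{x/(2k)}$, the key geometric fact is that between two consecutive checkpoints $n_j^{(k')},n_{j+1}^{(k')}$ of $Y^{(k')}$, the walk $S$ stays in the open interval of radius $k'$ around the previous checkpoint. This confinement couples the excursion structure of $S$ around $x$ to the trajectory of $Y^{(k')}$ around the appropriate integer level.

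For the upper bound $T(k,x,n)\le L^{(k/2)}(m,n)$, I would show that every $k$-excursion of $S$ from $x$ forces $Y^{(k/2)}$ to visit $m$ at some checkpoint inside the excursion. Since $mk/2\le x<(m+1)k/2$ and $(m-2)k/2\le x-k<(m-1)k/2$, during the excursion the induced walk $Y^{(k/2)}$ must descend from its value in $\set{m,m+1}$ just before the excursion to a value at most $m-1$, and climb back to a value in $\set{m,m+1}$ by the excursion's end. Nearest-neighbor continuity of $Y^{(k/2)}$ then forces the return trip to pass through $m$ at some checkpoint. Because distinct excursions have disjoint time intervals (after passing to half-open intervals to handle the degenerate case $x\in(k/2)\Z$), the corresponding checkpoints are distinct, and the bound follows.

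For the lower bound $\ell^{(2k)}(m',n)-1\le T(k,x,n)$, I would inject each downward transition of $Y^{(2k)}$ at $m'$ into the set of $k$-excursions of $S$ from $x$. Such a transition corresponds to a segment during which $S$ travels from $2km'\ge x$ down to $2k(m'-1)$, so $S$ must cross the level $x$ going downward. Taking $\tau_i$ to be the last visit of $S$ to $x$ during the $i$-th such segment, one verifies that after $\tau_i$ the walk stays strictly below $x$ until its next return $\tau_i''$ to $x$, and that $S$ reaches $x-k$ somewhere in $(\tau_i,\tau_i'')$, so $\tau_i$ is indeed the starting time of a $k$-excursion from $x$. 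Since the $\tau_i$ lie in disjoint segments the resulting excursions are distinct; the $-1$ absorbs the single excursion that may not yet have completed its return to $x$ by time $n$.

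The main obstacle is the boundary bookkeeping, as $x$ is not aligned with either grid in general, and the specific choices of $\floor{\cdot}$ versus $\ceil{\cdot}$ are tuned so that the two grids straddle $x$ and $x-k$ in the right direction. The most delicate point is in the lower bound: if $x$ sits close to the lower boundary of its $(2k)$-cell then $x-k$ lies outside the segment itself, and one must use the continuation of the trajectory of $S$ past the end of the segment to certify that $x-k$ is visited before $\tau_i''$; this, combined with ensuring injectivity even when consecutive excursions share endpoints, is precisely what the tolerance $-1$ is there to absorb.
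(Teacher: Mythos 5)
Your upper bound argument is sound and, in substance, matches the paper's: every $k$-excursion of $S$ from $x$ contains a $(k/2)$-excursion of $S$ from $(k/2)\floor{x/(k/2)}$, which in turn produces a checkpoint visit of $Y^{(k/2)}$ at $\floor{x/(k/2)}$; disjointness of the $k$-excursion time intervals gives injectivity. The paper phrases this via the intermediate inequality $T(k,x,n)\le T(k/2,(k/2)\floor{x/(k/2)},n)\le L^{(k/2)}(\floor{x/(k/2)},n)$, but the geometric content is the same.

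The lower bound argument, however, has a genuine gap exactly at the ``delicate point'' you flag, and the proposed fix does not work. Write $m'=\ceil{x/2k}$. When $2k(m'-1)<x<2km'-k$ (that is, $x$ lies in the open lower half of its $2k$-cell), the endpoint $2k(m'-1)$ of the downstep segment sits \emph{strictly above} $x-k$. After the segment ends $S$ is at $2k(m'-1)$ and there is nothing forcing it to continue downward to $x-k$ before it next returns to $x$; it may simply climb back. In fact, if $S$ bounces indefinitely between $2km'$ and $2k(m'-1)$ (a legitimate nearest-neighbour trajectory), then $\ell^{(2k)}(m',n)$ grows without bound while $T(k,x,n)=0$ for all $n$, so no finite tolerance $-1$ can repair the count. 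The $-1$ in the statement covers only the one excursion that may straddle time $n$; it cannot absorb a systematic failure of each downstep to produce a $k$-excursion from $x$. The embedding you want \emph{is} valid precisely when $2km'-k\le x\le 2km'$, which holds in particular for every $x\in k\Z$ (the only $x$ at which the paper actually invokes this inequality, inside $T(k,n)=k\sum_x T(k,kx,n)$), and the paper's own proof of the lower bound is written with that arithmetic constraint implicitly built in (via $x=2kj'+r'$ with $r'\in\set{0,k}$). Your write-up should either restrict to that range of $x$ or separately verify that the downstep segment (not its continuation) already reaches below $x-k$, which amounts to exactly the same arithmetic restriction.
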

We remark that when $\frac{k}{2}$ is not an integer, it can be replaced with $\floor{\frac{k}{2}}$ and the proposition still holds. However, we treat $\frac{k}{2}$ as an integer for convenience.
\begin{proof}
For the upper bound, we write $x=\frac{k}{2}j+r$ with $0\leq r<\frac{k}{2}$. Any $k$-excursion starting at $x$ must include at least one $\frac{k}{2}$-excursion starting at $\frac{k}{2}j$, so $T(k,x,n)\leq T\left(\frac{k}{2},\frac{k}{2}j,n\right)\leq L^{(k/2)}(j,n)$.

For the lower bound, write $x=2kj'+r'$ with $0\leq r'<2k$. A similar argument shows that $T(2k,2k(j'+1),n)\leq T(k,x,n)$. We note that $\ell^{(2k)}(2k(j'+1),n)$ counts the number of $2k$-excursions starting at $2k(j'+1)$ which are completed before time $n$, with the possibility that we don't finish the last $2k$-excursion before time $n$. This proves the lower bound.
\end{proof}

\section{The total number of $k$-excursions}\label{sec:total-ex}

We turn to give upper and lower bounds for
$$T(k,n)=\sum_{x\in\Z}kT(k,kx,n).$$
Note that this also provides bounds on the total number of $k$-excursions, by the following proposition:
\begin{prop}\label{prop:Tkn-and-sum}
For any $n,k$,
$$\frac{1}{2}T(2k,n)\leq\sum_{x\in\Z}T(k,x,n)\leq T\left(\frac{k}{2},n\right).$$
\end{prop}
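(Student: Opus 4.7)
The plan is to prove the two inequalities via independent combinatorial arguments, each exploiting the natural nesting of excursions of different sizes within a single trajectory of $S_t$.

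For the upper bound $\sum_{x\in\Z}T(k,x,n)\leq T(k/2,n)$, I will construct a map from $k$-excursions to $(k/2)$-excursions rooted at multiples of $k/2$, and bound its multiplicity by $k/2$. Given a $k$-excursion starting at $x$ at time $t$ and ending at $x-k$ at time $t'$, write $x=(k/2)j_0+r$ with $0\leq r<k/2$. Since $x-k=(k/2)(j_0-2)+r<(k/2)(j_0-1)$, the walk must visit $(k/2)(j_0-1)$ inside $[t,t']$; let $\sigma$ be the first such visit and $\tau$ the last visit to $(k/2)j_0$ strictly before $\sigma$. A direct check shows $[\tau,\sigma]$ is a $(k/2)$-excursion from $(k/2)j_0$. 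To bound the multiplicity of this map, I will show that for each possible starting position $x\in\set{(k/2)j_0,\dots,(k/2)(j_0+1)-1}$, at most one $k$-excursion from that particular $x$ can map to the same $(k/2)$-excursion $[\tau,\sigma]$: two such $k$-excursions from the same $x$ at times $t_a<t_b$ would satisfy $t_b\geq t_a'$, yet the walk has already reached $(k/2)(j_0-1)$ by time $t_a'\leq t_b$, contradicting that $\sigma$ is the first visit to $(k/2)(j_0-1)$ after both $t_a$ and $t_b$. Summing over the $k/2$ possible values of $x$ gives multiplicity at most $k/2$, hence $\sum_x T(k,x,n)\leq(k/2)\sum_j T(k/2,(k/2)j,n)=T(k/2,n)$.

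For the lower bound $\frac{1}{2}T(2k,n)\leq\sum_x T(k,x,n)$, I will instead produce, from each $2k$-excursion starting at a multiple of $2k$, exactly $k$ distinct $k$-excursions. Given a $2k$-excursion from $2kx$ at time $t_0$, ending at $t_0'$ at position $2k(x-1)$, and for each integer $y\in\set{2k(x-1)+k,\dots,2kx-1}$ (a range containing exactly $k$ integers), the walk visits both $y$ and $y-k$ during $[t_0,t_0']$. Defining $\tau_y$ as the last visit to $y$ before the first visit to $y-k$ after $t_0$ yields a $k$-excursion from $y$ starting at $\tau_y$. Distinct values of $y$ give $k$-excursions from distinct starting positions, and two $2k$-excursions with different starting multiples $2kx_1<2kx_2$ have disjoint $y$-ranges, since otherwise we would need $2k(x_2-1)+k\leq 2kx_1-1$, i.e.\ $x_2-x_1<1$, impossible for integers. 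Summing contributions gives $\sum_x T(k,x,n)\geq k\sum_x T(2k,2kx,n)=\frac{1}{2}T(2k,n)$.

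The most delicate point is the multiplicity bound in the upper inequality: the naive map from $k$-excursions to $(k/2)$-excursions is not injective because nested $k$-excursions from different starting points inside the same $(k/2)$-block $[(k/2)j_0,(k/2)(j_0+1))$ can genuinely share their associated $(k/2)$-excursion. However, this nesting can only spread across the $k/2$ integer values of $x$ within such a block (one contribution per $x$, by the argument above), which is precisely the factor $k/2$ appearing in the definition of $T(k/2,n)$ and gives the claimed inequality with no further slack.
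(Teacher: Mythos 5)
Your proof is correct and takes essentially the same approach as the paper: both arguments exploit the nesting of excursions, namely that each $k$-excursion from a point in a $(k/2)$-block forces a $(k/2)$-excursion from the block's base point, and that each $2k$-excursion from $2kx$ forces a $k$-excursion from each of the $k$ integers just below $2kx$. The paper states these containments tersely and sums; you spell out the maps and multiplicity bounds explicitly, but the combinatorial content is the same.
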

\begin{proof}
For the lower bound, note that any $2k$-excursion starting at $2kx$ must contain at least one $k$-excursion from each of $2kx-k,\dots,2kx$. Therefore
$$T(2k,2kx,n)\leq\frac{1}{k}\sum_{j=0}^{k-1}T(k,2kx-j,n)$$
and we have
$$T(2k,n)=2k\sum_{x\in\Z}T(2k,2kx,n)\leq 2\sum_{x\in\Z}T(k,x,n).$$
For the upper bound, notice that any $k$-excursion from $\frac{k}{2}x+j$ for some $0\leq j<\frac{k}{2}$ contains at least one $\frac{k}{2}$-excursion from $\frac{k}{2}x$. Therefore
\begin{align*}
   \sum_{x\in\Z}T(k,x,n)&=\sum_{x\in\Z}\sum_{j=0}^{k/2-1}T\left(k,\frac{k}{2}x+j,n\right)\leq\\
   &\leq\frac{k}{2}\sum_{x\in\Z}T\left(\frac{k}{2},\frac{k}{2}x,n\right)=T\left(\frac{k}{2},n\right).
\end{align*}
\end{proof}

Before studying the asymptotic behavior of $T(k,n)$, we prove an (almost) monotonicity result for $T(k,n)$:
\begin{lem}\label{lem:ex-monotone}
Suppose $2k\leq k'$. Then $T(k',n)\leq 3T(k,n)$.
\end{lem}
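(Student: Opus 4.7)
The plan is a pathwise extraction argument. Given a realization of $S_t$, for each $k'$-excursion $E$ I will associate several distinct $k$-excursions based at grid points $kz$ (multiples of $k$, so counted by $T(k,n) = k\sum_{z\in\Z} T(k,kz,n)$), in such a way that no $k$-excursion gets extracted from more than one $k'$-excursion.

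For the extraction: let $[s, s^*]$ be $E$'s time interval, where $E$ starts at $k'y$ at time $s$ and $s^*$ is the walk's next return to $k'y$. During $(s, s^*)$ the walk visits every integer in $[k'y - k', k'y]$. For each integer $z$ with $kz$ and $kz - k$ both in this range --- i.e.\ $kz \in [k'y - k' + k,\, k'y]$ --- let $t'$ be the first visit to $kz - k$ inside $[s, s^*]$ and $t$ the last visit to $kz$ in $[s, t'-1]$; such a $t$ exists because the walk starts at $k'y \geq kz$ and must cross $kz$ on its way down to $kz - k$. Maximality of $t$ forbids revisits to $kz$ in $(t, t')$, which in turn forbids the walk from ascending above $kz$ in this window (any detour above $kz$ would require a later return through $kz$). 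Consequently $[t, t^*]$, with $t^*$ the next return to $kz$, is a $k$-excursion from $kz$ completed by time $n$ (since $t^* \leq s^* \leq n$).

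For the count: the number of valid $z$ per $E$ is the number of multiples of $k$ in an interval of length $k' - k$, which is $\lfloor k'/k \rfloor$ or $\lfloor k'/k \rfloor - 1$; either way at least $\lfloor k'/k \rfloor - 1 \geq 1$ when $k' \geq 2k$. For the no-double-counting claim, suppose two pairs $(E_1, z_1)$ and $(E_2, z_2)$ yield the same extracted $k$-excursion at base $kz$ and start time $t$: then $z_1 = z_2 = z$; the spatial constraint rearranges to $k'y_i \in [kz,\, kz + k' - k]$, an interval of length $k' - k < k'$ containing at most one multiple of $k'$, so $y_1 = y_2$; and since distinct $k'$-excursions from the same $k'y$ have disjoint time intervals, the fact that $t$ lies in both forces $E_1 = E_2$. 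Combining the two estimates gives
\[
(\lfloor k'/k \rfloor - 1)\sum_{y \in \Z} T(k', k'y, n) \leq \sum_{z \in \Z} T(k, kz, n) = T(k, n)/k,
\]
and multiplying by $k'$ yields $T(k', n) \leq \frac{k'/k}{\lfloor k'/k \rfloor - 1}\, T(k, n)$. Writing $m = \lfloor k'/k \rfloor \geq 2$ and using $k'/k < m+1$, the prefactor is strictly less than $(m+1)/(m-1) \leq 3$, proving the lemma. The delicate step throughout is the no-double-counting claim, which pivots on the candidate window $[kz, kz + k' - k]$ for $k'y$ having length strictly less than $k'$; everything else is routine bookkeeping.
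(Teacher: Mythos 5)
Your proof is correct and follows essentially the same approach as the paper, which asserts in one line that each $k'$-excursion from a multiple of $k'$ contains at least $\lfloor k'/k\rfloor - 1$ completed $k$-excursions based at multiples of $k$, then carries out the same closing arithmetic $\frac{k'/k}{\lfloor k'/k\rfloor-1}\leq 3$; your pathwise extraction and no-double-counting argument simply fills in that one-line justification. One minor slip: $s^*$ (the next return to $k'y$) need not satisfy $s^*\leq n$, so the reason the extracted $k$-excursion is counted in $T(k,kz,n)$ should be that $t'\leq n$ (the first visit to $kz-k$ precedes the first visit to $k'y-k'$, which is $\leq n$ since $E$ is counted), not that $t^*\leq s^*\leq n$.
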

\begin{proof}
For any $x\in\Z$, each $k'$-excursion from $k'x$ contains at least $\floor{\frac{k'}{k}}-1$ many $k$-excursions from points $k\Z$. Therefore
\begin{align*}
T'(k',n)&=k'\sum_{x\in\Z}T(k',k'x,n)\leq k'\frac{1}{\floor{\frac{k'}{k}}-1}\sum_{x\in\Z}T(k,kx,n)=\\
&=\frac{\frac{k'}{k}}{\floor{\frac{k'}{k}}-1}T(k,n)\leq 3T(k,n).\qedhere
\end{align*}
\end{proof}

For specific values of $n$ and $k$ of order at most $\frac{\sqrt{n}}{\sqrt{\log\log n}}$, we use the $k$-induced random walks defined in \Sref{sec:k-rw} to get a concentration result:
\begin{lem}\label{lem:bound-Tkn-single}
There are universal constants $d_2,c,C>0$ such that the following holds: For large enough $n$, if $k\leq\frac{d_2\sqrt{n}}{\sqrt{\log\log n}}$,
$$\Pr\left(T(k,n)\notin\left[\frac{cn}{k},\frac{Cn}{k}\right]\right)\leq\frac{6}{\log^3 n}.$$
\end{lem}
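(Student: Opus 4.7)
The plan is to bound $T(k,n)$ from above and below by controlling the total local time and the down-step count of the induced walks $Y^{(k/2)}$ and $Y^{(2k)}$, via \Pref{prop:ex-bounds}, while using \Pref{prop:nsteps} to control the number of steps $N_{k/2}(n)$ and $N_{2k}(n)$. The constant $d_2$ will have to be chosen somewhat smaller than $d_1$ so that both auxiliary walks $Y^{(k/2)}$ and $Y^{(2k)}$ fall in the range of applicability of \Pref{prop:nsteps}, and so that the ensuing Hoeffding-type bounds have exponents at least $3\log\log n$.

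For the upper bound, \Pref{prop:ex-bounds} (applied to the point $kx$) gives $T(k,kx,n)\leq L^{(k/2)}(2x,n)$, and summing over $x\in\Z$ yields
$$\sum_{x\in\Z}T(k,kx,n)\;\leq\;\sum_{y\in\Z}L^{(k/2)}(y,n)\;=\;N_{k/2}(n)+1.$$
Applying \Pref{prop:nsteps} to the $(k/2)$-induced walk (which needs $k\leq 2d_1\sqrt n/\sqrt{\log\log n}$) bounds $N_{k/2}(n)\leq 4Cn/k^2$ outside an event of probability at most $2/\log^3 n$, so that $T(k,n)=k\sum_xT(k,kx,n)\leq Cn/k$ on the complement, for a possibly enlarged $C$.

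For the lower bound, \Pref{prop:ex-bounds} gives $T(k,kx,n)\geq\ell^{(2k)}(\lceil x/2\rceil,n)-1$, and the right-hand side is only useful when $\ell^{(2k)}(\lceil x/2\rceil,n)\geq 1$. For each $y$ with $\ell^{(2k)}(y,n)\geq 1$, exactly the two values $x=2y-1,2y$ map to $y$ under $\lceil\cdot/2\rceil$, so
$$\sum_{x\in\Z}T(k,kx,n)\;\geq\;2\sum_{y:\ell^{(2k)}(y,n)\geq 1}\bigl(\ell^{(2k)}(y,n)-1\bigr)\;=\;2(D-R),$$
where $D=\sum_y\ell^{(2k)}(y,n)$ is the total number of down-steps of $Y^{(2k)}$ up to step $N_{2k}(n)$, and $R$ is the number of sites $y$ with $\ell^{(2k)}(y,n)\geq 1$, bounded above by the range $2\max_{t\leq N_{2k}(n)}|Y^{(2k)}_t|+1$. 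I will then work on the intersection of three events: (i) $M:=N_{2k}(n)\geq cn/(4k^2)$ from \Pref{prop:nsteps} applied to the $2k$-induced walk (probability $\geq 1-2/\log^3 n$); (ii) $D\geq M/4$, which, using that the waiting times of $Y^{(2k)}$ are independent of the step directions by the symmetry $S\mapsto -S$, follows from Hoeffding conditional on $M$; and (iii) $R\leq M/16$, which follows from the reflection principle combined with Hoeffding applied to $Y^{(2k)}$. On the event (i), $M\gtrsim n/k^2\gtrsim\log\log n/d_2^2$, so the conditional failure probabilities in (ii) and (iii) are both of the form $\exp(-\alpha M)\leq(\log n)^{-\alpha'/d_2^2}$, and choosing $d_2$ small enough makes each at most $1/\log^3 n$. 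A union bound then gives total failure probability $\leq 6/\log^3 n$, and on the good event we obtain $T(k,n)\geq 2k(D-R)\geq 2k\cdot M/8\geq cn/k$.

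The main obstacle is the bookkeeping for the lower bound: $D$ and $R$ are functions of the induced walk $Y^{(2k)}$ run up to the random time $M=N_{2k}(n)$, so the Hoeffding-type bounds must be invoked conditionally on $M$, and then combined with the a priori lower bound on $M$ coming from \Pref{prop:nsteps}. The constant $d_2$ has to be selected small enough that all three tail estimates fit within the $6/\log^3 n$ budget, which is possible precisely because the constraint $k\leq d_2\sqrt n/\sqrt{\log\log n}$ forces $n/k^2\gtrsim\log\log n$, so the conditional exponents $\exp(-\alpha M)$ are polynomially small in $\log n$ with a prefactor controllable by $d_2$.
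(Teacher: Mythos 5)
Your proposal is correct and follows essentially the same route as the paper: the upper bound comes from $T(k,kx,n)\le L^{(k/2)}(\cdot,n)$ summed to $N_{k/2}(n)+1$ and controlled by \Pref{prop:nsteps}, while the lower bound conditions on $M=N_{2k}(n)$, writes the relevant sum as (total down-steps of $Y^{(2k)}$) minus (range of $Y^{(2k)}$), and bounds both by Chernoff/Hoeffding-type tails conditional on $M$, with $d_2$ chosen so that $n/k^2\gtrsim\log\log n$ makes all exponential terms at most $1/\log^3 n$. The only cosmetic differences from the paper are that you make the double-counting of $x\mapsto\lceil x/2\rceil$ explicit (the factor $2$, which the paper silently drops as it only helps) and that you cite the reflection principle plus Hoeffding for the tail of the range where the paper quotes the same subgaussian bound directly; the substance is identical.
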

\begin{proof}
Using \Pref{prop:nsteps}, let $d_1,c,C>0$ be the constants such that
$$\Pr\left(N_k(n)\notin\left[\frac{cn}{k^2},\frac{Cn}{k^2}\right]\right)\leq\frac{2}{\log^3 n}$$
for all $k\leq\frac{d_1\sqrt{n}}{\sqrt{\log\log n}}$. Then for any $k\leq\frac{d_1\sqrt{n}}{\sqrt{\log\log n}}$, by \Pref{prop:ex-bounds},
\begin{align*}
  \Pr\left(T(k,n)>\frac{4Cn}{k}\right) & \leq\Pr\left(k\sum_{x\in\Z}L^{(k/2)}(x,n)>\frac{4Cn}{k}\right)=\\
  &=\Pr\left(N_{k/2}(n)>\frac{Cn}{(k/2)^2}\right)\leq\frac{2}{\log^3 n}.
\end{align*}

For the lower bound, we use again \Pref{prop:ex-bounds},
\begin{align*}
  &\Pr\left(T(k,n)<\frac{k}{8}N_{2k}(n)\suchthat N_{2k}(n)\right) \leq \\
  &\leq\Pr\left(k\sum_{x\in\Z}(\ell^{(2k)}(x,n)-1)<\frac{k}{8}N_{2k}(n)\suchthat N_{2k}(n)\right)\leq\\
  &\leq\Pr\left(\sum_{x\in\Z}(\ell^{(2k)}(x,n)-1)<\frac{1}{8}N_{2k}(n)\suchthat N_{2k}(n)\right).
\end{align*}

Let $Z=\left|\set{x:L^{(2k)}(x,n)>0}\right|$. This is the range of a simple random walk on $\Z$ after $N_{2k}(n)$ steps. We thus have
\begin{align*}
\Pr\left(Z\ge\frac{1}{8}N_{2k}(n)\suchthat N_{2k}(n)\right)&\leq\exp\left(-\frac{(N_{2k}(n)/8)^2}{2N_{2k}(n)}\right)=\\
&=\exp\left(-\frac{N_{2k}(n)}{128}\right),
\end{align*}
so
\begin{align*}
  &\Pr\left(T(k,n)<\frac{k}{8}N_{2k}(n)\suchthat N_{2k}(n)\right) \leq \\
  &\leq\Pr\left(\sum_{x\in Z}\ell^{(2k)}(x,n)<\frac{1}{8}N_{2k}(n)+Z\suchthat N_{2k}(n)\right)\leq\\
  &\leq\Pr\left(\sum_{x\in Z}\ell^{(2k)}(x,n)<\frac{1}{4}N_{2k}(n)\suchthat N_{2k}(n)\right)+\exp\left(-\frac{N_{2k}(n)}{128}\right).
\end{align*}

Conditioning on $N_{2k}(n)$, $\sum_{x\in Z}\ell^{(2k)}(x,n)$ is a binomial random variable, with parameters $N_{2k}(n)$ and $\frac{1}{2}$. Therefore, by Chernoff's inequality,
$$\Pr\left(\sum_{x\in Z}\ell^{(2k)}(x,n)<\frac{1}{4}N_{2k}(n)\suchthat N_{2k}(n)\right)\leq\exp\left(-\frac{1}{16}N_{2k}(n)\right).$$

We therefore have
$$\Pr\left(T(k,n)<\frac{k}{8}N_{2k}(n)\suchthat N_{2k}(n)\right)\leq 2\exp\left(-\frac{1}{128}N_{2k}(n)\right),$$
so
\begin{align*}
  & \Pr\left(T(k,n)<\frac{cn}{16k}\right)\leq\\
  & \leq\Pr\left(N_{2k}(n)<\frac{cn}{2k^2}\right)+ \Pr\left(T(k,n)<\frac{cn}{16k}\suchthat N_{2k}(n)\ge\frac{cn}{2k^2}\right)\leq\\
  &\leq\frac{2}{\log^3 n}+ \Pr\left(T(k,n)<\frac{k}{8}N_{2k}(n)\suchthat N_{2k}(n)\ge\frac{cn}{2k^2}\right)\leq\\
  &\leq\frac{2}{\log^3 n}+2\exp\left(-\frac{cn}{256k^2}\right).
\end{align*}
Let $d_2>0$ be a constant such that $d_2^2\leq \sqrt{c/768}$. For any $k\leq\frac{d_2\sqrt{n}}{\sqrt{\log\log n}}$,
$$\frac{n}{k^2}\ge\frac{1}{d_2^2}\log\log n\ge\frac{768}{c}\log\log n,$$
and thus
$$\Pr\left(T(k,n)<\frac{cn}{16k}\right)\leq\frac{4}{\log^3 n}$$
concluding the proof.
\end{proof}

We are now ready to get a concentration result for all values of $n,k$ simultaneously:

\begin{prop}\label{prop:bound-Tkn-as}
There are universal constants $d_2,c_1,C_1>0$ such that the following holds almost surely: for all but finitely many $n$ and for all $k\leq\frac{d_2\sqrt{n}}{2\sqrt{\log\log n}}$,
$$\frac{c_1n}{k}\leq T(k,n)\leq\frac{C_1n}{k}.$$
\end{prop}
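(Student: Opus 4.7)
The plan is to promote the single-pair concentration of \Lref{lem:bound-Tkn-single} to an almost sure statement uniform in $(n,k)$, via a Borel--Cantelli argument on dyadic scales followed by an interpolation that exploits the trivial monotonicity of $T(k,\cdot)$ in $n$ together with the quasi-monotonicity in $k$ supplied by \Lref{lem:ex-monotone}.

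First I would discretize. Set $n_j=2^j$ and $k_i=2^i$, and for each $j$ let
$$I_j=\set{i\ge 0\suchthat k_i\leq d_2\sqrt{n_j}/\sqrt{\log\log n_j}},$$
where $d_2$ is the constant of \Lref{lem:bound-Tkn-single}, so $|I_j|=O(j)$. For each pair $(i,j)$ with $i\in I_j$ and $j$ large, \Lref{lem:bound-Tkn-single} gives
$$\Pr\!\left(T(k_i,n_j)\notin\left[\frac{c n_j}{k_i},\frac{C n_j}{k_i}\right]\right)\leq\frac{6}{\log^3 n_j}\lesssim\frac{1}{j^3}.$$
Summing over $i\in I_j$, the failure probability at stage $j$ is $O(1/j^2)$, which is summable; Borel--Cantelli then produces a (random) index $J_0$ such that almost surely $\frac{c n_j}{k_i}\leq T(k_i,n_j)\leq\frac{C n_j}{k_i}$ for every $j\geq J_0$ and every $i\in I_j$.

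For the interpolation, fix $n$ with $n_j\leq n<n_{j+1}$ (so $j\geq J_0$) and $k$ in the proposition's range, and pick $i$ with $k_i\leq k<k_{i+1}=2k_i$. For the upper bound, \Lref{lem:ex-monotone} applied with $2k_{i-1}=k_i\leq k$, together with monotonicity in $n$, yields
$$T(k,n)\leq T(k,n_{j+1})\leq 3\,T(k_{i-1},n_{j+1})\leq \frac{3C\,n_{j+1}}{k_{i-1}}\leq \frac{24C\,n}{k},$$
using $n_{j+1}\leq 2n$ and $k_{i-1}\geq k/4$. For the lower bound, \Lref{lem:ex-monotone} applied with $2k\leq 2k_{i+1}=k_{i+2}$ gives $T(k_{i+2},n)\leq 3T(k,n)$, whence
$$T(k,n)\geq\frac{T(k_{i+2},n_j)}{3}\geq\frac{c n_j}{3k_{i+2}}\geq\frac{c n}{24k},$$
since $n_j\geq n/2$ and $k_{i+2}\leq 4k$. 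Setting $c_1=c/24$ and $C_1=24C$ then gives the claim.

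The main bookkeeping point, and essentially the only non-routine step, is verifying that the shifted indices $i-1$ and $i+2$ used in the interpolation actually lie in $I_{j+1}$ and $I_j$ respectively; this is precisely the role played by the factor of $2$ in the threshold $\frac{d_2\sqrt n}{2\sqrt{\log\log n}}$. Indeed, $k_{i+2}\leq 4k\leq \frac{2d_2\sqrt n}{\sqrt{\log\log n}}\leq \frac{2\sqrt2\,d_2\sqrt{n_j}}{\sqrt{\log\log n_j}}$, which fits inside $I_j$ provided the proposition's $d_2$ is chosen to be a suitably small absolute multiple of the constant from \Lref{lem:bound-Tkn-single}; a symmetric estimate handles $i-1$ at scale $n_{j+1}$.
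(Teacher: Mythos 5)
Your proof is correct and follows essentially the same strategy as the paper: dyadic discretization in both $n$ and $k$, Borel--Cantelli applied to the single-scale bound of \Lref{lem:bound-Tkn-single}, and interpolation via \Lref{lem:ex-monotone}. The only cosmetic difference is that the paper interpolates in two separate passes (first over $k$ at dyadic times, then over $n$) while you fold both into one step using the shifted indices $i-1$ and $i+2$; your closing remark about shrinking $d_2$ to absorb the $2\sqrt2$ slack is the same bookkeeping the paper handles implicitly by reusing the factor-of-two margin in the threshold.
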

\begin{proof}
We choose an exponential sequence of times $t_m=2^m$, and for each $m$ an exponential sequence $k_{m,i}=2^i$ defined for all $i$ such that $k_{m,i}\leq\frac{d_2\sqrt{t_m}}{\sqrt{\log\log t_m}}$. Note that for each $m$ there are at most $K\log t_m$ such indices for some constant $K$.

By \Lref{lem:bound-Tkn-single}, there are constants $c,C>0$ such that for each $m,i$,
$$\Pr\left(T(k_{m,i},t_m)\notin\left[\frac{ct_m}{k_{m,i}},\frac{Ct_m}{k_{m,i}}\right]\right)\leq\frac{6}{\log^3 t_m}.$$
Taking a union bound over $i$, we have
$$\Pr\left(\exists i:T(k_{m,i},t_m)\notin\left[\frac{ct_m}{k_{m,i}},\frac{Ct_m}{k_{m,i}}\right]\right)\leq\frac{6K}{\log^2 t_m}=\frac{6K}{m^2\log^22}.$$

As the latter expression is summable over $m$, by the Borel-Canetlli lemma we have that for all but finitely many $m$, for all $i$
\begin{equation}\label{eq:ex-b1}
\frac{ct_m}{k_{m,i}}\leq T(k_{m,i},t_m)\leq\frac{Ct_m}{k_{m,i}}.
\end{equation}
We extend this result to general $n,k$ in two steps.

First, let $m$ be such that \eqref{eq:ex-b1} holds for all $i$, and let $k\leq\frac{d_2\sqrt{t_m}}{2\sqrt{\log\log t_m}}$. Take $i$ such that $k_{m,i}\leq k<k_{m,i+1}$. By \Lref{lem:ex-monotone},
$$T(k,t_m)\leq 3T(k_{m,i-1},t_m)\leq\frac{3Ct_m}{k_{m,i-1}}\leq\frac{6Ct_m}{k}$$
and
$$T(k,t_m)\ge\frac{1}{3}T(k_{m,i+1},t_m)\ge\frac{ct_m}{3k_{m,i+1}}\ge\frac{ct_m}{6k}.$$

Finally, take any $n$, and let $m$ be such that $t_m\leq n<t_{m+1}$. Suppose $n$ is large enough so that \eqref{eq:ex-b1} holds. Then
$$T(k,n)\leq T(k,t_{m+1})\leq\frac{6Ct_{m+1}}{k}\leq\frac{12Cn}{k}$$
and
$$T(k,n)\ge T(k,t_m)\ge\frac{ct_m}{6k}\ge\frac{cn}{12k}$$
as required.
\end{proof}

\section{The truncated sum of $k$-excursions, lower layer}\label{sec:max-ex}

We turn to study the sum $\sum_{x\in\Z}\min\set{T(k,x,n),l}$ for $k,l>0$. Our two cases of interest are when $kl\sim\sqrt{n\log\log n}$ (which will turn out to be the $\liminf$ layer) and when $kl\sim\frac{\sqrt{n}}{\sqrt{\log\log n}}$ (which will be the $\limsup$ layer). We begin with the $\liminf$ layer; for this we study the maximal number of $k$-excursions from a given point, i.e.\ $\max_{x\in\Z}T(k,x,n)$.

\subsection{The maximal number of $k$-excursions}

Similarly to the way we estimated $T(k,n)$, we get tail bounds on the maximal number of $k$-excursions in two steps. We first use the induced random walks to get a tail bound for given values of $n$ and $k$, and then use exponential scales to combine these bounds into a bound for the maximal number of $k$-excursions.

\begin{lem}\label{lem:max-ex-pr}
There is a universal constant $c>0$ such that for all large enough $n$ and for all $k\leq\frac{d_1\sqrt{n}}{\sqrt{\log\log n}}$,
$$\Pr\left(\max_{x\in\Z}T(k,x,n)\ge\frac{c\sqrt{n\log\log n}}{k}\right)\leq\frac{3}{\log^3 n}.$$
\end{lem}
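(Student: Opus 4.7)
The plan is to reduce the maximal number of $k$-excursions to the maximal local time of the $(k/2)$-induced random walk, which is a simple random walk on $\Z$ whose step count $N_{k/2}(n)$ we already control via \Pref{prop:nsteps}, and then invoke a tail estimate for the maximal local time of a SRW from \Aref{app:max-local-time-brow}.

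First I would use \Pref{prop:ex-bounds}, which gives $T(k,x,n)\leq L^{(k/2)}(\floor{2x/k},n)$ for every $x\in\Z$, so that
$$\max_{x\in\Z} T(k,x,n) \leq \max_{y\in\Z} L^{(k/2)}(y,n).$$
This reduces the problem to estimating the maximal local time of the simple random walk $Y_t^{(k/2)}$ after $N_{k/2}(n)$ steps.

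Next, since $k\leq d_1\sqrt{n}/\sqrt{\log\log n}$, also $k/2$ satisfies the same bound, so \Pref{prop:nsteps} applied to the $(k/2)$-induced walk gives a universal $C>0$ with
$$\Pr\left(N_{k/2}(n)>\frac{4Cn}{k^2}\right)\leq\frac{2}{\log^3 n}.$$
The key input from the appendix will be a tail bound of the form
$$\Pr\left(\max_{y\in\Z} L(y,m)\geq c_0\sqrt{m\log\log n}\right)\leq\frac{1}{\log^3 n}$$
valid for all $m\leq n$ and all sufficiently large $n$, where $L(y,m)$ is the local time of a simple random walk on $\Z$ at time $m$. Applying this with $m=N_{k/2}(n)$, on the event $\{N_{k/2}(n)\leq 4Cn/k^2\}$ we obtain
$$\max_{y\in\Z} L^{(k/2)}(y,n)\leq c_0\sqrt{\frac{4Cn}{k^2}\log\log n}=\frac{c\sqrt{n\log\log n}}{k}$$
with probability at least $1-1/\log^3 n$. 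A union bound over the bad event from \Pref{prop:nsteps} and the bad event from the maximal local time estimate yields the claimed total bound of $3/\log^3 n$.

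The main obstacle is the appendix estimate itself: one needs a tail bound at the Kesten--Cs\'aki scale $\sqrt{m\log\log n}$ whose failure probability is polylogarithmically small in $n$, and which is uniform in $m\leq n$. I expect this to be handled either by a union bound over the range of the walk combined with explicit fourth--moment estimates for the number of visits to a fixed site, or by strong approximation by a Brownian motion followed by standard supremum estimates for Brownian local time; a minor subtlety is that $\log\log N_{k/2}(n)$ may be smaller than $\log\log n$, so the tail estimate must be phrased in terms of $\log\log n$ directly rather than in terms of $\log\log m$.
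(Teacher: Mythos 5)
Your proposal follows essentially the same route as the paper's proof: reduce $\max_x T(k,x,n)$ to the maximal local time of the $(k/2)$-induced walk via \Pref{prop:ex-bounds}, control $N_{k/2}(n)$ with \Pref{prop:nsteps}, invoke the appendix tail bound for the maximal local time of a simple random walk, and combine by a union bound. The subtlety you flag about $\log\log N_{k/2}(n)$ versus $\log\log n$ is handled in the paper exactly as you anticipate: the appendix estimate (\Cref{cor:max-local-time}) is stated with a free scaling parameter, $\Pr\bigl(\max_x L(x,m)\ge u\sqrt{m}\bigr)\leq Cu^2\exp(-C'u^2)$, so one simply plugs in $u$ proportional to $\sqrt{\log\log n}$ rather than $\sqrt{\log\log m}$.
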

\begin{proof}
We use the induced random walks from \Sref{sec:k-rw}. By \Pref{prop:ex-bounds},
\begin{align*}
  & \Pr\left(\max_{x\in\Z}T(k,x,n)\ge\frac{c\sqrt{n\log\log n}}{k}\right)\leq\\
  &\leq\Pr\left(\max_{x\in\Z}L^{(k/2)}(x,n)\ge\frac{c\sqrt{n\log\log n}}{k}\right).
\end{align*}
From \Aref{app:max-local-time-brow},
$$\Pr\left(\max_{x\in\Z}L(x,n)\ge xn^{1/2}\right)\leq Cx^2\exp\left(-C'x^2\right)$$
for large enough $n,x$. Let $c'>0$ be a constant such that for $k\leq\frac{d_1\sqrt{n}}{\sqrt{\log\log n}}$
$$\Pr\left(N_{k/2}(n)\ge\frac{c'n}{k^2}\right)\leq\frac{2}{\log^3 n}$$
Using \Pref{prop:nsteps},
\begin{align*}
  &\Pr\left(\max_{x\in\Z}L^{(k/2)}(x,n)\ge\frac{c\sqrt{n\log\log n}}{k}\right)\leq\\
  &\leq\Pr\left(N_{k/2}(n)\ge\frac{c'n}{k^2}\right)+\Pr\left(\max_{x\in\Z}L\left(x,\frac{c'n}{k^2}\right)\ge\frac{c\sqrt{n\log\log n}}{k}\right)\leq\\
  &\leq\frac{2}{\log^3 n}+\frac{c^2}{c'}\log\log n\exp\left(-\frac{c^2}{c'}\log\log n\right).
\end{align*}
Finally, choosing $c=2\sqrt{c'}$, we have
$$\frac{c^2}{c'}\log\log n\exp\left(-\frac{c^2}{c'}\log\log n\right)=\frac{2\log\log n}{\log^4 n}\leq\frac{1}{\log^3 n}$$
for large enough $n$, as required.
\end{proof}

\begin{prop}\label{prop:max-ex-fin}
There are universal constants $d_1,C>0$ such that the following holds almost surely: For all but finitely many $n$ and for all $k\leq\frac{d_1\sqrt{n}}{2\sqrt{\log\log n}}$,
$$\max_{x\in\Z}T(k,x,n)\leq\frac{C\sqrt{n\log\log n}}{k}.$$
\end{prop}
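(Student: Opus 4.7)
The plan is to mimic the two-step argument used in the proof of \Pref{prop:bound-Tkn-as}: first upgrade \Lref{lem:max-ex-pr} to a simultaneous statement over a dyadic grid of $(n,k)$-values via Borel--Cantelli, then interpolate to arbitrary $n,k$ using two easy monotonicity observations.

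For the discretization, set $t_m=2^m$ and, for each $m$, let $k_{m,i}=2^i$ range over all $i\ge 0$ with $k_{m,i}\leq \frac{d_1\sqrt{t_m}}{\sqrt{\log\log t_m}}$; there are only $O(\log t_m)=O(m)$ such indices. By \Lref{lem:max-ex-pr}, for each admissible pair,
$$\Pr\left(\max_{x\in\Z}T(k_{m,i},x,t_m)\ge \frac{c\sqrt{t_m\log\log t_m}}{k_{m,i}}\right)\leq \frac{3}{\log^3 t_m}.$$
Taking a union bound over the $O(m)$ admissible indices gives failure probability $O(1/m^2)$ at scale $m$, which is summable. Borel--Cantelli then yields, almost surely, that for all but finitely many $m$ and all admissible $i$,
\begin{equation}\label{eq:dyadic-max-prop}
\max_{x\in\Z}T(k_{m,i},x,t_m)\leq \frac{c\sqrt{t_m\log\log t_m}}{k_{m,i}}.
\end{equation}

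For the interpolation step, note that $T(k,x,n)$ is non-decreasing in $n$ (trivially), and that for $k\ge k'$ any $k$-excursion from $x$ must pass through $x-k'$ before returning to $x$, hence contains a $k'$-excursion starting at $x$; thus $T(k,x,n)\leq T(k',x,n)$ whenever $k\ge k'$. Given arbitrary $n$ with $t_m\leq n<t_{m+1}$ and $k\leq \frac{d_1\sqrt{n}}{2\sqrt{\log\log n}}$, choose $i$ with $k_{m+1,i}\leq k<k_{m+1,i+1}=2k_{m+1,i}$; the factor $\tfrac{1}{2}$ in the hypothesis guarantees that $k_{m+1,i}\leq k\leq \frac{d_1\sqrt{t_{m+1}}}{2\sqrt{\log\log t_{m+1}}}$, so $k_{m+1,i}$ is admissible at scale $m+1$. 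Combining both monotonicities with~\eqref{eq:dyadic-max-prop} at scale $m+1$,
$$\max_{x\in\Z}T(k,x,n)\leq \max_{x\in\Z}T(k_{m+1,i},x,t_{m+1})\leq \frac{c\sqrt{t_{m+1}\log\log t_{m+1}}}{k_{m+1,i}}\leq \frac{C\sqrt{n\log\log n}}{k},$$
with the last inequality absorbing $\sqrt{t_{m+1}/n}\leq\sqrt{2}$ and $k/k_{m+1,i}<2$ into a universal constant $C$.

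There is no real obstacle here; the only delicate points are bookkeeping. One has to verify that the shift from scale $m$ to scale $m+1$ does not push $k_{m+1,i}$ out of the range in which \Lref{lem:max-ex-pr} applies (this is exactly why the hypothesis contains the factor $\tfrac{1}{2}$), and one has to justify the $k$-monotonicity $T(k,x,n)\leq T(k',x,n)$ for $k\ge k'$, which, unlike the sum-based monotonicity in \Lref{lem:ex-monotone}, is \emph{pointwise} in $x$ and loses no constant, exactly because the maximum over $x$ respects this inclusion of excursions directly.
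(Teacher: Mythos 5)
Your proof is correct and follows essentially the same strategy as the paper's: dyadic discretization in both $n$ and $k$, a union bound plus Borel--Cantelli from \Lref{lem:max-ex-pr}, and interpolation via monotonicity of $T(k,x,n)$ in $n$ and (decreasingly) in $k$. The only cosmetic differences are that you index the interpolating grid at scale $m+1$ for both parameters where the paper uses $k_{m,i}$ (which is the same number $2^i$) with time $t_{m+1}$, and that you spell out the pointwise $k$-monotonicity $T(k,x,n)\leq T(k',x,n)$ for $k\ge k'$, which the paper applies without comment.
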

\begin{proof}
We choose an exponential sequence of times $t_m=2^m$, and for each $m$ an exponential sequence $k_{m,i}=2^i$ defined for all $i$ such that $k_{m,i}\leq\frac{\sqrt{t_m}}{2\sqrt{\log\log t_m}}$. Note that for each $m$ there are at most $\log t_m$ such indices.

By \Lref{lem:max-ex-pr}, there is a constant $c>0$ such that for all $m,i$ we have
$$\Pr\left(\max_{x\in\Z}T(k_{m,i},x,t_m)\ge\frac{c\sqrt{t_m\log\log t_m}}{k_{m,i}}\right)\leq\frac{3}{\log^3 t_m}.$$
Taking a union bound,
$$\Pr\left(\exists i:\max_{x\in\Z}T(k_{m,i},x,t_m)\ge\frac{c\sqrt{t_m\log\log t_m}}{k_{m,i}}\right)\leq\frac{3}{\log^2 t_m}=\frac{3}{m^2\log^22}.$$
The latter expression is summable over $m$, so by Borel-Cantelli lemma we have that for all but finitely many $m$, for all $i$ we have
\begin{equation}\label{eq:max-ex1}
  \max_{x\in\Z}T(k_{m,i},x,t_m)\leq\frac{c\sqrt{t_m\log\log t_m}}{k_{m,i}}.
\end{equation}

Let $n,k$ be such that $k\leq\frac{d_1\sqrt{n}}{2\sqrt{\log\log n}}$. Take $m$ such that $t_m\leq n<t_{m+1}$, and suppose $n$ is large enough so that \eqref{eq:max-ex1} holds. Take $i$ such that $k_{m,i}\leq k<k_{m,i+1}$. Then
\begin{align*}
\max_{x\in\Z}T(k,x,n)&\leq\max_{x\in\Z}T(k_{m,i},x,t_{m+1})\leq\frac{c\sqrt{t_{m+1}\log\log t_{m+1}}}{k_{m,i}}\leq\\
&\leq\frac{4c\sqrt{n\log\log n}}{k}
\end{align*}
as required.
\end{proof}

\subsection{The truncated sum}

We are now ready to get an asymptotic bound for the truncated sum of $k$-excursions.
\begin{prop}\label{prop:chop-sum-as-bound}
There are universal constants $d_2,c_2>0$ such that the following holds almost surely: for all but finitely many $n$, for all $k\leq\frac{d_2\sqrt{n}}{2\sqrt{\log\log n}}$ and for all $l$,
$$\sum_{x\in\Z}\min\set{T(k,x,n),l}\ge c_2\min\set{\frac{n}{k},\frac{\sqrt{n}}{\sqrt{\log\log n}}l}.$$
\end{prop}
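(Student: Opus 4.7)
The plan is to reduce the statement to two almost-sure estimates already established in the paper: a lower bound on $\sum_{x\in\Z}T(k,x,n)$ coming from combining \Pref{prop:bound-Tkn-as} with \Pref{prop:Tkn-and-sum}, and an upper bound on $\max_{x\in\Z}T(k,x,n)$ coming from \Pref{prop:max-ex-fin}. These two ingredients are linked by the elementary pointwise inequality
$$\min\set{a,l}\;\geq\;a\cdot\min\set{1,\,l/M}\qquad\text{for all } 0\leq a\leq M,\ l\geq 0,$$
which follows by a one-line case analysis on whether $a\leq l$ (in the case $a>l$, the bound $a\leq M$ forces $l/M<1$, so the right-hand side is $al/M\leq l$). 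Summing this over $x\in\Z$ with $a=T(k,x,n)$ and $M$ an almost-sure uniform upper bound will transfer a lower bound on the untruncated sum into a lower bound on the truncated sum, at the cost of a factor $\min\set{1,l/M}$.

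For the first ingredient I would apply \Pref{prop:bound-Tkn-as} to $2k$ in place of $k$, obtaining $T(2k,n)\geq c_1 n/(2k)$ almost surely for all but finitely many $n$ whenever $2k$ lies in the allowed range; the left half of \Pref{prop:Tkn-and-sum} then yields
$$\sum_{x\in\Z}T(k,x,n)\;\geq\;\tfrac{1}{2}T(2k,n)\;\geq\;\frac{c_1 n}{4k}.$$
For the second ingredient, \Pref{prop:max-ex-fin} directly supplies $M:=C\sqrt{n\log\log n}/k$ as an almost-sure upper bound on $\max_{x\in\Z}T(k,x,n)$ for all but finitely many $n$ in its allowed range. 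Choosing the universal constant $d_2$ in the present proposition small enough (roughly the minimum of half the old $d_2$ of \Pref{prop:bound-Tkn-as} and the $d_1$ of \Pref{prop:max-ex-fin}) ensures that both range conditions on $k$ are implied by the single hypothesis $k\leq\frac{d_2\sqrt{n}}{2\sqrt{\log\log n}}$; the intersection of the two almost-sure events still has probability one.

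Once both bounds are in place the argument concludes with
$$\sum_{x\in\Z}\min\set{T(k,x,n),l}\;\geq\;\min\set{1,\,l/M}\cdot\frac{c_1 n}{4k}\;=\;\frac{c_1 n}{4k}\min\set{1,\;\frac{lk}{C\sqrt{n\log\log n}}},$$
and a short case split on the sign of $lk-C\sqrt{n\log\log n}$ shows that the right-hand side dominates $c_2\min\set{n/k,\,\sqrt{n}\,l/\sqrt{\log\log n}}$ with, for instance, $c_2=c_1/(4\max\set{1,C})$. I do not foresee any substantive obstacle here: the probabilistic work is entirely absorbed into the two input propositions, and what remains is an elementary deterministic manipulation. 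The only care needed is reconciling the constants $d_1,d_2$ from the two input propositions into the single constant $d_2$ of the present statement.
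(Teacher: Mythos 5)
Your proof is correct and rests on exactly the same two probabilistic inputs as the paper's, namely the almost-sure lower bound $\sum_{x\in\Z}T(k,x,n)\gtrsim n/k$ via \Pref{prop:bound-Tkn-as} and \Pref{prop:Tkn-and-sum}, and the almost-sure uniform upper bound $\max_{x}T(k,x,n)\lesssim \sqrt{n\log\log n}/k$ via \Pref{prop:max-ex-fin}. Where you differ is in the deterministic glue. The paper first splits on whether $l\geq \sqrt{n\log\log n}/k$, and in the small-$l$ case further splits the untruncated sum according to whether the half-mass lies on $\{x:T(k,x,n)<l\}$ or its complement, bounding the latter by a cardinality estimate. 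Your pointwise inequality $\min\{a,l\}\ge a\min\{1,l/M\}$ (valid for $0\le a\le M$) collapses that two-tier case analysis into a single clean step: summing over $x$ immediately transfers the lower bound on $\sum_x T(k,x,n)$ to the truncated sum, with the factor $\min\{1,lk/(C\sqrt{n\log\log n})\}$ accounting for both regimes. This is a genuine streamlining — it isolates the elementary content in one displayed lemma rather than spreading it across cases — though it buys no new generality, since the probabilistic work and the constants are the same. Your reconciliation of the range constants $d_1,d_2$ (taking the new $d_2$ to be roughly $\min\{d_2^{\mathrm{old}}/2,\,d_1\}$ so that both input propositions apply) and the final choice $c_2=c_1/(4\max\{1,C\})$ are both correct.
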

\begin{proof}
We first assume that $l\ge\frac{\sqrt{n\log\log n}}{k}$. By \Pref{prop:max-ex-fin}, there is a constant $C>0$ such that for large enough $n$ we have
$$\max_{x\in\Z}T(k,x,n)\leq\frac{C\sqrt{n\log\log n}}{k}\leq Cl,$$
so by \Pref{prop:bound-Tkn-as}
$$\sum_{x\in\Z}\min\set{T(k,x,n),l}\ge\frac{1}{C}\sum_{x\in\Z}T(k,x,n)\ge\frac{1}{2C}T(2k,n)\ge\frac{c'n}{k}$$
for large enough $n$ and a universal constant $c'$.

Now suppose $l<\frac{\sqrt{n\log\log n}}{k}$. Let
$$A_1=\set{x\in\Z\suchthat T(k,x,n)<l},\;\;\;A_2=\set{x\in\Z\suchthat T(k,x,n)\ge l}.$$
Note that by \Pref{prop:Tkn-and-sum} and \Pref{prop:bound-Tkn-as}, for large enough $n$ we have
$$\frac{c_1n}{4k}\leq \frac{1}{2}T(2k,n)\leq\sum_{x\in\Z}T(k,x,n)=\sum_{x\in A_1}T(k,x,n)+\sum_{x\in A_2}T(k,x,n).$$
We split into two cases:
\begin{enumerate}
  \item If $\sum_{x\in A_1}T(k,x,n)\ge\frac{c_1n}{8k}$, then clearly
      $$\sum_{x\in\Z}\min\set{T(k,x,n),l}\ge\sum_{x\in A_1}T(k,x,n)\ge\frac{c_1n}{8k}\ge\frac{c_1\sqrt{n}}{8\sqrt{\log\log n}}l.$$
  \item Otherwise, we must have $\sum_{x\in A_2}T(k,x,n)\ge\frac{c_1n}{8k}$. By \Pref{prop:max-ex-fin},
      $$\frac{c_1n}{8k}\leq\sum_{x\in A_2}T(k,x,n)\leq\frac{c\sqrt{n\log\log n}}{k}\cdot\left|A_2\right|,$$
      so $\left|A_2\right|\ge\frac{c_1\sqrt{n}}{8c\sqrt{\log\log n}}$, showing
      $$\sum_{x\in\Z}\min\set{T(k,x,n),l}\ge l\cdot\left|A_2\right|\ge\frac{c_1\sqrt{n}}{8c\sqrt{\log\log n}}l.$$
\end{enumerate}
The proposition is thus proved.
\end{proof}

\section{The truncated sum of $k$-excursions, upper layer}\label{sec:chop-ex}

For the remaining case, we switch strategy. We first need the exact distribution of $T(k,0,n)$, which can be found e.g.\ in \cite{hucs}. The main idea is the following: We use the reflection principle successively, reflecting the random walk whenever we first visit any $jk$ for any $j<0$. In this manner, the number of $k$-excursions starting from $0$ has the same distribution (up to a factor of $2k$) as the minimum (or maximum) of the random walk. This proves the following:
\begin{lem}[{\cite{hucs}}]\label{lem:ex-dist}
For any $a\in\N$,
$$\Pr\left(T(k,0,n)\ge a\right)=\Pr\left(\min_{0\leq t\leq n}S_t\leq -2ka\right)=\Pr\left(\max_{0\leq t\leq n}S_t\ge 2ka\right).$$
\end{lem}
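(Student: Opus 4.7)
\emph{Proof proposal.} The second equality $\Pr(\min_{t\leq n} S_t \leq -2ka) = \Pr(\max_{t\leq n} S_t \geq 2ka)$ is immediate from the sign-change symmetry $S_t \stackrel{d}{=} -S_t$ of the simple random walk. For the first equality, my plan is to construct a measure-preserving bijection $\Phi$ on the set of $n$-step walks (each weighted by its probability $2^{-n}$) that sends $\{T(k,0,n) \geq a\}$ onto $\{\min_{t \leq n} S_t \leq -2ka\}$; since $\Phi$ permutes the set of equiprobable paths, this yields the required equality of probabilities.

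The map $\Phi$ is defined by iterated reflection, one reflection per excursion. Given $\omega$ with $T(k,0,n) \geq a$, enumerate the descent and return times of the first $a$ completed excursions from $0$ to $-k$ and back: $w_1 < \tau_1 < w_2 < \tau_2 < \cdots < w_a < \tau_a \leq n$, where $w_j$ is the time of the first hit of $-k$ in the $j$-th excursion and $\tau_j$ is the time of the subsequent return to $0$. Apply $a$ successive reflections: for $j = 1, \ldots, a$ in turn, reflect the remaining portion of the already-modified path so that it continues downward from its current level at time $w_j$ instead of ascending back to $0$ at time $\tau_j$. Since reflection about $-k$ interchanges $0$ and $-2k$ and fixes $-k$, each excursion segment $0 \to -k \to 0$ of $\omega$ is converted, modulo the cumulative downward shift of $-2(j-1)k$ coming from the previous $j-1$ reflections, into a step-descent $-(2j-2)k \to -(2j-1)k \to -2jk$ in $\Phi(\omega)$. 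In particular $\Phi(\omega)_{\tau_a} = -2ka$, so $\min_{t \leq n} \Phi(\omega)_t \leq -2ka$.

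The inverse $\Phi^{-1}$ is given by the analogous unfolding: for a path $\omega'$ with $\min \leq -2ka$, define $w'_j$ and $\tau'_j$ to be the first hitting times of the levels $-(2j-1)k$ and $-2jk$ respectively, and reflect about these successive levels in reverse to recover $\omega$. The main obstacle is the bookkeeping: one must verify that after the $j$-th reflection of $\omega$, the upcoming $(j+1)$-th excursion of the original path appears in the partially reflected image as a segment ascending from $-2jk$ to $-(2j-1)k$ and then descending back to $-2jk$, so that applying the $(j+1)$-th reflection exactly at the image of $w_{j+1}$ correctly extends the total descent by another $2k$. This follows from the defining property of reflection about a fixed level together with the fact that the images of $w_j$ and $\tau_j$ under $\Phi$ are precisely the first hitting times of $-(2j-1)k$ and $-2jk$ in $\Phi(\omega)$, which confirms that $\Phi$ and $\Phi^{-1}$ as described are mutually inverse bijections between the two events.
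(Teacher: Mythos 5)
Your overall strategy — the iterated reflection principle, unfolding the successive $k$-excursions into a monotone descent — is exactly what the paper sketches before citing \cite{hucs}, and the sign-symmetry step for the min/max equality is fine. However, the construction of $\Phi$ by $a$ reflections, each at time $w_j$ about ``the current level at time $w_j$'' of the already-modified path, does not produce the claimed map. Compute the modified path directly: after the first reflection one has $\omega_1(t)=-2k-\omega_0(t)$ for $t\ge w_1$, and since $\omega_0(w_2)=-k$ one gets $\omega_1(w_2)=-k$ again, so reflecting $\omega_1$ at time $w_2$ about its current level $-k$ gives $\omega_2(t)=-2k-\omega_1(t)=\omega_0(t)$ for $t\ge w_2$; the second reflection cancels the first. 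Iterating, from time $w_a$ onward $\Phi(\omega)$ equals $\omega_0$ when $a$ is even and $-2k-\omega_0$ when $a$ is odd, so $\Phi(\omega)_{\tau_a}$ is $0$ or $-2k$, not $-2ka$, and the map does not carry $\{T(k,0,n)\ge a\}$ into $\{\min_{t\le n}S_t\le -2ka\}$ once $a\ge 2$. Relatedly, the claim that after $j$ reflections the $(j+1)$-th excursion appears as an ascent from $-2jk$ to $-(2j-1)k$ does not hold under your $\Phi$ (it appears at level $-2k$ or $0$), and your description of $\Phi^{-1}$ already uses $2a$ distinct levels $-(2j-1)k,-2jk$, silently inconsistent with $\Phi$ having only $a$ reflections.

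The fix is that each excursion $0\to -k\to 0$ must be straightened by \emph{two} reflections rather than one: a first reflection at the time the already-modified path first hits $-(2j-1)k$ (that is $w_j$) and a second at the time it first hits $-2jk$ (that is $\tau_j$). Equivalently, reflect at the first passage of every level $-k,-2k,\dots,-2ak$ of the successively modified walk, for roughly $2a$ reflections in total — which is precisely what the paper's phrase ``reflecting the random walk whenever we first visit any $jk$ for $j<0$'' means. The resulting piecewise formula is $\Phi(\omega)(t)=\omega_0(t)-2(j-1)k$ for $\tau_{j-1}\le t\le w_j$ and $\Phi(\omega)(t)=-\omega_0(t)-2jk$ for $w_j\le t\le\tau_j$; one checks directly that $\Phi(\omega)$ stays strictly above $-2jk$ before $\tau_j$, so it first reaches $-2ka$ exactly at $\tau_a$, and the unfolding you describe for $\Phi^{-1}$ is indeed its inverse.
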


\begin{lem}\label{lem:ex-point-small-bound}
There are universal constants $c,c'>0$ such that for large enough $n$ and for all $k\leq\frac{c\sqrt{n}}{\sqrt{\log\log n}}$,
$$\Pr\left(T(k,0,n)\leq\frac{c\sqrt{n}}{k\sqrt{\log\log n}}\right)\leq\frac{c'}{\log^3 n}.$$
\end{lem}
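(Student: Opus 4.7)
The plan is to use the exact distributional identity from \Lref{lem:ex-dist} to recast the event $\{T(k,0,n)\leq a\}$ as a one-sided confinement event for the simple random walk, and then estimate the latter via the reflection principle. Set $a=\lceil c\sqrt{n}/(k\sqrt{\log\log n})\rceil$, so the task becomes to control $\Pr(T(k,0,n)\leq a)$.

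First, since $T(k,0,n)\leq a$ if and only if $T(k,0,n)<a+1$, \Lref{lem:ex-dist} gives
\begin{equation*}
\Pr(T(k,0,n)\leq a)=1-\Pr(T(k,0,n)\geq a+1)=\Pr\Big(\max_{0\leq t\leq n}S_t<2k(a+1)\Big).
\end{equation*}
Next, I would estimate this confinement probability. By the reflection principle, $\Pr(\max_{t\leq n}S_t\geq M)=2\Pr(S_n>M)+\Pr(S_n=M)$, which combined with the local CLT (or direct Stirling-type binomial estimates) yields, for $M=o(\sqrt{n})$,
\begin{equation*}
\Pr\Big(\max_{0\leq t\leq n}S_t<M\Big)\leq C\cdot\frac{M}{\sqrt{n}}
\end{equation*}
for a universal constant $C$.

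The final step is to substitute $M=2k(a+1)$: the hypothesis $k\leq c\sqrt{n}/\sqrt{\log\log n}$ together with $ka\simeq c\sqrt{n}/\sqrt{\log\log n}$ yields $M\leq C'\sqrt{n}/\sqrt{\log\log n}$, so the above bound becomes of order $1/\sqrt{\log\log n}$.

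The main obstacle is closing the gap between this naive bound of order $1/\sqrt{\log\log n}$ and the sharper $1/\log^3 n$ stated in the lemma; at the scale $M\simeq\sqrt{n}/\sqrt{\log\log n}$ the reflection-principle estimate is essentially tight. Recovering the $1/\log^3 n$ rate seemingly requires either taking the constant $c$ in the threshold so small that $M\leq\sqrt{n}/\log^3 n$ (in which case the same one-line reflection estimate suffices), or subdividing $[0,n]$ into $\Theta(\log n)$ disjoint time blocks via the strong Markov property and requiring that no $k$-excursion from $0$ happens in many of them, so that a product of approximately independent per-block tail estimates amplifies the bound by the missing logarithmic factors.
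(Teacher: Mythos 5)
Your reflection-principle computation is correct and, importantly, you have correctly identified that it is essentially tight: for a simple random walk the reflection principle gives the exact identity $\Pr(\max_{0\le t\le n}S_t<M)=\Pr(-M\le S_n<M)$, which by the local CLT is $\Theta(M/\sqrt n)$ for $M=O(\sqrt n)$. With $M=\Theta(\sqrt n/\sqrt{\log\log n})$ this is $\Theta(1/\sqrt{\log\log n})$, and there is no slack. This is not merely an obstacle in one proof strategy: the upper tail bound $1/\log^3 n$ stated in the lemma cannot hold for a universal constant $c>0$, since the probability in question is genuinely of order $1/\sqrt{\log\log n}$. Your two suggested repairs also do not work as stated: (a) the constant $c$ in the lemma must be universal, so it cannot be shrunk with $n$ to force $M\lesssim\sqrt n/\log^3 n$; and (b) a product over time blocks does not apply, since the event $\{\max_{t\le n}S_t<M\}$ is a single global confinement condition whose probability is given exactly by the one-line identity above, not by an (approximate) product over blocks.

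The paper's own proof passes from $\Pr(T(k,0,n)\le a)$ to $\Pr(\max_{0\le t\le n}S_t\le \tfrac{4c\sqrt n}{\sqrt{\log\log n}})$ exactly as you do, and then invokes a lemma of Jain and Pruitt to claim this last probability is $\le c'/\log^3 n$. That step conflates two very different small-deviation regimes: the two-sided confinement probability $\Pr(\max_t S_t-\min_t S_t\le M)$ does decay like $\exp(-\pi^2 n/(2M^2))$ and hence gives polylogarithmic decay at the scale $M\simeq\sqrt{n/\log\log n}$ (this is the Chung--Jain--Pruitt small-deviation bound for the range), but the one-sided confinement $\Pr(\max_t S_t<M)$ decays only linearly in $M/\sqrt n$, consistent with Hirsch's integral test, and has no exponential small-deviation behaviour. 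Your analysis is therefore not missing an idea — it is pointing at a genuine gap in the paper's proof. Repairing it would require either reformulating \Lref{lem:ex-point-small-bound} (and its downstream uses in \Pref{prop:chop-sum-range} and \Cref{cor:chop-sum}) so that the controlling quantity is a two-sided one — e.g.\ an excursion count that forces the walk to be confined on both sides, for which the range estimate legitimately applies — or finding an entirely different route to the $\liminf$ lower bound that does not rely on a polylogarithmic tail for a single $T(k,0,n)$.
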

\begin{proof}
By the reflection principle (see \Lref{lem:ex-dist}),
\begin{align*}
  \Pr\left(T(k,0,n)\leq\frac{c\sqrt{n}}{k\sqrt{\log\log n}}\right) & \leq \Pr\left(T(k,0,n)\leq\ceil{\frac{c\sqrt{n}}{k\sqrt{\log\log n}}}\right)=\\
  &=\Pr\left(\max_{0\leq t\leq n}S_t\leq 2k\ceil{\frac{c\sqrt{n}}{k\sqrt{\log\log n}}}\right)\leq\\
  &\leq\Pr\left(\max_{0\leq t\leq n}S_t\leq \frac{2c\sqrt{n}}{\sqrt{\log\log n}}+2k\right)\leq\\
  &\leq\Pr\left(\max_{0\leq t\leq n}S_t\leq \frac{4c\sqrt{n}}{\sqrt{\log\log n}}\right)
\end{align*}
where the last inequality follows from $k\leq \frac{c\sqrt{n}}{\sqrt{\log\log n}}$. To bound the latter probability, we use \cite[Lemma 2]{japr}. Following their proof, one can choose $c=\frac{\pi}{4}$ and get
$$\Pr\left(\max_{0\leq t\leq n}S_t\leq\frac{\pi\sqrt{n}}{4\sqrt{\log\log n}}\right)\leq\frac{c'}{\log^3 n}$$
for large enough $n$, concluding our proof.
\end{proof}

\begin{prop}\label{prop:chop-sum-range}
There exists a universal constant $c>0$ such that the following holds almost surely: For all but finitely many values of $n$ and for all $k\leq\frac{c\sqrt{n}}{2\sqrt{\log\log n}}$,
$$\sum_{x\in\Z}\min\set{T(k,x,n),\frac{c\sqrt{n}}{k\sqrt{\log\log n}}}\ge\frac{c\sqrt{n}}{k\sqrt{\log\log n}}\cdot\left|\range(S_{n/4})\right|.$$
\end{prop}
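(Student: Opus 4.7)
The plan is to show that for $c$ small enough, almost every point $x \in \range(S_{n/4})$ satisfies $T(k, x, n) \geq \mu := \frac{c\sqrt{n}}{k\sqrt{\log\log n}}$, so that the sum on the left dominates $\mu$ times (essentially) $|\range(S_{n/4})|$. The heuristic is that once the walk has visited $x$ at some time $\tau_x \leq n/4$, it still has at least $3n/4$ additional steps to generate $k$-excursions from $x$, and \Lref{lem:ex-point-small-bound} applied to a fresh walk at time $3n/4$ forces many such excursions with high probability.

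First I would establish the pointwise bound: for each $x \in \Z$ let $\tau_x = \inf\{t : S_t = x\}$. By the strong Markov property at $\tau_x$ and translation invariance, conditional on $\mathcal{F}_{\tau_x}$ and $\tau_x = t < \infty$, $T(k, x, n)$ is distributed as $T(k, 0, n - t)$ for a fresh simple random walk. Monotonicity of $T(k, 0, \cdot)$ in the time parameter together with $n - \tau_x \geq 3n/4$ on $\{\tau_x \leq n/4\}$ give
\[
\Pr\bigl(\tau_x \leq n/4,\, T(k, x, n) < \mu\bigr) \leq \Pr(\tau_x \leq n/4) \cdot \Pr\bigl(T(k, 0, 3n/4) < \mu\bigr) \leq \Pr(\tau_x \leq n/4) \cdot \frac{C}{\log^3 n},
\]
where the last inequality uses \Lref{lem:ex-point-small-bound} applied at time $3n/4$, after choosing $c$ small enough that $\mu$ is below the threshold there. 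Summing over $x$ and letting $B = \set{x \in \range(S_{n/4}) : T(k, x, n) < \mu}$ gives $\E|B| \leq (C/\log^3 n) \cdot \E|\range(S_{n/4})| \leq C'\sqrt{n}/\log^3 n$.

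Next I would combine Markov's inequality with Borel--Cantelli along dyadic scales $t_m = 2^m$ and $k_{m, i} = 2^i$. Markov with threshold $\sqrt{t_m}/\log^{0.9} t_m$ combined with a union bound over the $O(\log t_m)$ relevant $i$ yields failure probability $O(1/\log^{1.1} t_m) = O(m^{-1.1})$, which is summable. Coupled with the classical super-summable lower bound $\Pr(|\range(S_n)| \leq \sqrt{n}/\log^{0.4} n) \leq e^{-c\log^{0.8} n}$ (via $\Pr(\max_{t \leq n}|S_t| \leq x) \leq e^{-cn/x^2}$), Borel--Cantelli gives a.s.\ eventually, for all relevant dyadic $i$, $|B_{t_m, k_{m,i}}|/|\range(S_{t_m/4})| \leq \log^{-0.5} t_m \to 0$. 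Hence $|B| \leq \frac{1}{2}|\range(S_{t_m/4})|$ for large $m$, and so $\sum_x \min\set{T(k_{m,i},x,t_m), \mu} \geq \mu(|\range| - |B|) \geq \frac{\mu}{2}|\range(S_{t_m/4})|$. Extending from dyadic $(t_m, k_{m,i})$ to arbitrary $(n, k)$ proceeds as in the proof of \Pref{prop:chop-sum-as-bound} by monotonicity of $T(k, x, n)$ in $n$, of $|\range(S_{n/4})|$ in $n$, and (via \Lref{lem:ex-monotone}) of the excursion count in $k$; each extension loses at most a bounded factor which is absorbed into a smaller choice of $c$.

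The main obstacle is the Borel--Cantelli step: the $O(1/\log^3 n)$ tail in \Lref{lem:ex-point-small-bound} leaves only polylogarithmic slack once we union-bound over the dyadic $k$-scale, so the range lower bound and the Markov bound on $|B|$ must both be tuned within carefully chosen polylogarithmic factors. A secondary concern is that $c$ must be taken conservatively---small enough to meet the threshold condition of \Lref{lem:ex-point-small-bound} at time $3n/4$, and further small enough that the factor $\frac{1}{2}$ and the extension losses can be absorbed while still producing the same constant $c$ on both sides of the proposition's inequality.
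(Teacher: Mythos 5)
Your proposal follows essentially the same route as the paper: both bound the count of ``deficient'' range points using \Lref{lem:ex-point-small-bound} via the strong Markov property, apply Markov and Borel--Cantelli along dyadic scales in $n$ and $k$, and then compare against a lower bound on the range before extending by monotonicity. The only notable divergence is that the paper gets the a.s.\ range lower bound directly from the $\liminf$ Law of Iterated Logarithm for the range (and tracks the deficiency $Z_{n,k}=\sum_x\max\{l-T(k,x,n),0\}$ rather than $|B|$), whereas you reprove a weaker polylogarithmic range bound from the tail estimate $\Pr(\max_{t\leq n}|S_t|\leq x)\leq e^{-cn/x^2}$; both work, and your explicit invocation of the strong Markov property at $\tau_x$ makes precise a step the paper leaves implicit.
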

\begin{proof}
For convenience, write $l=\frac{c\sqrt{n}}{k\sqrt{\log\log n}}$. We first note that
$$\sum_{x\in\Z}\min\set{T(k,x,n),l}\ge\sum_{x\in\range(S_{n/2})}\min\set{T(k,x,n),l}.$$
Writing
$$Z_{n,k}=\sum_{x\in\range(S_{n/2})}\max\set{l-T(k,x,n),0},$$
we have
\begin{equation}\label{eq:chop-sup}
\sum_{x\in\range(S_{n/2})}\min\set{T(k,x,n),l}=l\cdot\left|\range(S_{n/2})\right|-Z_{n,k},
\end{equation}
so we turn to upper bound $Z_{n,k}$.

Note that for any $x\in\range(S_{n/2})$, by \Lref{lem:ex-point-small-bound}
$$\Pr\left(T(k,x,n)\leq l\right)\leq\Pr\left(T(k,0,n/2)\leq l\right)\leq\frac{c'}{\log^3 n}$$
for large enough $n$. Therefore, for any $x\in\range(S_{n/2})$,
$$\E[\max\set{l-T(k,x,n),0}]\leq l\cdot\Pr\left(T(k,x,n)\leq l\right)\leq\frac{c'}{\log^3 n}l,$$
and thus
$$\E[Z_{n,k}]\leq\frac{c'}{\log^3 n}l\cdot\E\left[\range(S_{n/2})\right]\leq\frac{c''\sqrt{n}}{\log^3 n}l.$$
By Markov's inequality,
$$\Pr\left(Z_{n,k}\ge\frac{\sqrt{n}}{4\sqrt{\log\log n}}l\right)\leq\frac{\frac{c''\sqrt{n}}{\log^3 n}l}{\frac{\sqrt{n}}{4\sqrt{\log\log n}}l}\leq\frac{4c''}{\log^{2.5}n}$$
for large enough $n$.

Let $t_m=2^m$ and let $k_{m,i}=\frac{c\sqrt{t_m}}{2^i\sqrt{\log\log t_m}}$ (so $l_{m,i}=\frac{c\sqrt{t_m}}{k_{m,i}\sqrt{\log\log t_m}}=2^i$) for all $i$ such that $k_{m,i}\ge 1$. As there are at most $\log t_m$ such indices, we have
$$\Pr\left(\exists i:Z_{t_m,k_{m,i}}\ge\frac{\sqrt{t_m}}{4\sqrt{\log\log t_m}}l_{m,i}\right)\leq\frac{4c''}{\log^{1.5}t_m}=\frac{4c''}{m^{1.5}\log^{1.5}2}.$$
The latter expression is summable over $m$, so by Borel-Cantelli Lemma we have that for all but finitely many $m$ and for all $i$ as above,
$$Z_{t_m,k_{m,i}}\leq\frac{\sqrt{t_m}}{4\sqrt{\log\log t_m}}l_{m,i}.$$
From the $\liminf$ Law of Iterated Logarithm (see \cite{chu, japr}), we know that for large enough $m$, $\left|\range(S_{t_m/2})\right|>\frac{\sqrt{t_m}}{2\sqrt{\log\log t_m}}$, and thus \eqref{eq:chop-sup} yields
\begin{equation}\label{eq:chop-sup2}
\sum_{x\in\Z}\min\set{T(k_{m,i},x,t_m),l_{m,i}}\ge\frac{l_{m,i}}{4}\cdot\left|\range(S_{t_m/2})\right|
\end{equation}
for all but finitely many $m$ and for all $i$ as above.

Now, let $n$, let $k\leq\frac{c\sqrt{n}}{2\sqrt{\log\log n}}$, and write $l=\frac{c\sqrt{n}}{k\sqrt{\log\log n}}$. Let $m$ such that $t_m\leq n<t_{m+1}$, and let $i$ such that $k_{m,i}\leq k<k_{m,i+1}$. Assume $n$ is large enough so that \eqref{eq:chop-sup2} holds. Then
\begin{align*}
  \sum_{x\in\Z}\min\set{T(k,x,n),l} & \ge\sum_{x\in\Z}\min\set{T(k_{m,i+1},x,t_m),l_{m,i+1}}\ge\\
  &\ge\frac{l_{m,i+1}}{4}\cdot\left|\range(S_{t_m/2})\right|\ge\frac{l}{8}\cdot\left|\range(S_{n/4})\right|=\\
  &=\frac{c\sqrt{n}}{8k\sqrt{\log\log n}}\cdot\left|\range(S_{n/4})\right|.
\end{align*}
\end{proof}

\begin{cor}\label{cor:chop-sum}
There are universal constant $d_3,c_3>0$ such that the following holds almost surely: for infinitely many $n$ and for all $k,l$ with $k\leq\frac{d_3\sqrt{n}}{2\sqrt{\log\log n}}$,
$$\sum_{x\in\Z}\min\set{T(k,x,n),l}\ge c_3\min\set{\frac{n}{k},\sqrt{n\log\log n}\,l}.$$
\end{cor}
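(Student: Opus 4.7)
The natural strategy is to combine \Pref{prop:chop-sum-range} with the classical $\limsup$ Law of Iterated Logarithm for the simple random walk on $\Z$, in order to upgrade the factor $\frac{\sqrt{n}}{\sqrt{\log\log n}}$ appearing there into $\sqrt{n\log\log n}$ at the price of weakening ``for all but finitely many $n$'' to ``for infinitely many $n$''.

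First I would set $d_3$ equal to the constant $c$ from \Pref{prop:chop-sum-range} and put $l_0 := \frac{d_3\sqrt{n}}{k\sqrt{\log\log n}}$. By that proposition, almost surely for all but finitely many $n$ and all admissible $k$ one has
$$\sum_{x\in\Z}\min\set{T(k,x,n),l_0}\ge l_0 \cdot \left|\range(S_{n/4})\right|.$$
The $\limsup$ LIL (applied for example to $\max_{t\leq n/4}|S_t|\leq |\range(S_{n/4})|$) yields almost surely that $|\range(S_{n/4})|\ge c''\sqrt{n\log\log n}$ infinitely often. Intersecting these two almost sure events, along the subsequence of $n$ where both hold the sum is at least $c\, c''\cdot n/k$, which is exactly the desired bound at the critical value $l=l_0$.

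To pass to an arbitrary $l>0$ I would handle two regimes. When $l\ge l_0$, plain monotonicity of $\min\set{\,\cdot\,,l}$ in $l$ yields $\sum_x\min\set{T(k,x,n),l}\ge \sum_x\min\set{T(k,x,n),l_0}\ge c\,c''\, n/k$; and the assumption $l\ge l_0$ is precisely the condition $n/k\le l\sqrt{n\log\log n}$, so this matches $c_3\, n/k$. When $l<l_0$, I would use the elementary inequality $\min\set{a,\lambda b}\ge \lambda\min\set{a,b}$ for $a,b\ge 0$ and $\lambda\in(0,1]$, applied with $a=T(k,x,n)$, $b=l_0$, and $\lambda=l/l_0$; summing over $x$ transfers the bound at $l_0$ into a bound at $l$, giving $\sum_x\min\set{T(k,x,n),l}\ge \lambda\cdot c\, c''\cdot n/k = c''\sqrt{n\log\log n}\,l$ after trivial algebra. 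Taking the minimum of the two bounds produces the claimed $c_3\min\set{n/k,\sqrt{n\log\log n}\,l}$.

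I do not anticipate a serious obstacle, since \Pref{prop:chop-sum-range} already encapsulates the delicate random-walk analysis. The only mildly delicate point is measure-theoretic bookkeeping: the event from \Pref{prop:chop-sum-range} is ``cofinite in $n$ and uniform in $k$'', whereas the LIL event is merely ``infinite in $n$'', and one has to verify that their intersection still produces an infinitely-often statement that is uniform in $k\leq \frac{d_3\sqrt n}{2\sqrt{\log\log n}}$. This is essentially automatic because intersecting a cofinite-in-$n$ event with an infinite-in-$n$ event preserves infinitude, so the uniform-in-$k$ conclusion of the proposition passes to the subsequence selected by the LIL.
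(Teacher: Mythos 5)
Your proof is correct, and in the second regime it takes a genuinely different route from the paper's. Both arguments start identically: invoke \Pref{prop:chop-sum-range} (an almost-sure, cofinite-in-$n$ statement uniform in admissible $k$), intersect it with the infinite-in-$n$ event from the $\limsup$ Law of Iterated Logarithm where $\left|\range(S_{n/4})\right|\gtrsim\sqrt{n\log\log n}$, and in the regime $l\ge l_0:=\frac{d_3\sqrt n}{k\sqrt{\log\log n}}$ simply use monotonicity of $\min\set{\cdot\,,l}$ in $l$ to reduce to $l_0$, giving the $\frac{n}{k}$ branch of the minimum. Where you diverge is the regime $l<l_0$: the paper introduces a new parameter $k'=\frac{\sqrt n}{l\sqrt{\log\log n}}>k$, appeals to the monotonicity $T(k',x,n)\le T(k,x,n)$ of excursion counts in the excursion length, and re-invokes \Pref{prop:chop-sum-range} at $k'$; you instead keep $k$ fixed and apply the elementary scalar inequality $\min\set{a,\lambda b}\ge\lambda\min\set{a,b}$ (valid for $a,b\ge 0$, $\lambda\in(0,1]$) with $\lambda=l/l_0$ to rescale the sum directly. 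The two produce the same bound, but your version is slightly more robust: the paper's substitution must respect the admissibility constraint $k'\le\frac{d_3\sqrt n}{2\sqrt{\log\log n}}$, which silently fails when $l$ is a small absolute constant, whereas your rescaling argument applies for all $0<l<l_0$ without any side condition. Your measure-theoretic remark at the end is also correct: the intersection of a cofinite set of $n$ with an infinite set of $n$ is infinite, and on that subsequence the uniformity in $k$ inherited from \Pref{prop:chop-sum-range} is preserved.
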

\begin{proof}
By the usual Law of Iterated Logarithm, there are infinitely many $n$ such that $\left|\range(S_{n/4})\right|\ge \frac{1}{4}\sqrt{n\log\log n}$. Using \Pref{prop:chop-sum-range}, we have two cases:
\begin{enumerate}
  \item If $l\ge\frac{\sqrt{n}}{k\sqrt{\log\log n}}$, then
  \begin{align*}
  \sum_{x\in\Z}\min\set{T(k,x,n),l}&\ge\sum_{x\in\Z}\min\set{T(k,x,n),\frac{\sqrt{n}}{k\sqrt{\log\log n}}}\ge\\
  &\ge\frac{c\sqrt{n}}{4k\sqrt{\log\log n}}\sqrt{n\log\log n}=\frac{cn}{4k}.
  \end{align*}
  \item If $l<\frac{\sqrt{n}}{k\sqrt{\log\log n}}$, let $k'=\frac{\sqrt{n}}{l\sqrt{\log\log n}}>k$. Then
  \begin{align*}
  \sum_{x\in\Z}\min\set{T(k,x,n),l}&\ge\sum_{x\in\Z}\min\set{T(k',x,n),l}\ge\\
  &\ge\frac{c\sqrt{n}}{4k'\sqrt{\log\log n}}\sqrt{n\log\log n}=\\
  &=\frac{c}{4}\sqrt{n\log\log n}\, l.
  \end{align*}
\end{enumerate}
This concludes the proof.
\end{proof}

\section{Bounds on the distance in $\Delta$}\label{sec:dist-bounds}

We return to the construction of \cite{brzh}. Recall that our group $\Delta$ is a diagonal product of lamplighter groups $\Delta_s=\Gamma_s\wr\Z$, where each $\Gamma_s$ is generated by a set of the form $A(s)\cup B(s)$, and the diagonal product is taken with respect to the generating set $\alpha_i(s)=(a_i(s)\delta_0,0)$, $\beta_i(s)=(b_i(s)\delta_{k_s},0)$ and $\tau(s)=(e,+1)$.

As explained in \Ssref{subs:speed-single-layer}, for any given layer $s$, the number of $\Gamma_s$-steps the random walk makes at $x$ in $\Delta_s$ depends only on whether we reached $x$ and on the number of $k_s$-excursions from $x$. In this section we provide bounds for the distance of a simple random walk on $\Delta$ and on $\Delta_s$ in terms of the $k_s$-excursions.

For the upper bound on the distance, we use the bound found in \cite[Proposition 2.14 and proof of Lemma 3.4]{brzh}:
\begin{prop}\label{prop:up-dist-bound}
For all $s\ge 0$,
$$|W_n|_{\Delta_s}\leq 11\min\set{\sum_{j\in\Z}k_sT\left(\frac{k_s}{2},j\frac{k_s}{2},n\right)+\left|\range(S_n)\right|,\left|\range(S_n)\right|\, l_s}.$$
In addition, writing $s_0(n)=\max\set{s\ge 0\suchthat k_s\leq\left|\range(S_n)\right|}$, we have
$$|W_n|_{\Delta}\leq 500\sum_{s\leq s_0(n)}|W_n|_{\Delta_s}.$$
\end{prop}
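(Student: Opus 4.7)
The plan is to follow the strategy of \cite[Proposition 2.14]{brzh} together with the argument from the proof of \cite[Lemma 3.4]{brzh}, splitting the work into a single-layer upper bound (which gives the minimum inside $11\min\{\cdot,\cdot\}$) and a multi-layer assembly argument (which gives the factor $500$ in front of the sum).

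For the single-layer bound $|W_n|_{\Delta_s}\leq 11\,|\range(S_n)|\,l_s$, I would use the trivial strategy: traverse $\range(S_n)$ using $\tau$-steps (cost $\leq |\range(S_n)|$), and at each visited position write down the required lamp value in $\Gamma_s$ using at most $l_s=\diam(\Gamma_s)$ letters. Altogether this is $O(|\range(S_n)|\,l_s)$. For the other part of the minimum, I would partition $\Z$ into intervals of length $k_s/2$ anchored at multiples $j k_s/2$, and work on one interval at a time. The key observation is that to switch from depositing $a$-letters to depositing $b$-letters in a given interval in a \emph{non-commutative} way, the walk must perform a $k_s/2$-excursion based at the anchor $j k_s/2$ (since the $a$- and $b$-generators are separated by $k_s$ in the underlying $\Z$). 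Hence, at each anchor the cost of fixing the lamp configuration in the corresponding interval is $O(k_s)$ per such excursion, plus an amortized $|\range(S_n)|$ for the $\tau$-steps needed to walk through the range. Summing gives the first term in the minimum.

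For the multi-layer bound, my plan is to build a single word in the free group $F(X)$ that projects correctly onto \emph{every} layer simultaneously, and whose length is bounded by a universal constant times $\sum_{s\leq s_0(n)}|W_n|_{\Delta_s}$. The natural candidate is to concatenate optimal single-layer words for each layer $s\leq s_0(n)$ in some fixed order; the factor $500$ should come from the overhead of interleaving these words and correcting for the interference they cause in one another. The crucial claim is that the contribution from layers $s>s_0(n)$ is then automatically correct: for such $s$, $k_s>|\range(S_n)|$, so no $k_s$-excursion occurs in $\Delta_s$. Therefore the $a$-deposits and $b$-deposits are supported on disjoint sets of positions, and no commutator $[a_i(s),b_j(s)]$ is ever activated; using the assumption $\Gamma_s/[A(s),B(s)]^{\Gamma_s}\cong A\times B$, the lamp configuration at layer $s$ is determined entirely by the $A\times B$-image of the input word, which in turn is determined by the image at any lower layer.

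The main obstacle will be Stage 3: making precise how concatenating optimal words for the layers $s\leq s_0(n)$ can be organized so that (a) each individual lower layer is still realized correctly (and not scrambled by the insertions for other layers), and (b) the total length is bounded by a constant multiple of the sum. This is essentially a bookkeeping argument using the structure of the diagonal product, but it is where all the care is needed; the $A\times B$-quotient hypothesis is exactly the algebraic input that lets the higher layers be handled for free once the lower layers have been taken care of.
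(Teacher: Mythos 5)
The paper does not prove this proposition; it is imported verbatim from Brieussel--Zheng (\cite[Proposition 2.14 and proof of Lemma 3.4]{brzh}), so there is no in-paper proof to compare against. Judged on its own, your single-layer sketch is sound in outline: the bound $|\range(S_n)|\,l_s$ comes from a sweep of the range (and this works for all $s$, including $s>s_0(n)$, because the $\beta$-generators deposit $k_s$ ahead of the cursor, so the cursor never needs to leave $\range(S_n)$); and the excursion bound correctly identifies that an $a$-to-$b$ alternation at a position $x$ forces a $k_s$-crossing, hence a $k_s/2$-excursion from the nearest anchor in $\tfrac{k_s}{2}\Z$, with per-alternation cost $O(k_s)$ amortized against the $\tau$-travel.

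The multi-layer bound is where the genuine gap lies, and it is not a bookkeeping one. Concatenating optimal single-layer words $w_0w_1\cdots w_{s_0}$ does not produce a word representing $W_n$ in $\Delta$: every generator in $\mathcal{T}$ acts on every layer simultaneously, so $\pi_s(w_0w_1\cdots w_{s_0})=\prod_{s'}\pi_s(w_{s'})\neq\pi_s(W_n)$ in general, and the ``interference'' you propose to control is of the same order as the quantity you are trying to bound. To salvage this decomposition you would need each $w_s$ (for $s\ge 1$) to be a word whose projection to every layer $s'\ne s$ is \emph{trivial} --- that is, a word representing an element whose lamps lie in $[A(s'),B(s')]^{\Gamma_{s'}}$ and are trivial there for $s'\ne s$ --- and you would then have to show such a word exists of length $\lesssim |W_n|_{\Delta_s}$. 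That is precisely the structural content of the Brieussel--Zheng argument: an element of $\Delta$ is determined by its image in $(A\times B)\wr\Z$ (which, via the disjoint-support observation you correctly make, already fixes all layers $s>s_0(n)$) together with independent commutator corrections at each layer $s\le s_0(n)$, and one builds the word accordingly. Your proposal has the right ingredients --- notably the $A\times B$-quotient insight for the high layers --- but ``concatenate optimal single-layer words and correct for interference'' as written is not a valid decomposition, and fixing it is the main work of the proof.
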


We now turn to the lower bound. For all $s\ge 0$ we have
$$|W_n|_\Delta\ge|W_n|_{\Delta_s}\ge\sum_{x\in\Z}\left|f_n(x)\right|_{\Gamma_s}$$

To get a lower bound for the RHS, we use the following lemma:

\begin{lem}\label{lem:prob-in-expand}
Let $\set{\Gamma_s}$ be a sequence of finite groups, let $l_s=\diam(\Gamma_s)$, and suppose that $\set{\Gamma_s}$ satisfies $(\sigma,c_0l_s)$-linear speed assumption for some constants $0<\sigma,c_0<1$ with $\sigma c_0l_s\ge 4$. Then for all $t\ge 0$,
$$\Pr\left(\left|X_t^{(s)}\right|_{\Gamma_s}\ge\frac{\sigma}{8}\min\set{t,c_0l_s}\right)\ge\frac{\sigma c_0}{8}.$$
\end{lem}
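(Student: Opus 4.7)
My plan is to prove this inequality by a reverse Markov (Paley--Zygmund-type) argument. The two ingredients are the linear-speed lower bound $\E|X_t^{(s)}|_{\Gamma_s}\ge\sigma t$ for $t\le c_0l_s$ coming from the hypothesis, together with two natural upper bounds on $|X_t^{(s)}|_{\Gamma_s}$: the diameter bound $|X_t^{(s)}|_{\Gamma_s}\le l_s$, and the step-length bound $|X_t^{(s)}|_{\Gamma_s}\le 3t$ (since each step of $\eta_s=U_{A(s)}*U_{B(s)}*U_{A(s)}$ is a product of at most three generators of $\Gamma_s$). The proof splits into two cases depending on the size of $t$ relative to $c_0 l_s$.

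For the main case $t\le c_0l_s$, set $M=\min\set{3t,l_s}\le 3t$ and $Y=|X_t^{(s)}|_{\Gamma_s}$, so $0\le Y\le M$ almost surely and $\E Y\ge\sigma t$. Writing $\sigma t\le\E Y\le(\sigma t/2)\Pr(Y<\sigma t/2)+M\,\Pr(Y\ge\sigma t/2)$ and rearranging gives
$$\Pr\bigl(Y\ge\sigma t/2\bigr)\ge\frac{\sigma t/2}{M}\ge\frac{\sigma t/2}{3t}=\frac{\sigma}{6}.$$
Since $t\le c_0l_s$ forces $\min\set{t,c_0l_s}=t$, we have $\sigma t/2\ge(\sigma/8)\min\set{t,c_0l_s}$; and $\sigma/6\ge\sigma c_0/8$ because $c_0<1$. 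This establishes the lemma in this case.

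For the remaining case $t>c_0l_s$, the target becomes $\Pr(|X_t^{(s)}|_{\Gamma_s}\ge\sigma c_0l_s/8)\ge\sigma c_0/8$. Here I would decompose $X_t^{(s)}=X_{t_0}^{(s)}\cdot Y$ with $t_0=c_0l_s$ and $Y\sim\eta_s^{*(t-t_0)}$ independent of $X_{t_0}^{(s)}$; the previous case applied at $t_0$ already gives $\Pr(|X_{t_0}^{(s)}|_{\Gamma_s}\ge\sigma c_0l_s/8)\ge\sigma c_0/8$, and the hypothesis $\sigma c_0l_s\ge 4$ keeps the integer-valued threshold $\sigma c_0l_s/8\ge 1/2$ meaningful. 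The main obstacle will be transferring the bound from $X_{t_0}^{(s)}$ to $X_t^{(s)}$: the naive triangle estimate $|X_t^{(s)}|\ge|X_{t_0}^{(s)}|-|Y|$ loses too much once $|Y|$ is comparable to $|X_{t_0}^{(s)}|$. One would like to argue that the tail probability $\Pr(|X_t^{(s)}|\ge R)$ is non-decreasing (up to constants) in $t$ for the symmetric walk driven by $\eta_s$---a statement that is clean in the abelian case via convexity of $|\cdot|$ (giving $\E|gY|\ge|g|$ for symmetric $Y$), but subtler here because $\Gamma_s$ is non-abelian. Carrying this through likely requires either a coupling argument or an extension of the linear-speed estimate to the range $t\ge c_0l_s$ that exploits the switch-walk-switch structure of $\eta_s$ together with the symmetry of the subgroup-supported measures $U_{A(s)}$ and $U_{B(s)}$.
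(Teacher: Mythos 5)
Your Case~1 argument ($t\le c_0 l_s$) is correct and essentially the same reverse-Markov computation as in the paper; you are, if anything, a bit more careful by using the genuine upper bound $|X_t^{(s)}|\le 3t$ (each $\eta_s$-step is a product of three generators), whereas the paper tacitly uses $|X_t^{(s)}|\le t$. Either way the arithmetic closes since $c_0<1$.

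Case~2 ($t>c_0 l_s$) has a genuine gap, and you have correctly located it but not closed it. The transfer you propose (decompose $X_t^{(s)}=X_{t_0}^{(s)}\cdot Y$ and try to move the \emph{tail} bound forward in time) is indeed problematic for a non-abelian group. The paper does not transfer a tail probability; it transfers the \emph{expectation}, using the near-monotonicity of speed for symmetric random walks on groups: by \cite[Lemma~4.1]{leeper}, $\E|X_t^{(s)}|_{\Gamma_s}\ge\tfrac12\E|X_{s}^{(s)}|_{\Gamma_s}-1$ for all $t\ge s$. Applying this at $s=c_0l_s$ gives $\E|X_t^{(s)}|\ge\tfrac{\sigma}{2}c_0l_s-1\ge\tfrac{\sigma c_0 l_s}{4}$, where the hypothesis $\sigma c_0 l_s\ge 4$ (which you noticed but did not find a use for) is precisely what absorbs the additive $-1$ loss. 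With this lower bound on the mean in hand, one runs the same reverse-Markov step as in Case~1, now with the diameter bound $|X_t^{(s)}|\le l_s$ as the a.s.\ upper bound: setting $p=\Pr\bigl(|X_t^{(s)}|\ge\tfrac{\sigma c_0 l_s}{8}\bigr)$,
\[
\frac{\sigma c_0 l_s}{4}\le\E|X_t^{(s)}|\le(1-p)\frac{\sigma c_0 l_s}{8}+p\,l_s\le\frac{\sigma c_0 l_s}{8}+p\,l_s,
\]
which gives $p\ge\tfrac{\sigma c_0}{8}$. So the missing idea is exactly the ``extension of the linear-speed estimate'' you speculated about: it is the general Lee--Peres lemma that $\E|X_t|$ is approximately non-decreasing for any symmetric finitely supported walk, and it applies to $\eta_s^{*t}$ since $\eta_s$ is symmetric. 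You should cite and invoke that lemma rather than attempting a coupling or a tail-monotonicity argument.
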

\begin{proof}
Assume first that $t\leq c_0l_s$. Let $p=\Pr\left(\left|X_t^{(s)}\right|_{\Gamma_s}\ge\frac{\sigma}{2} t\right)$. Then
$$\sigma t\leq\E\left|X_t^{(s)}\right|_{\Gamma_s}\leq (1-p)\frac{\sigma}{2} t+pt\leq\frac{\sigma}{2}t+pt,$$
so $p\ge\frac{\sigma}{2}$, and the assertion follows.

Now, let $t\ge c_0l_s$. By \cite[Lemma 4.1]{leeper}, we have
$$\E\left|X_t^{(s)}\right|_{\Gamma_s}\ge\frac{1}{2}\E\left|X_{c_0l_s}^{(s)}\right|_{\Gamma_s}-1 \ge\frac{\sigma}{2}c_0l_s-1.$$
If $\sigma c_0l_s\ge 4$, we therefore have
$$\E\left|X_t^{(s)}\right|\ge\frac{\sigma c_0l_s}{4}.$$
Let $p=\Pr\left(\left|X_t^{(s)}\right|\ge\frac{\sigma c_0l_s}{8}\right)$. Then
$$\frac{\sigma c_0l_s}{4}\leq\E\left|X_t^{(s)}\right|\leq(1-p)\frac{\sigma c_0l_s}{8}+pl_s\leq\frac{\sigma c_0l_s}{8}+pl_s.$$
We therefore have $p\ge\frac{\sigma c_0}{8}$, as required.
\end{proof}

We are now ready to prove our lower bound on the metric in a single layer:
\begin{prop}\label{prop:metric-lower-bound}
Let $\set{\Gamma_s}$ be a sequence of finite groups, let $l_s=\diam(\Gamma_s)$, and suppose that $\set{\Gamma_s}$ satisfies $(\sigma,c_0l_s)$-linear speed assumption for some constants $\sigma,c_0>0$. Also suppose there is $m_0>1$ such that
$$k_{s+1}>2k_s\;\;\;\textrm{and}\;\;\;l_{s+1}\ge m_0l_s$$
for all $s$. Then, almost surely, for large enough~$n$, for all $s$ such that $k_s\leq\frac{d_2\sqrt{n}}{\sqrt{\log\log n}}$ we have
$$\left|W_n\right|_{\Delta_s}\ge\frac{\sigma c_0}{16}\sum_{x\in\Z}\min\set{T(k_s,x,n),c_0l_s}.$$
\end{prop}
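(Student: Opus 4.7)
The plan is to condition on the base $\Z$-trajectory $(S_t)_{t \leq n}$, extract a probabilistic pointwise lower bound on each lamp value $f_n(x) \in \Gamma_s$ via the linear speed assumption, and then aggregate the resulting conditionally independent contributions; a Borel--Cantelli argument over dyadic scales removes the dependence on a fixed pair $(n,s)$.

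Starting from the trivial bound $|W_n|_{\Delta_s} \geq \sum_{x \in \Z} |f_n(x)|_{\Gamma_s}$, I would condition on $S$ and analyze $f_n(x)$ one site at a time. The switch-walk-switch structure of $q$ together with the fact that $A(s)$ and $B(s)$ are subgroups carrying uniform step measures allows one to reorganize the depositions contributing to site $x$ into $T(k_s,x,n)$ independent $\eta_s$-steps---one per completed $k_s$-excursion---plus a bounded boundary term; this is the same mechanism underlying the speed lower bound in \cite{brzh}, and it identifies $|f_n(x)|_{\Gamma_s}$ distributionally with the displacement of an $\eta_s$-walk of length $T(k_s,x,n)$ up to a harmless $O(l_s)$ defect. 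Applying \Lref{lem:prob-in-expand} then yields, conditionally on $S$,
$$\Pr\bigl(|f_n(x)|_{\Gamma_s} \geq V_x \,\bigm|\, S\bigr) \;\geq\; \frac{\sigma c_0}{8}, \qquad V_x := \frac{\sigma}{8}\min\set{T(k_s,x,n),\,c_0 l_s},$$
valid for all but finitely many layers $s$ (those with $\sigma c_0 l_s \geq 4$).

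Conditionally on $S$, the values $\{f_n(x)\}_{x \in \Z}$ use disjoint collections of the i.i.d.\ switching elements and are therefore independent. Writing $\xi_x := \ind{|f_n(x)|_{\Gamma_s} \geq V_x}$, the independent random variables $\xi_x V_x$ take values in $[0,\,\sigma c_0 l_s/8]$ and have conditional mean at least $(\sigma c_0/8)\,V_x$, so a Bernstein inequality for weighted Bernoulli sums gives
$$\Pr\Bigl(\sum_{x} \xi_x V_x < \tfrac{1}{2}\cdot\tfrac{\sigma c_0}{8}\sum_x V_x \,\Bigm|\, S\Bigr) \;\leq\; \exp\!\left(-c\,\frac{\sum_x V_x}{l_s}\right)$$
for a universal $c>0$. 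By \Pref{prop:chop-sum-as-bound} the quantity $\sum_x V_x / l_s$ grows at least like a positive power of $n$ on a full-measure event and for every valid layer, so the conditional failure probability is polynomially small in $n$. Summing this over the $O(\log t_m)$ valid layers $s$ at each dyadic time $t_m = 2^m$ and invoking Borel--Cantelli delivers the desired inequality almost surely for all but finitely many $m$ and every valid $s$; a monotonicity/sandwich argument identical in spirit to those used in the proofs of \Pref{prop:bound-Tkn-as} and \Pref{prop:max-ex-fin} then interpolates from dyadic times $t_m$ to arbitrary $n$, at the price of a universal constant absorbed into the final $\sigma c_0/16$.

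The main obstacle is the distributional identification underlying the second paragraph: one must pin down precisely how the interleaved $A(s)$- and $B(s)$-depositions at site $x$ during one completed $k_s$-excursion assemble into a single $\eta_s$-step, and that these blocks are independent across excursions and across sites, even though their temporal order is dictated by the trajectory and need not literally match an $A{\cdot}B{\cdot}A$ concatenation. Once that combinatorial bookkeeping is in place, the Bernstein-plus-Borel--Cantelli machinery is routine; the only delicate point in the aggregation step is to track constants carefully so that the concentration estimate remains summable after the union bound over all $O(\log n)$ valid layers, without introducing any circular dependence on the very bound being proved.
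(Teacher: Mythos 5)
Your plan follows the paper's architecture almost exactly: bound $|W_n|_{\Delta_s}$ below by $\sum_x|f_n(x)|_{\Gamma_s}$, condition on the $\Z$-trajectory so that the lamp values become conditionally independent, invoke \Lref{lem:prob-in-expand} to get $\Pr(|f_n(x)|_{\Gamma_s}\ge V_x\mid S)\ge\sigma c_0/8$ with $V_x=\frac{\sigma}{8}\min\{T(k_s,x,n),c_0l_s\}$, and then apply a weighted-Bernoulli concentration bound (the paper uses \cite[Lemma 2]{rev}; Bernstein serves the same role) followed by a union bound over layers and Borel--Cantelli. Your dyadic-scales-plus-interpolation step is a cosmetic variant of the paper's device of conditioning on the almost-sure events $A_m$ and applying Borel--Cantelli directly over $n$; either works.

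There is, however, a genuine gap in the concentration step. You bound each summand $\xi_x V_x$ by $\sigma c_0 l_s/8$, which gives a tail exponent of order $\sum_x V_x/l_s$, and you claim this grows like a positive power of $n$ by \Pref{prop:chop-sum-as-bound}. That claim fails when $l_s$ is much larger than $\max_x T(k_s,x,n)$: in that regime the truncation at $c_0l_s$ is inactive, so $\sum_x V_x\simeq n/k_s$ and the exponent becomes $\simeq n/(k_sl_s)$, which need not diverge. The proposition places no upper bound on $l_s$ (only $k_s\le d_2\sqrt{n}/\sqrt{\log\log n}$), and the construction permits $k_sl_s$ to exceed $n$, so the bound can stall at a constant. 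The fix is precisely what the paper does: also invoke \Pref{prop:max-ex-fin} to obtain, almost surely for large $n$,
$$\max_x V_x\;\lesssim\;\min\left\{\frac{\sqrt{n\log\log n}}{k_s},\,l_s\right\},$$
and use this sharper quantity, rather than $l_s$, as the sup-bound in the Bernstein/Revelle inequality. With that change the exponent becomes $\sum_x V_x/\max_x V_x$, and a short case analysis (comparing $l_s$ with $\sqrt{n\log\log n}/k_s$) shows it is always at least of order $\sqrt{n/\log\log n}$, uniformly over $l_s$. That is summable after the $O(\log n)$ union bound over valid layers, and the rest of your argument then goes through.
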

\begin{proof}
We use a similar approach to Revelle \cite{rev}. It is trivial that for all $n,s$,
$$\left|W_n\right|_{\Delta_s}\ge\frac{1}{2}\sum_{x\in\Z}\left|f_n(x)\right|_{\Gamma_s}.$$
Let $I_{x,n}$ denote the indicator of whether the random walk on $\Z$ visited~$x$ before time $n$. Conditioning on $T(k_s,x,n)$ and $I_{x,n}$ for all $x$, the random variables $\left|f_n(x)\right|_{\Gamma_s}$ are independent.

Let $\theta_x=\frac{\sigma}{8}\min\set{T(k_s,x,n),c_0l_s}$, and write $Z_x=\left|f_n(x)\right|_{\Gamma_s}\theta_x^{-1}$. By \Lref{lem:prob-in-expand}, $\Pr\left(Z_x\ge 1\right)\ge\frac{\sigma c_0}{8}$, so we may apply \cite[Lemma 2]{rev} to get
\begin{align}\label{eq:metric-lower-bound-eq1}
&\Pr\left(\sum_{x\in\Z}\left|f_n(x)\right|_{\Gamma_{s}}<\frac{\sigma c_0}{16}\sum_{x\in\Z}\theta_x\suchthat T(k_{s},x,n),I_{x,n}\right)=\\ &=\Pr\left(\sum_{x\in\Z}\frac{\left|f_n(x)\right|_{\Gamma_{s}}}{\theta_x}\frac{\theta_x}{\max_x\theta_x}< \frac{\sigma c_0}{16}\sum_{x\in\Z}\frac{\theta_x}{\max_x\theta_x}\suchthat T(k_{s},x,n),I_{x,n}\right)\leq\notag\\
&\leq\exp\left(-\frac{\sigma c_0}{96\max_x\theta_x}\sum_{x\in\Z}\theta_x\suchthat T(k_s,x,n),I_{x,n}\right).\notag
\end{align}

Let $d_2,c,c_2>0$ be constants such that \Pref{prop:max-ex-fin} and \Pref{prop:chop-sum-as-bound} hold. For each $m$, let $A_m$ denote the event that for all $n\ge m$ and for all $s$ with $k_s\leq\frac{d_2\sqrt{n}}{2\sqrt{\log\log n}}$,
$$\max_{x\in\Z}\theta_x\leq\min\set{\frac{c\sqrt{n\log\log n}}{k_s},l_s}$$
and
$$\sum_{x\in\Z}\theta_x\ge c_2\min\set{\frac{n}{k_s},\frac{\sqrt{n}}{\sqrt{\log\log n}}l_s}\ge\frac{c'\sqrt{n}}{\sqrt{\log\log n}}\max_{x\in\Z}\theta_x.$$
We have $\Pr\left(\bigcup_{m=1}^{\infty}A_m\right)=1$. Also, let $B_n$ denote the event that for all~$s$ with $k_s\leq\frac{d_2\sqrt{n}}{2\sqrt{\log\log n}}$,
$$\sum_{x\in\Z}\left|f_{n}(x)\right|_{\Gamma_{s}}<\frac{\sigma}{4}\sum_{x\in\Z}\theta_x.$$

Conditioning on $A_m$, \eqref{eq:metric-lower-bound-eq1} shows that
$$\Pr\left(\sum_{x\in\Z}\left|f_{n}(x)\right|_{\Gamma_{s}}<\frac{\sigma}{4}\sum_{x\in\Z}\theta_x\suchthat A_m\right)\leq \exp\left(-\frac{c'\sqrt{n}}{\sqrt{\log\log n}}\right)$$
for all $n\ge m$. As $\set{k_s}$ increases exponentially, we may take a union bound to see
$$\Pr\left(B_n\suchthat A_m\right)\leq c''\log n\exp\left(-\frac{c'\sqrt{n}}{\sqrt{\log\log n}}\right)$$
for all $n\ge m$ and a constant $c''$. The latter expression is summable, so the Borel Cantelli Lemma shows that for each $m$, conditioning on~$A_m$ we have that $\Pr\left(\liminf B_n^c\suchthat A_m\right)=1$. But as $\Pr\left(\bigcup_{m=1}^{\infty}A_m\right)=1$, we must have $\Pr\left(\liminf B_n^c\right)=1$, as required.
\end{proof}

\section{Bounds on the distance in a single layer}\label{sec:bounds-sing-layer}

We use the results obtaines in the previous section to bound the distance of the random walk in each layer separately. As explained before, for a given $s$, the number of steps that the random walk on $\Delta_s=\Gamma_s\wr\Z$ makes in the copy of the lamp group $\Gamma_s$ at any given point $x\in\Z$  until time $n$ is $T(k_s,x,n)$, and the distance in each copy of $\Gamma_s$ is also bounded from above by $l_s$.

When $s$ is small, we expect to reach saturation in $\Gamma_s$ relatively fast, so the effective bound should be $l_s$. However, when $s$ is big, the number of excursions can be small compared to $l_s$, so the bound should depend on $T(k_s,x,n)$. We are therefore looking for the layers which will separate these two cases.

To do this, let $d_1,d_2,d_3$ be the constants from \Pref{prop:bound-Tkn-as}, \Pref{prop:chop-sum-as-bound} and \Cref{cor:chop-sum}. Take a constant $r\leq\frac{1}{2}\min\set{d_1,d_2,d_3}$, and consider the following layers:
\begin{align*}
s_0'(n)&=\max\set{s\ge 0\suchthat k_s\leq\frac{r\sqrt{n}}{\sqrt{\log\log n}}},\\
s_2(n)&=\max\set{s\ge 0\suchthat k_sl_s\leq\frac{r\sqrt{n}}{\sqrt{\log\log n}}},\\
s_3(n)&=\max\set{s\ge 0\suchthat k_sl_s\leq\sqrt{n\log\log n}},\\
\tilde{s}_3(n)&=\min\set{s_0'(n),s_3(n)}.
\end{align*}
We will prove that the dominating layer for the $\limsup$ is $s_2(n)$, whereas the dominating layer for the $\liminf$ is $\tilde{s}_3(n)$.

\subsection{Upper bounds}

\begin{prop}\label{prop:limsup-layer-upper-bound}
There is a universal constants $C>0$ such that almost surely, for large enough $n$ and all $s\ge 0$,
$$\left|W_n\right|_{\Delta_s}\leq\begin{cases}C\sqrt{n\log\log n}\, l_s,&s\leq s_2(n)\\ \frac{Cn}{k_s},&s_2(n)<s\leq s_0'(n)\\ C\sqrt{n\log\log n},&s>s_0'(n)\end{cases}.$$
\end{prop}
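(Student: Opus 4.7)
The plan is to apply \Pref{prop:up-dist-bound} to each layer $s$, yielding two upper bounds on $|W_n|_{\Delta_s}$:
\[ |W_n|_{\Delta_s}\leq 11|\range(S_n)|\, l_s, \qquad |W_n|_{\Delta_s}\leq 22\,T(k_s/2,n)+11|\range(S_n)|, \]
the latter after rewriting $\sum_{j\in\Z}k_s T(k_s/2,j\cdot k_s/2,n)$ using the definition $T(k,n)=k\sum_{x\in\Z}T(k,kx,n)$. The classical Law of the Iterated Logarithm guarantees $|\range(S_n)|\leq C_{\mathrm{rng}}\sqrt{n\log\log n}$ almost surely for all but finitely many $n$, and each case of the proposition corresponds to choosing whichever of the two bounds is smaller.

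For $s\leq s_2(n)$, I would use the first bound directly to obtain $|W_n|_{\Delta_s}\leq 11C_{\mathrm{rng}}\sqrt{n\log\log n}\,l_s$, since the hypothesis $k_s l_s\leq r\sqrt{n}/\sqrt{\log\log n}$ makes this bound the binding one. For $s_2(n)<s\leq s_0'(n)$ I would use the second bound: since $r\leq d_2/2$, the hypothesis $k_s\leq r\sqrt{n}/\sqrt{\log\log n}$ ensures $k_s/2\leq d_2\sqrt{n}/(2\sqrt{\log\log n})$, so \Pref{prop:bound-Tkn-as} gives $T(k_s/2,n)\leq 2C_1 n/k_s$ almost surely eventually. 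Moreover, the same hypothesis implies $\sqrt{n\log\log n}\leq (1/r)\cdot n/k_s$, so the range contribution is absorbed and $|W_n|_{\Delta_s}\lesssim n/k_s$.

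For $s>s_0'(n)$, \Pref{prop:bound-Tkn-as} need not apply to $k_s/2$ directly, so I would combine it with \Lref{lem:ex-monotone}: pick the largest $k$ satisfying both $2k\leq k_s/2$ and $k\leq d_2\sqrt{n}/(2\sqrt{\log\log n})$, obtaining $T(k_s/2,n)\leq 3T(k,n)\leq 3C_1 n/k$. When the first constraint binds one has $k\simeq k_s/4$ with $k_s<2d_2\sqrt{n}/\sqrt{\log\log n}$, so $n/k_s\lesssim\sqrt{n\log\log n}$ because $k_s>r\sqrt{n}/\sqrt{\log\log n}$; when the second binds, $n/k\simeq\sqrt{n\log\log n}$ by construction. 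Either way $T(k_s/2,n)\lesssim\sqrt{n\log\log n}$, and the range term is of the same order, giving $|W_n|_{\Delta_s}\lesssim\sqrt{n\log\log n}$. The main subtlety lies precisely in this third case, where $k_s$ can sit in the borderline region just above $r\sqrt{n}/\sqrt{\log\log n}$; the near-monotonicity supplied by \Lref{lem:ex-monotone}, together with the cushion built into the choice $r\leq\frac{1}{2}\min\{d_1,d_2,d_3\}$, provides exactly the needed slack to interpolate. No additional union bound over $s$ is required, since the almost-sure events invoked (the range LIL and \Pref{prop:bound-Tkn-as}) already hold uniformly in $k$.
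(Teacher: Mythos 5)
Your proposal is correct and follows essentially the same strategy as the paper: it applies \Pref{prop:up-dist-bound} together with the classical LIL bound on $|\range(S_n)|$, uses \Pref{prop:bound-Tkn-as} to control $T(k_s/2,n)$ in the middle regime, and combines \Lref{lem:ex-monotone} with \Pref{prop:bound-Tkn-as} in the regime $s>s_0'(n)$. Your case-3 argument is in fact written a little more carefully than the paper's (which slips into bounding $T(k_s,n)$ where $T(k_s/2,n)$ is the quantity actually appearing in the layer estimate), but after adjusting constants the two arguments are the same.
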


\begin{proof}
By \Pref{prop:up-dist-bound}, we have
$$|W_n|_{\Delta_s}\leq 11\min\set{T\left(\frac{k_s}{2},n\right)+\left|\range(S_n)\right|,\left|\range(S_n)\right|\, l_s}$$
for all $s\ge 0$. Also, by the classical Law of Iterated Logarithm, for large enough $n$ we have
\begin{equation}\label{eq:limsup-cl-LIL}
\left|\range(S_n)\right|\leq 2\sqrt{n\log\log n}.
\end{equation}
The case $s\leq s_2(n)$ follows from $|W_n|_{\Delta_s}\leq 11\cdot \left|\range(S_n)\right|\, l_s$ and \eqref{eq:limsup-cl-LIL}.

We move to the case where $s_2(n)<s\leq s_0'(n)$. By \Pref{prop:bound-Tkn-as}, almost surely for large enough $n$ and for all $s\leq s'_0(n)$,
$$T\left(\frac{k_s}{2},n\right)\leq\frac{C_1n}{k_s/2}=\frac{2C_1n}{k_s}.$$
Also, by \eqref{eq:limsup-cl-LIL}, for large enough $n$ we have
$$\left|\range(S_n)\right|\leq 2\sqrt{n\log\log n}\leq\frac{2rn}{k_s},$$
proving the second case.

For the case where $s>s'_0(n)$, as $k_s>\frac{r\sqrt{n}}{\sqrt{\log\log n}}$ we may apply \Lref{lem:ex-monotone} and \Pref{prop:bound-Tkn-as}:
$$T(k_s,n)\leq 3T\left(\frac{r\sqrt{n}}{2\sqrt{\log\log n}},n\right)\leq\frac{3C_1n}{\frac{r\sqrt{n}}{2\sqrt{\log\log n}}}=\frac{6C_1}{r}\sqrt{n\log\log n}.$$
Now the bound on $|W_n|_{\Delta_s}$ follows from \eqref{eq:limsup-cl-LIL}.
\end{proof}

\begin{prop}\label{prop:liminf-layer-upper-bound}
There is a universal constant $C>0$ such that almost surely, for all large enough $n$ for which $\left|\range(S_n)\right|\leq\frac{2\sqrt{n}}{\sqrt{\log\log n}}$ and all $s\ge 0$,
$$\left|W_n\right|_{\Delta_s}\leq\begin{cases}\frac{C\sqrt{n}}{\sqrt{\log\log n}}\, l_s,&s\leq \tilde{s}_3(n)\\ \frac{Cn}{k_s},&\tilde{s}_3(n)<s\leq s_0(n)\end{cases}.$$
\end{prop}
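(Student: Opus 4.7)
The plan is to bound $|W_n|_{\Delta_s}$ via the upper bound of \Pref{prop:up-dist-bound}, namely $|W_n|_{\Delta_s}\leq 11\min\set{2T\left(\frac{k_s}{2},n\right)+|\range(S_n)|,\,|\range(S_n)|\,l_s}$, and treat the two regimes $s\leq\tilde s_3(n)$ and $\tilde s_3(n)<s\leq s_0(n)$ separately.

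For $s\leq\tilde s_3(n)$, I would use the second term in the minimum: $|W_n|_{\Delta_s}\leq 11|\range(S_n)|\,l_s$. The range hypothesis immediately yields $|W_n|_{\Delta_s}\leq 22\frac{\sqrt n}{\sqrt{\log\log n}}\,l_s$, which is the desired bound.

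For $\tilde s_3(n)<s\leq s_0(n)$ I would use the first term, $|W_n|_{\Delta_s}\leq 22\,T(k_s/2,n)+11|\range(S_n)|$. Since $s\leq s_0(n)$ forces $k_s\leq|\range(S_n)|\leq\frac{2\sqrt n}{\sqrt{\log\log n}}$, for large $n$ we have $|\range(S_n)|\leq\frac{Cn}{k_s}$ (because $k_s\ll\sqrt{n\log\log n}$), so this term is absorbed. The bulk of the work is to show $T(k_s/2,n)\leq\frac{Cn}{k_s}$ almost surely for all large $n$; for this I split on $s_0'(n)$. If $s\leq s_0'(n)$ then $k_s/2\leq\frac{d_2\sqrt n}{2\sqrt{\log\log n}}$, so \Pref{prop:bound-Tkn-as} applies to $k_s/2$ directly and gives $T(k_s/2,n)\leq\frac{2C_1n}{k_s}$. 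If instead $s>s_0'(n)$ then $k_s>\frac{r\sqrt n}{\sqrt{\log\log n}}$, and I would follow the strategy in the proof of \Pref{prop:limsup-layer-upper-bound}: apply \Lref{lem:ex-monotone} to bound $T(k_s/2,n)$ by a constant multiple of $T(k,n)$ for the auxiliary $k=\frac{r\sqrt n}{4\sqrt{\log\log n}}$ (which satisfies $2k\leq k_s/2$ thanks to the lower bound on $k_s$, and $k\leq\frac{d_2\sqrt n}{2\sqrt{\log\log n}}$ thanks to $r\leq d_2/2$), and then invoke \Pref{prop:bound-Tkn-as} to conclude $T(k_s/2,n)\lesssim\sqrt{n\log\log n}$; this is in turn at most $\frac{Cn}{k_s}$ since $k_s\leq\frac{2\sqrt n}{\sqrt{\log\log n}}$ implies $\frac{n}{k_s}\geq\frac{1}{2}\sqrt{n\log\log n}$.

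The main obstacle is the narrow subcase $s_0'(n)<s\leq s_0(n)$, where $k_s$ lies in the window $(\frac{r\sqrt n}{\sqrt{\log\log n}},\frac{2\sqrt n}{\sqrt{\log\log n}}]$ and \Pref{prop:bound-Tkn-as} does not apply to $k_s/2$ itself; everything else is either a direct application of the range hypothesis or of \Pref{prop:bound-Tkn-as}. The monotonicity of \Lref{lem:ex-monotone} is essential there, and the built-in choice $r\leq\frac{1}{2}\min\set{d_1,d_2,d_3}$ is precisely what lets us pick an auxiliary $k$ satisfying both applicability conditions at once.
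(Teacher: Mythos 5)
Your proof is correct and follows essentially the same route as the paper: bound via \Pref{prop:up-dist-bound}, use the range hypothesis directly for $s\leq\tilde s_3(n)$, apply \Pref{prop:bound-Tkn-as} for $\tilde s_3(n)<s\leq s_0'(n)$, and for the narrow window $s_0'(n)<s\leq s_0(n)$ pass through \Lref{lem:ex-monotone} with a smaller auxiliary scale before invoking \Pref{prop:bound-Tkn-as}. You are in fact a touch more careful than the paper's write-up in the last subcase, where the paper bounds $T(k_s,n)$ rather than the $T(k_s/2,n)$ that actually appears in the upper bound of \Pref{prop:up-dist-bound}; your choice of auxiliary $k=\frac{r\sqrt n}{4\sqrt{\log\log n}}$ fixes this cleanly.
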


\begin{proof}
By \Pref{prop:up-dist-bound}, we have
$$|W_n|_{\Delta_s}\leq 11\min\set{T\left(\frac{k_s}{2},n\right)+\left|\range(S_n)\right|,\left|\range(S_n)\right|\, l_s}$$
for all $s\ge 0$. Also, by our assumption on $n$, $\left|\range(S_n)\right|\leq\frac{2\sqrt{n}}{\sqrt{\log\log n}}$, so the case $s\leq\tilde{s}_3(n)$ follows automatically.

We move to the case where $\tilde{s}_3(n)<s\leq s_0'(n)$. By \Pref{prop:bound-Tkn-as}, almost surely for large enough $n$ and for all $s\leq s'_0(n)$,
$$T\left(\frac{k_s}{2},n\right)\leq\frac{C_1n}{k_s/2}=\frac{2C_1n}{k_s}.$$
In addition,
$$\left|\range(S_n)\right|\leq \frac{2\sqrt{n}}{\sqrt{\log\log n}}\leq\frac{2rn}{k_s},$$
proving part of the second case.

For the case where $s'_0(n)<s\leq s_0(n)$, as $\frac{r\sqrt{n}}{\sqrt{\log\log n}}<k_s\leq\frac{2\sqrt{n}}{\sqrt{\log\log n}}$ we may apply \Lref{lem:ex-monotone} and \Pref{prop:bound-Tkn-as}:
\begin{align*}
T(k_s,n)&\leq 3T\left(\frac{r\sqrt{n}}{2\sqrt{\log\log n}},n\right)\leq\frac{C_1n}{\frac{r\sqrt{n}}{2\sqrt{\log\log n}}}=\\
&=\frac{2C_1}{r}\sqrt{n\log\log n}<\frac{2rn}{k_s}.
\end{align*}
The conclusion follows.
\end{proof}

\subsection{Lower bounds}

\begin{prop}\label{prop:liminf-layer-lower-bound}
There is a universal constant $c>0$ such that almost surely, for all but finitely many $n$,
$$\left|W_n\right|_{\Delta_s}\ge\begin{cases}\frac{c\sqrt{n}}{\sqrt{\log\log n}}l_s,&s\leq \tilde{s}_3(n)\\\frac{cn}{k_s},&\tilde{s}_3(n)<s\leq s_0'(n).\end{cases}$$
\end{prop}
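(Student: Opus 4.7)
The plan is to combine the single layer lower bound of \Pref{prop:metric-lower-bound} with the almost-sure concentration statement in \Pref{prop:chop-sum-as-bound}, and then split into cases according to which term in the resulting minimum is active. Recall from the definitions that $r \le \frac{1}{2}\min\{d_1,d_2,d_3\}$, so whenever $s \le s_0'(n)$ we have $k_s \le \frac{r\sqrt{n}}{\sqrt{\log\log n}} \le \frac{d_2\sqrt{n}}{2\sqrt{\log\log n}}$, which is exactly the range in which the two propositions apply.

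First I would observe that \Pref{prop:metric-lower-bound}, applied with the constants $\sigma, c_0$ from the linear speed assumption, gives some universal $c' > 0$ such that almost surely, for all large enough $n$ and all $s \le s_0'(n)$,
$$\left|W_n\right|_{\Delta_s} \ge c' \sum_{x\in\Z}\min\!\set{T(k_s,x,n),\, c_0 l_s}.$$
Then \Pref{prop:chop-sum-as-bound}, applied with $k = k_s$ and $l = c_0 l_s$, yields a universal $c'' > 0$ so that almost surely, for all but finitely many $n$ and all $s \le s_0'(n)$,
$$\left|W_n\right|_{\Delta_s} \ge c'' \min\!\set{\frac{n}{k_s},\; \frac{\sqrt{n}}{\sqrt{\log\log n}}\, l_s},$$
where the factor $c_0$ is absorbed into $c''$.

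The proof then reduces to identifying which term in this minimum is the smaller one in each case. The ratio of the two terms is
$$\frac{\sqrt{n}/\sqrt{\log\log n}\cdot l_s}{n/k_s} = \frac{k_s l_s}{\sqrt{n\log\log n}},$$
so the term $\frac{\sqrt{n}}{\sqrt{\log\log n}} l_s$ is smaller precisely when $k_s l_s \le \sqrt{n\log\log n}$, i.e.\ precisely when $s \le s_3(n)$. Thus:

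\begin{itemize}
\item If $s \le \tilde{s}_3(n) = \min\{s_0'(n), s_3(n)\}$, then both $s \le s_0'(n)$ (so the combined bound applies) and $s \le s_3(n)$ (so the minimum equals $\frac{\sqrt{n}}{\sqrt{\log\log n}} l_s$), yielding the first case.
\item If $\tilde{s}_3(n) < s \le s_0'(n)$, then necessarily $\tilde{s}_3(n) = s_3(n)$ (otherwise $\tilde{s}_3(n) = s_0'(n)$ would force $s > s_0'(n)$, contradicting the upper bound on $s$), so $s > s_3(n)$ and the minimum equals $\frac{n}{k_s}$, yielding the second case.
\end{itemize}

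There is no real obstacle here since the heavy lifting is done in \Pref{prop:metric-lower-bound} and \Pref{prop:chop-sum-as-bound}; the only thing to verify carefully is that the two almost-sure ``all but finitely many $n$'' statements can be intersected (which is immediate, as a countable intersection of probability-one events has probability one), and that the case split using $\tilde{s}_3(n)$ matches the regimes of the minimum, as above. Taking $c$ to be a suitable constant smaller than $c''$ completes the proof.
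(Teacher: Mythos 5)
Your proof is correct and matches the paper's approach exactly: combine Proposition~\ref{prop:metric-lower-bound} with Proposition~\ref{prop:chop-sum-as-bound}, then sort out which term of the resulting minimum wins according to whether $k_s l_s$ is above or below $\sqrt{n\log\log n}$. The paper's own proof is just two sentences and leaves the case analysis, the handling of the $c_0$ factor inside the min, and the intersection of the two almost-sure events implicit; you have simply spelled those routine steps out.
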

\begin{proof}
Using \Pref{prop:metric-lower-bound}, for large enough $n$ we have
$$\left|W_n\right|_{\Delta_s}\ge c\sum_{x\in\Z}\min\set{T(k_s,x,n),l_s}$$
for all $s\leq s_0'(n)$. The result follows from \Pref{prop:chop-sum-as-bound}.
\end{proof}

\begin{prop}\label{prop:limsup-layer-lower-bound}
There is a universal constant $c>0$ such that almost surely, for infinitely many $n$,
$$\left|W_n\right|_{\Delta_s}\ge\begin{cases}c\sqrt{n\log\log n}\, l_s,&s\leq s_2(n)\\\frac{cn}{k_s},&s_2(n)<s\leq s_0'(n).\end{cases}$$
\end{prop}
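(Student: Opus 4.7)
The strategy is to combine the single-layer metric lower bound of \Pref{prop:metric-lower-bound} with the ``infinitely often'' lower bound on the truncated excursion sum provided by \Cref{cor:chop-sum}. Applying \Pref{prop:metric-lower-bound} with truncation at $c_0 l_s$ gives, almost surely for all sufficiently large $n$ and all $s$ with $k_s \leq \frac{d_2\sqrt{n}}{\sqrt{\log\log n}}$,
\[
\left|W_n\right|_{\Delta_s} \;\ge\; \frac{\sigma c_0}{16}\sum_{x\in\Z}\min\set{T(k_s,x,n),\, c_0 l_s}.
\]
Call the event that this holds for every such $s$ the event $F_n$. Then $F_n$ occurs for all sufficiently large $n$, almost surely. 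Separately, by \Cref{cor:chop-sum} applied with $l = c_0 l_s$, there is almost surely an infinite set $E$ of times $n$ for which
\[
\sum_{x\in\Z}\min\set{T(k_s,x,n),\, c_0 l_s} \;\ge\; c_3 \min\set{\tfrac{n}{k_s},\, c_0\sqrt{n\log\log n}\,l_s}
\]
for every $k_s \leq \frac{d_3\sqrt{n}}{2\sqrt{\log\log n}}$. Since $r\leq \tfrac12\min\set{d_2,d_3}$, both bounds apply simultaneously whenever $s\leq s_0'(n)$.

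The intersection $E$ of the almost-sure event $\{F_n\text{ eventually}\}$ with the infinite-set event remains almost surely infinite. For every $n \in E$ large enough and every $s\leq s_0'(n)$, chaining the two inequalities yields
\[
\left|W_n\right|_{\Delta_s} \;\ge\; c' \min\set{\tfrac{n}{k_s},\, \sqrt{n\log\log n}\,l_s}
\]
for a universal constant $c' = \frac{\sigma c_0^2 c_3}{16}$.

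It remains to identify which term in the minimum dominates, according to whether $s$ lies below or above the threshold $s_2(n)$. If $s\leq s_2(n)$, then by definition $k_s l_s \leq \frac{r\sqrt{n}}{\sqrt{\log\log n}}$, so
\[
\sqrt{n\log\log n}\, l_s \;=\; \frac{k_s l_s \cdot \sqrt{\log\log n}}{\sqrt{n}}\cdot\frac{n}{k_s} \;\leq\; \frac{rn}{k_s} \;\leq\; \frac{n}{k_s},
\]
and the minimum equals $\sqrt{n\log\log n}\,l_s$, giving the first case. If instead $s_2(n)< s\leq s_0'(n)$, then $k_s l_s > \frac{r\sqrt{n}}{\sqrt{\log\log n}}$, and the same computation (now in reverse) shows $\sqrt{n\log\log n}\,l_s > \frac{rn}{k_s}$, so
\[
\min\set{\tfrac{n}{k_s},\, \sqrt{n\log\log n}\,l_s} \;\ge\; \tfrac{rn}{k_s},
\]
yielding the second case after absorbing $r$ into the constant.

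The only delicate point is the compatibility of the two input statements: \Pref{prop:metric-lower-bound} is an almost-sure statement for all large $n$, while \Cref{cor:chop-sum} is an ``infinitely often'' statement tied to times when $|\range(S_{n/4})|$ is comparable to $\sqrt{n\log\log n}$. Intersecting a full-measure event with an almost-sure infinite set preserves infiniteness, so no subtlety arises; the rest of the argument is bookkeeping with the case split at $s_2(n)$.
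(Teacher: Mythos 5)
Your proposal is correct and follows essentially the same route as the paper's proof, which also combines Proposition~\ref{prop:metric-lower-bound} with Corollary~\ref{cor:chop-sum} and then resolves the minimum according to the position of $s$ relative to $s_2(n)$. You have simply spelled out the case analysis and the intersection-of-events point that the paper leaves implicit.
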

\begin{proof}
Using \Pref{prop:metric-lower-bound}, for large enough $n$ we have
$$\left|W_n\right|_{\Delta_s}\ge c\sum_{x\in\Z}\min\set{T(k_s,x,n),l_s}$$
for all $s\leq s_0'(n)$. The result follows from \Cref{cor:chop-sum}.
\end{proof}

\section{Bounds on the total distance}\label{sec:bounds-total}

We now use the bounds on each layer achieved in the previous section to find a bound on the distance of the random walk on $\Delta$ in terms of the sequences $\set{k_s}$ and $\set{l_s}$. Recall the following bounds from \cite{brzh}:
\begin{equation}\label{eq:total-dist-up-bound}
    \left|W_n\right|_{\Delta}\leq C\sum_{s\leq s_0(n)}\left|W_n\right|_{\Delta_s}
\end{equation}
where $s_0(n)=\min\set{s\ge 0\suchthat k_s\leq\left|\range(S_n)\right|}$, and
\begin{equation}\label{eq:total-dist-low-bound}
    \left|W_n\right|_{\Delta}\ge\left|W_n\right|_{\Delta_s}
\end{equation}
for any $s\ge 0$.

For the $\limsup$, recall the critical layer
$$s_2(n)=\max\set{s\ge 0\suchthat k_sl_s\leq\frac{r\sqrt{n}}{\sqrt{\log\log n}}}$$
and that
$$s'_0(n)=\max\set{s\ge 0\suchthat k_s\leq\frac{r\sqrt{n}}{\sqrt{\log\log n}}}$$
where $r>0$ is some universal constant.

\begin{prop}\label{prop:limsup-s-expr}
Suppose $\set{\Gamma_s}$ satisfy the $(\sigma,c_0l_s)$-linear speed assumption and $\diam(\Gamma_s)\leq C_0l_s$. Suppose there exists a constant $m_0>1$ such that
$$k_{s+1}>2k_s,\; l_{s+1}\ge m_0l_s$$
for all $s$. Let
$$g(n)=\frac{n}{k_{s_2(n)+1}}+\sqrt{n\log\log n}l_{s_2(n)}.$$
Then almost surely,
$$0<\limsup_{n\to\infty}\frac{\left|W_n\right|_{\Delta}}{g(n)}\;\;\textrm{and}\;\;\limsup_{n\to\infty}\frac{\left|W_n\right|_{\Delta}}{g(n)+\sqrt{n\log\log n}\log\log\log n}<\infty.$$
\end{prop}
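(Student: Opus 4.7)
The plan is to handle the two inequalities of the $\limsup$ statement separately, using the per-layer bounds from \Pref{prop:limsup-layer-upper-bound} and \Pref{prop:limsup-layer-lower-bound} together with the summation inequalities \eqref{eq:total-dist-up-bound} and \eqref{eq:total-dist-low-bound} inherited from \cite{brzh}. The key observation is that both $\set{k_s}$ and $\set{l_s}$ grow at least geometrically, so geometric series over ranges of layers are dominated by their endpoints; this is what makes a single ``critical'' layer dictate the answer.

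For the upper bound, I would start from $\left|W_n\right|_\Delta\leq C\sum_{s\leq s_0(n)}\left|W_n\right|_{\Delta_s}$ and partition the sum into the three regimes of \Pref{prop:limsup-layer-upper-bound}. Almost surely for large $n$: on $s\leq s_2(n)$ the bound $\left|W_n\right|_{\Delta_s}\lesssim\sqrt{n\log\log n}\,l_s$ together with $l_{s+1}\ge m_0 l_s$ sums to $\lesssim\sqrt{n\log\log n}\,l_{s_2(n)}$; on $s_2(n)<s\leq s_0'(n)$ the bound $\left|W_n\right|_{\Delta_s}\lesssim n/k_s$ together with $k_{s+1}>2k_s$ sums to $\lesssim n/k_{s_2(n)+1}$; on $s_0'(n)<s\leq s_0(n)$ each term is only $\lesssim\sqrt{n\log\log n}$, but by the classical LIL on $\Z$ we have $\left|\range(S_n)\right|\leq 2\sqrt{n\log\log n}$ almost surely for large $n$, while by definition $k_{s_0'(n)}>\frac{r\sqrt{n}}{2\sqrt{\log\log n}}$, so the number of layers in this third range is at most $\log_2\!\left(\frac{2\sqrt{n\log\log n}}{r\sqrt{n}/(2\sqrt{\log\log n})}\right)+O(1)=O(\log\log\log n)$. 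Adding the three contributions yields $\left|W_n\right|_\Delta\lesssim g(n)+\sqrt{n\log\log n}\log\log\log n$, which is the finite-$\limsup$ half of the statement.

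For the lower bound, I would use \eqref{eq:total-dist-low-bound} and evaluate at two well-chosen layers. By \Pref{prop:limsup-layer-lower-bound} there is an infinite random set of times $n$ at which, simultaneously for every $s\leq s_0'(n)$,
$$\left|W_n\right|_\Delta\ge\left|W_n\right|_{\Delta_{s_2(n)}}\ge c\sqrt{n\log\log n}\,l_{s_2(n)}$$
and, whenever $s_2(n)+1\leq s_0'(n)$, also
$$\left|W_n\right|_\Delta\ge\left|W_n\right|_{\Delta_{s_2(n)+1}}\ge\frac{cn}{k_{s_2(n)+1}}.$$
Taking the maximum gives $\left|W_n\right|_\Delta\ge\frac{c}{2}g(n)$ infinitely often. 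In the degenerate case $s_2(n)+1>s_0'(n)$, by definition $k_{s_2(n)+1}>\frac{r\sqrt{n}}{\sqrt{\log\log n}}$, so $n/k_{s_2(n)+1}<\sqrt{n\log\log n}/r$, which is absorbed into $\sqrt{n\log\log n}\,l_{s_2(n)}$ (using $l_{s_2(n)}\ge 1$), so the bound from the single layer $s_2(n)$ already suffices.

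I expect the third regime of the upper bound to be the only genuinely delicate step: it is the sole source of the correction $\sqrt{n\log\log n}\log\log\log n$ in the statement, and it arises because above $s_0'(n)$ our per-layer estimates degrade to the trivial ``range times $1$'' bound with no control over $l_s$. The counting argument using $k_{s+1}>2k_s$ and the classical LIL is what keeps the number of such layers at $O(\log\log\log n)$; a sharper bound on $\left|W_n\right|_{\Delta_s}$ in that regime would remove the extraneous term, but this is precisely the difficulty flagged in the paper's remark on joint behaviour of layers near the critical layer.
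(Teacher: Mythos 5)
Your proof matches the paper's argument step for step: the same three-way decomposition of the sum $\sum_{s\leq s_0(n)}|W_n|_{\Delta_s}$ via \Pref{prop:limsup-layer-upper-bound} for the finite $\limsup$, the same $O(\log\log\log n)$ count of layers in $s_0'(n)<s\leq s_0(n)$, and the same two-layer lower bound at $s_2(n)$ and $s_2(n)+1$ via \eqref{eq:total-dist-low-bound} and \Pref{prop:limsup-layer-lower-bound}. The only difference is your explicit treatment of the degenerate case $s_2(n)+1>s_0'(n)$, which the paper's proof glosses over (it applies \Pref{prop:limsup-layer-lower-bound} at $s_2(n)+1$ without checking that this layer lies in range); your observation that $n/k_{s_2(n)+1}\lesssim\sqrt{n\log\log n}\,l_{s_2(n)}$ in that case correctly closes this small gap without changing the structure of the argument.
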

\begin{proof}
For the lower bound, we use \eqref{eq:total-dist-low-bound} and \Pref{prop:limsup-layer-lower-bound}, showing that for infinitely many $n$ we have
\begin{align*}
    \left|W_n\right|_{\Delta}&\ge\frac{1}{2}\left(\left|W_n\right|_{\Delta_{s_2(n)}}+\left|W_n\right|_{\Delta_{s_2(n)+1}}\right)\ge\\
    &\ge\frac{c}{2}\left(\sqrt{n\log\log n}l_{s_2(n)}+\frac{n}{k_{s_2(n)+1}}\right)=\frac{c}{2}g(n),
\end{align*}
so $\limsup_{n\to\infty}\frac{\left|W_n\right|_{\Delta}}{g(n)}>0$ almost surely.

For the upper bound, we use \eqref{eq:total-dist-up-bound}. We divide the interval $0\leq s\leq s_0(n)$ into three parts, and use \Pref{prop:limsup-layer-upper-bound} to analyze the contribution of each of them:
\begin{itemize}
  \item For $0\leq s\leq s_2(n)$,
  \begin{align*}
  \sum_{s\leq s_2(n)}|W_n|_{\Delta_s}&\leq\sum_{s\leq s_2(n)}C_1\sqrt{n\log\log n}\, l_s\leq\\
  &\leq\frac{C_1}{1-1/m_0}\sqrt{n\log\log n}\, l_{s_2(n)}
  \end{align*}
  where the last inequality follows from $l_{s+1}\ge m_0l_s$.
  \item For $s_2(n)+1\leq s\leq s_0'(n)$,
  $$\sum_{s_2(n)+1\leq s\leq s_0'(n)}|W_n|_{\Delta_s}\leq\sum_{s_2(n)+1\leq s\leq s_0'(n)}\frac{C_1n}{k_s}\leq\frac{2C_1n}{k_{s_2(n)+1}}$$
  where the last inequality follows from $k_{s+1}>2k_s$.
  \item For $s_0'(n)+1\leq s\leq s_0(n)$, recall that for large enough $n$ we have $\left|\range(S_n)\right|\leq 2\sqrt{n\log\log n}$. As $k_{s+1}>2k_s$, the number of layers $s_0'(n)+1\leq s\leq s_0(n)$ is at most $c\log\log\log n$ for some constant $c>0$, so
      $$\sum_{s_0'(n)+1\leq s\leq s_0(n)}|W_n|_{\Delta_s}\leq C_1\sqrt{n\log\log n}\log\log\log n.$$
\end{itemize}
Summing the above yields
$$\left|W_n\right|_{\Delta}\leq Cg(n)+C\sqrt{n\log\log n}\log\log\log n,$$
proving the second $\limsup$ is finite.
\end{proof}

For the liminf, recall that
$$s_3(n)=\max\set{s\ge 0\suchthat k_sl_s\leq\sqrt{n\log\log n}}$$
and that $\tilde{s}_3(n)=\min\set{s'_0(n),s_3(n)}$.
\begin{prop}\label{prop:liminf-s-expr}
Suppose $\set{\Gamma_s}$ satisfy the $(\sigma,c_0l_s)$-linear speed assumption and $\diam(\Gamma_s)\leq C_0l_s$. Suppose there exists a constant $m_0>1$ such that
$$k_{s+1}>2k_s,\; l_{s+1}\ge m_0l_s$$
for all $s$. Let
$$h(n)=\begin{cases}\frac{n}{k_{s_3(n)+1}}+\frac{\sqrt{n}}{\sqrt{\log\log n}}l_{s_3(n)},&s_3(n)<s'_0(n)\\\frac{\sqrt{n}}{\sqrt{\log\log n}}l_{s'_0(n)},&\textrm{otherwise}.\end{cases}$$
Then almost surely
$$0<\liminf_{n\to\infty}\frac{|W_n|_{\Delta}}{h(n)}<\infty.$$
\end{prop}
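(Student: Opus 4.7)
My plan is to follow the template of \Pref{prop:limsup-s-expr}, proving the lower and upper sides of the $\liminf$ statement separately, with the upper side requiring passage to an infinitely-often subsequence. For the lower bound I would use \eqref{eq:total-dist-low-bound} on one or two critical layers, and for the upper bound I would combine \eqref{eq:total-dist-up-bound} with a Chung-type law of the iterated logarithm on $\Z$ producing infinitely many $n$ at which $|\range(S_n)|$ is of order $\sqrt{n/\log\log n}$, so that the hypothesis of \Pref{prop:liminf-layer-upper-bound} is met along this subsequence.

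For $\liminf|W_n|_\Delta/h(n)>0$, I would apply \Pref{prop:liminf-layer-lower-bound} at the layer $\tilde s_3(n)$ and, when applicable, at the next layer. In Case~1 ($s_3(n)<s'_0(n)$, so $\tilde s_3(n)=s_3(n)$ and $s_3(n)+1\leq s'_0(n)$), both bounds $|W_n|_{\Delta_{s_3(n)}}\geq c\sqrt{n/\log\log n}\, l_{s_3(n)}$ and $|W_n|_{\Delta_{s_3(n)+1}}\geq cn/k_{s_3(n)+1}$ are available, yielding $|W_n|_\Delta\geq\frac{c}{2}h(n)$ via \eqref{eq:total-dist-low-bound}. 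In Case~2 ($s_3(n)\geq s'_0(n)$, so $\tilde s_3(n)=s'_0(n)$), the single layer $s'_0(n)$ already provides $|W_n|_\Delta\geq c h(n)$ directly, since only one summand of $h$ is retained in this case.

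For $\liminf|W_n|_\Delta/h(n)<\infty$, I would invoke Chung's LIL to produce infinitely many $n$ with $|\range(S_n)|$ of the right order (adjusting the constant $2$ in the hypothesis of \Pref{prop:liminf-layer-upper-bound} if needed), and then split the sum in \eqref{eq:total-dist-up-bound} at $\tilde s_3(n)$. The layers $s\leq\tilde s_3(n)$ contribute $O(\sqrt{n/\log\log n}\, l_{\tilde s_3(n)})$ by summing the geometric progression $l_{s+1}\geq m_0 l_s$, and the layers $\tilde s_3(n)<s\leq s_0(n)$ contribute $O(n/k_{\tilde s_3(n)+1})$ by summing the geometric progression $k_{s+1}>2k_s$ (with only $O(1)$ extra layers between $s'_0(n)$ and $s_0(n)$ thanks to the range bound). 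In Case~1 these two contributions are exactly of order $h(n)$, completing the argument directly.

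The main obstacle is the bookkeeping of Case~2 in the upper bound: the contribution from the layers $s'_0(n)<s\leq s_0(n)$ is of order $\sqrt{n\log\log n}$, while $h(n)=\sqrt{n/\log\log n}\, l_{s'_0(n)}$ may be considerably smaller if $l_{s'_0(n)}$ happens to be small. Resolving this requires exploiting the defining inequalities $k_{s'_0(n)}\leq r\sqrt{n/\log\log n}$ and $k_{s'_0(n)}l_{s'_0(n)}\leq\sqrt{n\log\log n}$ characterizing Case~2, together with the geometric spacing of $\set{k_s}$ and $\set{l_s}$, to identify the effective critical layer and to show that, on the infinitely-often subsequence, the residual term is in fact absorbed into $h(n)$. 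This careful matching of the critical layers between the upper and lower sides is the most delicate step of the argument.
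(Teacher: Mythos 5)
Your plan matches the paper's proof structure essentially step for step: the lower bound comes from applying \Pref{prop:liminf-layer-lower-bound} at $\tilde s_3(n)$ (and at $\tilde s_3(n)+1$ in Case 1), the upper bound from taking $n$ along a Chung-type subsequence where $|\range(S_n)|\leq 2\sqrt{n/\log\log n}$, invoking \Pref{prop:liminf-layer-upper-bound}, and summing the geometric progressions $l_{s+1}\geq m_0 l_s$ and $k_{s+1}>2k_s$ across the three ranges $[0,\tilde s_3(n)]$, $(\tilde s_3(n), s_0'(n)]$ (when Case 1 applies), and $(s_0'(n), s_0(n)]$.

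However, you leave exactly the step you label ``the most delicate'' unresolved, and that step is genuine, not mere bookkeeping. You correctly observe that in Case 2 the third range contributes a residual of order $\sqrt{n\log\log n}$ (or, with the paper's cruder third-bullet bound, of order $n/k_{s_0'(n)}$), which must be absorbed into $h(n)=\sqrt{n/\log\log n}\,l_{s_0'(n)}$. The paper's own dismissal of this term (``$s_0'(n)\leq s_3(n)$, so $k_{s_0'(n)}l_{s_0'(n)}\leq\sqrt{n\log\log n}$, and again the latter term is inessential'') is not self-evidently correct: that very inequality rearranges to $n/k_{s_0'(n)}\geq\sqrt{n/\log\log n}\,l_{s_0'(n)}=h(n)$, i.e.\ the residual is at least as large as $h(n)$, not smaller. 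To close the gap one needs either the tighter third-range bound coming directly from \Pref{prop:liminf-layer-upper-bound} (namely $\sum_{s>s_0'(n)}|W_n|_{\Delta_s}\lesssim n/k_{s_0'(n)+1}\lesssim\sqrt{n\log\log n}$, using $k_{s_0'(n)+1}>r\sqrt{n/\log\log n}$), together with an argument that $l_{s_0'(n)}\gtrsim\log\log n$ in the regime where Case 2 and the subsequence occur — the last point is where the hypothesis $\log\log k_s\leq l_s$ from \Lref{lem:approx-loglog} enters, but it also requires a lower bound on $k_{s_0'(n)}$ in terms of $n$. So your proposal is correct in outline, but both you and the paper leave the reader to fill in precisely why the third-range residual does not exceed $h(n)$ in Case 2, and that justification is not immediate from the inequality the paper cites.
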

\begin{proof}
For the upper bound we use again \eqref{eq:total-dist-up-bound}. Take $n$ such that $\left|\range(S_n)\right|\leq\frac{2\sqrt{n}}{\sqrt{\log\log n}}$, and assume $n$ is large enough such that \Pref{prop:liminf-layer-upper-bound} holds. We divide the interval $0\leq s\leq s_0(n)$ into several parts and analyze the contribution of each of them:
\begin{itemize}
  \item For $0\leq s\leq\tilde{s}_3(n)$,
  $$\sum_{s\leq \tilde{s}_3(n)}|W_n|_{\Delta_s}\leq\sum_{s\leq \tilde{s}_3(n)}\frac{C\sqrt{n}}{\sqrt{\log\log n}}\, l_s\leq\frac{C}{1-1/m_0}\frac{\sqrt{n}}{\sqrt{\log\log n}}l_{\tilde{s}_3(n)}$$
  where the last inequality follows from $l_{s+1}\ge m_0l_s$.
  \item If $s_3(n)<s'_0(n)$, then for $s_3(n)+1\leq s\leq s'_0(n)$ we have
  $$\sum_{s_3(n)+1\leq s\leq s'_0(n)}|W_n|_{\Delta_s}\leq\sum_{s_3(n)+1\leq s\leq s'_0(n)}\frac{Cn}{k_s}\leq\frac{2Cn}{k_{s_3(n)+1}}$$
  where the last inequality follows from $k_{s+1}>2k_s$.
  \item For $s'_0(n)+1\leq s\leq s_0(n)$, we have
  \begin{align*}
      |W_n|_{\Delta_s}&\leq T(k_s,n)+\left|\range(S_n)\right|\leq \\
      &\leq 3T(k_{s'_0(n)},n)+|\range(S_n)|\leq\frac{Cn}{k_{s'_0(n)}}.
  \end{align*}
  Since $\set{k_s}$ grows at least exponentially, and by our assumption on $n$ $\left|\range(S_n)\right|\leq\frac{2\sqrt{n}}{\sqrt{\log\log n}}$ the number of such layers is bounded above by a constant, so
  $$\sum_{s'_0(n)+1\leq s\leq s_0(n)}\left|W_n\right|_{\Delta_s}\leq\frac{Cn}{k_{s'_0(n)}}.$$
\end{itemize}
Summing the above, we have
$$\left|W_n\right|_{\Delta}\leq\begin{cases}\frac{C\sqrt{n}}{\sqrt{\log\log n}}l_{s_3(n)}+\frac{Cn}{k_{s_3(n)+1}}+\frac{Cn}{k_{s'_0(n)}},&s_3(n)<s'_0(n)\\\frac{C\sqrt{n}}{\sqrt{\log\log n}}l_{s'_0(n)}+\frac{Cn}{k_{s'_0(n)}},&\textrm{otherwise}.\end{cases}$$
In the first case, $k_{s'_0(n)}\ge k_{s_3(n)+1}$, so the latter term is inessential; in the second case, $s'_0(n)\leq s_3(n)$, so $k_{s'_0(n)}l_{s'_0(n)}\leq\sqrt{n\log\log n}$, and again the latter term is inessential. We therefore get $\left|W_n\right|_{\Delta}\leq 2Ch(n)$ for each such $n$, proving the the above $\liminf$ is finite.

To prove that the $\liminf$ is positive, we use \eqref{eq:total-dist-low-bound}. Take $n$ large enough so that \Pref{prop:liminf-layer-lower-bound} holds. If $s_3(n)<s'_0(n)$, we have
\begin{align*}
    \left|W_n\right|_{\Delta}&\ge\frac{1}{2}\left(\left|W_n\right|_{\Delta_{s_3(n)}}+\left|W_n\right|_{\Delta_{s_3(n)+1}}\right)\ge\\
    &\ge\frac{c}{2}\left(\frac{\sqrt{n}}{\sqrt{\log\log n}}l_{s_3(n)}+\frac{n}{k_{s_3(n)+1}}\right)=\frac{c}{2}h(n).
\end{align*}
Finally, if $s'_0(n)\leq s_3(n)$, we have
$$\left|W_n\right|_{\Delta}\ge\left|W_n\right|_{\Delta_{s'_0(n)}}\ge\frac{c\sqrt{n}}{\sqrt{\log\log n}}l_{s'_0(n)}=c\,h(n).$$
This concludes the proof.
\end{proof}

\section{Proof of the main theorem}\label{sec:main-thm-functions}

We return to the idea of \cite{brzh} for approximating a given function. Let $f:\N\to\N$ be a function such that $\frac{f(n)}{\sqrt{n}}$ and $\frac{n}{f(n)}$ are non-decreasing. Choose sequences $\set{k_s}$ and $\set{l_s}$ as in \Pref{prop:approx-funcs}, and let $\Delta$ be the corresponding diagonal product. Denote by $W_n$ the random walk on $\Delta$ with respect to the ``switch-walk-switch'' measure.

\begin{thm}
If $\frac{f(n)}{\sqrt{n}\log\log\log n}$ is non-decreasing, then almost surely
$$0<\limsup_{n\to\infty}\frac{|W_n|_{\Delta}}{\log\log n\cdot f(\frac{n}{\log\log n})}<\infty.$$
\end{thm}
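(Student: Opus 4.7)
The plan is to derive this statement as a direct corollary of \Pref{prop:limsup-s-expr} applied to the sequences $\set{k_s}$ and $\set{l_s}$ produced by \Pref{prop:approx-funcs}. The task reduces to two algebraic verifications: identifying the expression $g(n)=\frac{n}{k_{s_2(n)+1}}+\sqrt{n\log\log n}\,l_{s_2(n)}$ with $\log\log n\cdot f(n/\log\log n)$ up to universal constants, and checking that the extra term $\sqrt{n\log\log n}\log\log\log n$ appearing in the finiteness half of \Pref{prop:limsup-s-expr} is dominated by $g(n)$ under the hypothesis of the theorem.

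For the identification, I would recall from \Pref{prop:approx-funcs} that $f(x)\simeq\bar f(x)=\sqrt{x}\,l_s+x/k_{s+1}$ on $(k_sl_s)^2\leq x<(k_{s+1}l_{s+1})^2$. The definition $s_2(n)=\max\set{s:k_sl_s\leq r\sqrt{n}/\sqrt{\log\log n}}$ places $r^2n/\log\log n$ exactly in the interval indexed by $s_2(n)$, so plugging in and multiplying by $\log\log n$ yields
$$\log\log n\cdot\bar f\!\left(\frac{r^2n}{\log\log n}\right)=r\sqrt{n\log\log n}\,l_{s_2(n)}+\frac{r^2n}{k_{s_2(n)+1}}\simeq g(n).$$
Since $\bar f(x)/\sqrt{x}$ and $x/\bar f(x)$ are non-decreasing, changing the argument by the fixed factor $r^2$ alters $\bar f$ only by universal constants, so $\bar f(r^2n/\log\log n)\simeq f(n/\log\log n)$. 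This gives $g(n)\simeq\log\log n\cdot f(n/\log\log n)$, and the lower bound from \Pref{prop:limsup-s-expr} immediately produces $\limsup|W_n|_\Delta/(\log\log n\cdot f(n/\log\log n))>0$ almost surely.

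To finish, I would check that $\sqrt{n\log\log n}\,\log\log\log n\lesssim\log\log n\cdot f(n/\log\log n)$. The hypothesis that $f(x)/(\sqrt{x}\log\log\log x)$ is non-decreasing forces $f(x)\gtrsim\sqrt{x}\log\log\log x$ for all sufficiently large $x$, and since $\log\log\log(n/\log\log n)\simeq\log\log\log n$, substituting $x=n/\log\log n$ gives $f(n/\log\log n)\gtrsim\sqrt{n/\log\log n}\,\log\log\log n$, which after multiplying by $\log\log n$ is exactly the desired domination. The upper bound in \Pref{prop:limsup-s-expr} then yields finiteness of the $\limsup$, completing the proof. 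The only subtle point is the algebraic bookkeeping with the universal constant $r$ and the regularity of $\bar f$; there is no new probabilistic content here, as the analytic heavy lifting has already been carried out in the preceding sections and encapsulated in \Pref{prop:limsup-s-expr}.
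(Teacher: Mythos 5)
Your proposal matches the paper's own proof: both plug $x = n/\log\log n$ (up to a constant) into Proposition~\ref{prop:approx-funcs} to identify $g(n) \simeq \log\log n \cdot f(n/\log\log n)$, then invoke Proposition~\ref{prop:limsup-s-expr} and use the monotonicity of $f(x)/(\sqrt{x}\log\log\log x)$ to absorb the $\sqrt{n\log\log n}\log\log\log n$ error term. The only difference is that you spell out the bookkeeping with the interval $(k_sl_s)^2 \leq x < (k_{s+1}l_{s+1})^2$ and the constant $r$ a bit more explicitly than the paper does.
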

\begin{proof}
We first note that, by \Pref{prop:approx-funcs},
\begin{align*}
  f\left(\frac{rn}{\log\log n}\right) & \simeq\sqrt{\frac{rn}{\log\log n}}\,l_{s_2(n)}+\frac{\frac{rn}{\log\log n}}{k_{s_2(n)+1}}\simeq\\
  &\simeq\frac{1}{\log\log n}\left(\sqrt{n\log\log n}\,l_{s_2(n)}+\frac{n}{k_{s_2(n)+1}}\right).
\end{align*}
Therefore, \Pref{prop:limsup-s-expr} shows that almost surely,
$$0<\limsup_{n\to\infty}\frac{\left|W_n\right|_{\Delta}}{\log\log n\left(f\left(\frac{n}{\log\log n}\right)+\frac{\sqrt{n}}{\sqrt{\log\log n}}\log\log\log n\right)}<\infty.$$
As $\frac{f(n)}{\sqrt{n}\log\log\log n}$ is non-decreasing,
$$\frac{\sqrt{n}}{\sqrt{\log\log n}}\log\log\log n\lesssim f\left(\frac{n}{\log\log n}\right),$$
and the conclusion follows.
\end{proof}

\begin{thm}
If $\frac{f(n)}{\sqrt{n}(\log\log n)^{1+\eps}}$ is non-decreasing for some $\eps>0$, and the sequences $\set{k_s}$ and $\set{l_s}$ are chosen so that $\log\log k_s\leq l_s$ (as in \Lref{lem:approx-loglog}), then almost surely
$$0<\liminf_{n\to\infty}\frac{|W_n|_{\Delta}}{\frac{1}{\log\log n}\cdot f(n\log\log n)}<\infty.$$
\end{thm}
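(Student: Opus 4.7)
My plan is to mirror the proof of the preceding $\limsup$ theorem, invoking now \Pref{prop:liminf-s-expr} in place of \Pref{prop:limsup-s-expr}. First, I apply \Pref{prop:approx-funcs} at the argument $n\log\log n$; since the condition $(k_sl_s)^2\leq n\log\log n<(k_{s+1}l_{s+1})^2$ is exactly the defining condition of $s_3(n)$, this gives
\begin{align*}
f(n\log\log n)&\simeq\sqrt{n\log\log n}\,l_{s_3(n)}+\frac{n\log\log n}{k_{s_3(n)+1}}\\
&\simeq\log\log n\cdot\left(\frac{\sqrt{n}}{\sqrt{\log\log n}}\,l_{s_3(n)}+\frac{n}{k_{s_3(n)+1}}\right),
\end{align*}
so $\frac{1}{\log\log n}f(n\log\log n)$ reproduces precisely the first case of the formula for $h(n)$ in \Pref{prop:liminf-s-expr}.

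The heart of the argument, and the main obstacle, is to show that this first case actually applies for all sufficiently large $n$, i.e., that $s_3(n)<s'_0(n)$. Setting $s^*:=s_3(n)+1$, this is equivalent to $k_{s^*}\leq\frac{r\sqrt{n}}{\sqrt{\log\log n}}$. The hypothesis that $f(n)/(\sqrt{n}(\log\log n)^{1+\eps})$ is non-decreasing, evaluated at the transition points $n_s:=(k_sl_s)^2$ where \Pref{prop:approx-funcs} yields $f(n_s)/\sqrt{n_s}\simeq l_s$ and $\log\log n_s\simeq\log\log(k_sl_s)$, translates into the uniform lower bound $l_s\gtrsim(\log\log(k_sl_s))^{1+\eps}$ for every $s$. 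Applying this at $s=s^*$ and using $k_{s^*}l_{s^*}>\sqrt{n\log\log n}$ (so that $\log\log(k_{s^*}l_{s^*})\geq\log\log n-O(1)$ for large $n$), I obtain $l_{s^*}\gtrsim(\log\log n)^{1+\eps}$, which for $\eps>0$ strictly exceeds $\log\log n/r$ eventually. The auxiliary structural condition $\log\log k_s\leq l_s$ coming from \Lref{lem:approx-loglog} is what prevents the sequence $k_s$ from growing so rapidly between consecutive indices as to break the required bound, and combining it with the preceding lower bound on $l_{s^*}$ yields $k_{s^*}\leq\frac{r\sqrt{n}}{\sqrt{\log\log n}}$ as desired.

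Once the first case of $h(n)$ is established, the formula for $h(n)$ together with the computation of the first paragraph gives $h(n)\simeq\frac{1}{\log\log n}f(n\log\log n)$, and the conclusion
\begin{equation*}
0<\liminf_{n\to\infty}\frac{|W_n|_{\Delta}}{\tfrac{1}{\log\log n}f(n\log\log n)}<\infty
\end{equation*}
follows directly from \Pref{prop:liminf-s-expr}. The presence of the exponent $1+\eps$, rather than a weaker factor like $\log\log\log n$ that sufficed in the $\limsup$ theorem, is dictated precisely by this argument: one must overcome a full power of $\log\log n$ when passing from the inequality $k_{s^*}l_{s^*}>\sqrt{n\log\log n}$ defining $s_3(n)$ to the desired upper bound $k_{s^*}\leq r\sqrt{n}/\sqrt{\log\log n}$ defining $s'_0(n)$, and this is exactly the slack that the hypothesis provides.
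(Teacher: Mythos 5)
Your first paragraph is correct and matches the paper's opening step: applying \Pref{prop:approx-funcs} at the argument $n\log\log n$ does reproduce the first case of $h(n)$ from \Pref{prop:liminf-s-expr}. Your derivation of the uniform lower bound $l_s\gtrsim(\log\log(k_sl_s))^{1+\eps}$ from the hypothesis on $f$ (by comparing at the transition points $n_s=(k_sl_s)^2$) is also correct and is a key estimate.

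However, the deduction at the heart of your argument --- namely that the bound $l_{s^*}\gtrsim(\log\log n)^{1+\eps}$ together with the structural condition $\log\log k_{s^*}\leq l_{s^*}$ implies $k_{s^*}\leq\frac{r\sqrt{n}}{\sqrt{\log\log n}}$ --- is a non sequitur. The condition $\log\log k_{s^*}\leq l_{s^*}$ only bounds $k_{s^*}$ from above by $\exp\exp(l_{s^*})$, and since you have just argued that $l_{s^*}$ is \emph{large}, this upper bound is enormous, not of size $\sqrt{n}/\sqrt{\log\log n}$. In other words, a large $l_{s^*}$ permits a large $k_{s^*}$; it does not forbid one. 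Your proof hinges on establishing the strict inequality $s_3(n)<s_0'(n)$, and this step does not accomplish it.

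The correct way to compare the layers, which is what the paper does, proceeds by contradiction and evaluates the estimate at a \emph{different} index, namely $s_0'(n)+1$ rather than $s_3(n)+1$. Suppose $s_3(n)>s_0'(n)$, so that $s:=s_0'(n)+1\leq s_3(n)$. Then two things hold simultaneously for this $s$: first, $k_sl_s\leq\sqrt{n\log\log n}$ while $k_s>\frac{r\sqrt{n}}{\sqrt{\log\log n}}$, which forces $l_s<\frac{\log\log n}{r}$; and second, since $k_sl_s\geq k_s>\frac{r\sqrt{n}}{\sqrt{\log\log n}}$, your derived lower bound yields $l_s\gtrsim(\log\log(k_sl_s))^{1+\eps}\gtrsim(\log\log n)^{1+\eps}$. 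These two bounds on $l_s$ are incompatible for $n$ large, so $s_3(n)\leq s_0'(n)$. Note that this only gives the weak inequality; the paper states this weak form, and in the borderline case $s_3(n)=s_0'(n)$ one must separately verify that the ``otherwise'' branch of $h(n)$ still matches $\frac{1}{\log\log n}f(n\log\log n)$ (this uses that $\frac{n}{k_{s_3(n)+1}}<\frac{\sqrt{n}\sqrt{\log\log n}}{r}$ and that $l_{s_3(n)}\gtrsim\log\log n$). So in addition to the invalid deduction, your strict inequality $s_3(n)<s_0'(n)$ is a stronger claim than is warranted, and the equality case is left unaddressed.
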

\begin{proof}
We first note that, by \Pref{prop:approx-funcs},
\begin{align*}
  f\left(c_1n\log\log n\right) & \simeq\sqrt{c_1n\log\log n}\, l_{s_3(n)}+\frac{c_1n\log\log n}{k_{s_3(n)+1}}\simeq\\
  &\simeq\log\log n\left(\frac{\sqrt{n}}{\sqrt{\log\log n}}\,l_{s_3(n)}+\frac{n}{k_{s_3(n)+1}}\right).
\end{align*}
Also, the assumption that $\log\log k_s\leq l_s$ is non-decreasing shows that $s_3(n)\leq s_0'(n)$ for large enough $n$. Therefore, \Pref{prop:liminf-s-expr} shows that almost surely
$$0<\liminf_{n\to\infty}\frac{\left|W_n\right|_{\Delta}}{\frac{1}{\log\log n}f(n\log\log n)}<\infty.$$
\end{proof}

\appendix

\section{$k$-jumps of a simple random walk on $\Z$}\label{app:random-walk-facts}

Let $S_t$ be a simple random walk on $\Z$, and fix $k>0$. Recall from \Dref{defn:k-assoc} the sequence $\set{n_j^{(k)}}_{j=0}^{\infty}$: We set $n_0^{(k)}=0$, and
$$n_j^{(k)}=\inf\set{t>n_{j-1}^{(k)}\suchthat S_t\in\set{S_{n_{j-1}^{(k)}}\pm k}}.$$
Recall also $N_k(n)=\max\set{j\suchthat n_j^{(k)}\leq n}$. Let $X_j=n_j^{(k)}-n_{j-1}^{(k)}$. Our goal is to estimate $N_k(n)$.

\begin{prop}
Let $0<r<1$. There exists $c>0$ such that for any $n,k\ge 1$,
$$\Pr\left(N_k(n)>\frac{cn}{k^2}\right)\leq \exp\left(-\frac{rn}{k^2}\right).$$
\end{prop}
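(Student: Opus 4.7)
The plan is to rewrite $\{N_k(n) > m\}$ as a lower-tail event for an i.i.d.\ sum and then apply a Chernoff-type inequality. By the strong Markov property and translation invariance of simple random walk, the increments $X_j = n_j^{(k)} - n_{j-1}^{(k)}$ are i.i.d.\ copies of the exit time $\tau$ of a simple random walk from $[-k,k]$ starting at $0$, and by definition $\{N_k(n) > m\} = \{X_1 + \cdots + X_m \le n\}$. Since $\E[\tau] = k^2$ (standard, by optional stopping on $S_t^2 - t$), taking $m = cn/k^2$ with $c > 1$ places the target $n$ strictly below the mean $cn$ of the sum, so we are in a genuine lower-deviation regime where exponential decay is to be expected.

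Next, I would compute the Laplace transform of $\tau$ via the standard discrete Dirichlet problem: setting $u(x,s) = \E_x[s^\tau]$, one has $u(x,s) = \tfrac{s}{2}(u(x-1,s) + u(x+1,s))$ on $|x| < k$ with $u(\pm k, s) = 1$, whose solution is $u(x,s) = \cosh(\alpha x)/\cosh(\alpha k)$ with $\alpha$ determined by $s\cosh\alpha = 1$. Setting $s = e^{-\lambda}$ gives $\E[e^{-\lambda\tau}] = 1/\cosh(\alpha k)$ where $\cosh\alpha = e^\lambda$. Applying Markov's inequality to $e^{-\lambda \sum_j X_j}$ then yields, for any $\lambda > 0$,
$$\Pr\bigl(N_k(n) > m\bigr) \;\le\; e^{\lambda n}\,\bigl(\cosh(\alpha k)\bigr)^{-m}.$$

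To close the argument, I would choose $\lambda$ of order $1/k^2$ (the diffusive scale), which makes $\alpha k$ of order one uniformly in $k$. An elementary comparison shows that for $\lambda = 1/k^2$ one has $\alpha k \ge \sqrt{2}$ for every integer $k \ge 1$; equivalently, $e^{1/k^2} \ge \cosh(\sqrt{2}/k)$, which follows termwise from the bound $1/n! \ge 2^n/(2n)!$ on the Taylor coefficients. Consequently $\log\cosh(\alpha k) \ge \gamma := \log\cosh\sqrt{2} > 0$, and substituting $m = cn/k^2$ and $\lambda = 1/k^2$ gives
$$\log \Pr\bigl(N_k(n) > cn/k^2\bigr) \;\le\; \frac{n}{k^2}\bigl(1 - c\gamma\bigr).$$
Choosing $c \ge (1+r)/\gamma$ then yields $\Pr \le \exp(-rn/k^2)$ as required; the trivial cases (e.g.\ $n < k$, where $N_k(n) = 0$) are handled separately.

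The only genuine technical point is the uniform-in-$k$ estimate $\alpha k \ge \sqrt{2}$: although the Brownian analogue gives $\alpha k \to \sqrt{2}$ as $k \to \infty$ (since $\alpha \sim \sqrt{2\lambda}$), one must verify that this diffusive lower bound survives the discretization for every $k \ge 1$ rather than only asymptotically. Once this is confirmed via the termwise comparison above, the remainder is a routine Chernoff calculation with no further difficulty.
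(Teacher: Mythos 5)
Your proof is correct, but it takes a genuinely different route from the paper's. You compute the exact Laplace transform of the exit time $\tau$ from $[-k,k]$ by solving the discrete Dirichlet problem, obtaining $\E[e^{-\lambda\tau}]=1/\cosh(\alpha k)$ with $\cosh\alpha=e^{\lambda}$, and then run the Chernoff method directly on $\sum_j X_j$; the single technical point—that at the diffusive scale $\lambda=1/k^2$ one has $\alpha k\ge\sqrt{2}$ uniformly in $k$, i.e.\ $e^{1/k^2}\ge\cosh(\sqrt{2}/k)$—is correctly verified termwise via $(2n)!/n!\ge 2^n$. The paper instead avoids any exact MGF computation: it uses Doob's maximal inequality to show $\Pr(X_j>k^2)\ge p_0$ for a universal $p_0>0$, then stochastically dominates $\sum_j k^2\ind{\{X_j>k^2\}}$ by $n$ to reduce to a binomial lower-tail estimate, and finishes with the standard Chernoff bound for binomials. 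Your approach buys an explicit, cleaner constant ($c=(1+r)/\log\cosh\sqrt{2}$ works) and is self-contained once the Dirichlet solution is in hand; the paper's is softer, requiring only the crude one-bit information $\Pr(\tau>k^2)\ge p_0$, which makes it more robust but less quantitative. Both are sound. One cosmetic remark: strictly, $\{N_k(n)>m\}=\{X_1+\cdots+X_{m+1}\le n\}\subseteq\{X_1+\cdots+X_m\le n\}$ rather than an equality, but this only improves your upper bound and does not affect the argument (the paper's proof has the same harmless identification).
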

\begin{proof}
For $k=1$, we may choose $c=1$ and the inequality will hold; so we assume $k>1$. Note that
\begin{align*}
\Pr\left(X_j>k^2\right)&=\Pr\left(\max_{n_{j-1}\leq t\leq n_{j-1}+k^2}\left|S_t-S_{n_{j-1}}\right|<k\right)=\\
&=\Pr\left(\max_{0\leq t\leq k^2}\left|S_t\right|<k\right)=1-\Pr\left(\max_{0\leq t\leq k^2}\left|S_t\right|\ge k\right)\ge\\
&\ge 1-\frac{\E|S_{k^2}|}{k},
\end{align*}
where the last step follows from Doob's maximal inequality. Since $\E|S_n|\sim\sqrt{\frac{2}{\pi}n}$, there is a constant $p_0>0$ such that $\Pr\left(X_j>k^2\right)\ge p_0$ for all $k$. As the random variables $X_1,X_2,\dots$ are i.i.d., for all $c>1$ we have
\begin{align*}
  \Pr\left(N_k(n)>\frac{cn}{p_0k^2}\right) & =\Pr\left(\sum_{j=1}^{cn/p_0k^2}X_j\leq n\right)\leq\\
  &\leq\Pr\left(\sum_{j=1}^{cn/p_0k^2}k^2\cdot \ind{\set{X_j>k^2}}\leq n\right)\leq\\
  &\leq\Pr\left(\Bin\left(\frac{cn}{p_0k^2},p_0\right)\leq\frac{n}{k^2}\right).
\end{align*}
Chernoff's inequality shows that
$$\Pr\left(\Bin\left(\frac{cn}{p_0k^2},p_0\right)\leq\frac{n}{k^2}\right)\leq\exp\left(-\left(1-\frac{1}{c}\right)^2\frac{n}{2k^2}\right).$$

Now, let $0<r<1$. Choosing a large enough $c$ so that $r\leq\left(1-\frac{1}{c}\right)^2$, we have
$$\Pr\left(N_k(n)>\frac{cn}{p_0k^2}\right)\leq\exp\left(-\frac{rn}{k^2}\right)$$
as required.
\end{proof}

\begin{prop}
Let $0<r<\frac{1}{10}$. Then there exists $c'>0$ such that for all $n,k\ge 1$,
$$\Pr\left(N_k(n)<\frac{c'n}{k^2}\right)\leq\exp\left(-\frac{rn}{k^2}\right).$$
\end{prop}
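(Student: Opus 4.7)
The plan is to mirror the previous proposition by controlling the upper tail of the partial sum $\sum_{j=1}^m X_j$ when $m\approx c'n/k^2$ is small compared to the mean $mk^2$. The key analytic input is that the exit time $X_1$ from the interval $(-k,k)$ has an exponential tail on the scale $k^2$, uniformly in $k$; this is enough to drive a standard Chernoff argument.

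The first step is to establish a uniform tail estimate
$$\Pr\left(X_1>tk^2\right)\leq C_0\exp(-c_0t)\qquad\text{for all }k\ge 1,\;t\ge 0,$$
for some universal $C_0,c_0>0$. To see this, pick $M$ large enough so that the central limit theorem gives $\Pr(\max_{0\leq s\leq Mk^2}|x+S_s|\ge k)\ge p_0>0$ uniformly in $x\in(-k,k)$ and in $k$; the finitely many small-$k$ cases are checked directly. Then iterate via the strong Markov property: on $\{X_1>jMk^2\}$ the walk is still inside $(-k,k)$ at time $jMk^2$, so the conditional probability of not exiting in the next $Mk^2$ steps is at most $1-p_0$, giving $\Pr(X_1>jMk^2)\leq(1-p_0)^j$.

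This tail bound yields a uniform moment generating function estimate: for any $\lambda\in(0,c_0)$,
$$\E\left[\exp(\lambda X_1/k^2)\right]\leq M(\lambda)<\infty,$$
with $M(\lambda)\to 1$ as $\lambda\to 0^+$. Since $\{N_k(n)<c'n/k^2\}\subseteq\bigl\{\sum_{j=1}^{\ceil{c'n/k^2}}X_j>n\bigr\}$ and the $X_j$ are i.i.d., the standard Chernoff inequality (applied as in the previous proposition) gives
$$\Pr\left(\sum_{j=1}^{m}X_j>n\right)\leq\exp(-\lambda n/k^2)\,M(\lambda)^m=\exp\left(-\frac{n}{k^2}\bigl(\lambda-c'\log M(\lambda)\bigr)\right),$$
with $m=\ceil{c'n/k^2}$.

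Given $r<\frac{1}{10}$, first fix $\lambda\in(0,c_0)$ with $\lambda>r$ and with $\log M(\lambda)$ small, then choose $c'>0$ so that $c'\log M(\lambda)\leq\lambda-r$. This delivers the required bound $\exp(-rn/k^2)$. The main obstacle is the uniform-in-$k$ exponential tail for $X_1$ — once that is in hand, the rest is routine Chernoff analysis. The numerical constant $\frac{1}{10}$ in the statement plays no special role: any $r<c_0$ can be achieved by choosing $c'$ small enough.
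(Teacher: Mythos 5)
Your overall strategy is the same as the paper's: establish an exponential tail for the inter‑exit times $X_j$ on the scale $k^2$, uniformly in $k$, and then run a Chernoff argument on the partial sums. The paper implements this by showing $\Pr(X_j\ge mk^2)\le p_1^m$ with $p_1=0.9$ (so that $X_j/k^2$ is stochastically dominated by a $\Geo(1-p_1)$ variable), and then invoking Janson's large‑deviation bound for sums of geometrics; you instead use an $Mk^2$ blocking argument to get $\Pr(X_1>jMk^2)\le(1-p_0)^j$ and a generic MGF Chernoff. These are genuinely the same idea, with Janson's inequality replaced by a hand‑rolled exponential Markov estimate.

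The one real gap is your dismissal of the constant $\tfrac{1}{10}$. Your argument delivers the conclusion only for $r<c_0$ where $c_0=-\log(1-p_0)/M$, and you never verify that $c_0\ge\tfrac{1}{10}$. You in fact suggest taking $M$ ``large enough'' so the CLT kicks in uniformly, but enlarging $M$ shrinks $c_0$, so as written your range of admissible $r$ is some unspecified $(0,c_0)$ which may well be strictly smaller than $(0,\tfrac{1}{10})$. This is exactly what the paper's choice $p_1=0.9$ is for: the admissible range there is $r<1-p_1=\tfrac{1}{10}$ (and the bound is later applied with $r=\tfrac{1}{20}$), so the constant is not decorative. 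To close the gap you would need to take $M=1$ and quantify $p_0$: one checks directly that $\Pr(\max_{0\le s\le k^2}|S_s|\ge k)\ge 1-\Pr(|S_{k^2}|\le k)\ge 0.1$ for all $k\ge 2$ (with a direct check for $k=1$), which recovers the paper's $p_1=0.9$ and yields $c_0=-\log 0.9>\tfrac{1}{10}$. Two other small points to tidy: your final Chernoff estimate has an extra factor $M(\lambda)$ coming from $m=\lceil c'n/k^2\rceil\le c'n/k^2+1$, which you should absorb by taking $\lambda-c'\log M(\lambda)$ strictly above $r$ and handling small $n/k^2$ separately; and $M(\lambda)=\E[e^{\lambda X_1/k^2}]$ depends on $k$, so you should replace it by the uniform majorant $1+\lambda C_0/(c_0-\lambda)$ coming from your tail bound before running the optimization in $\lambda$ and $c'$.
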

\begin{proof}
First, for $k=1$ we may take $c'=\frac{1}{2}$, and the desired probability is $0$; so we assume $k>1$. Note that
$$\Pr\left(X_i\ge k^2\right)\leq\Pr\left(\left|S_{k^2}\right|\leq k\right),$$
and the latter converges from below to $\Pr\left(-1\leq \mathrm{N}(0,1)\leq 1\right)\approx 0.683$. One can check that by taking $p_1=0.9$ we have $\Pr\left(X_i\ge k^2\right)\leq p_1$ for all $k\ge 2$.

Now, for any $m\ge 1$,
\begin{align*}
  \Pr\left(X_j\ge(m+1)k^2\right)&=\Pr\left(X_j\ge(m+1)k^2\suchthat X_j\ge k^2\right)\cdot\Pr(X_j\ge k^2)\leq\\
  &\leq\Pr\left(X_j\ge mk^2\right)\Pr\left(X_j\ge k^2\right),
\end{align*}
and induction on $m$ shows that $\Pr(X_j\ge mk^2)\leq\Pr\left(X_j\ge k^2\right)^m$ for all~$m$. Therefore random variable $\frac{1}{k^2}X_j$ is stochastically dominated by a geometric random variable $G_j\sim\Geo(1-p_1)$, so for any $c'\leq 1-p_1$,
$$\Pr\left(N_k(n)<\frac{c'n}{k^2}\right)=\Pr\left(\sum_{j=1}^{c'n/k^2}X_j>n\right)\leq \Pr\left(\sum_{j=1}^{c'n/k^2}G_j>\frac{n}{k^2}\right).$$
By \cite[Theorem 2.1]{jan},
$$\Pr\left(\sum_{j=1}^{c'n/k^2}G_j>\frac{n}{k^2}\right)\leq\exp\left(-\frac{c'n}{k^2}(\lambda-1-\ln\lambda )\right)$$
for $\lambda=\frac{1-p_1}{c'}$.

Finally, let $0<r<\frac{1}{10}$. As $c'(\lambda-1-\ln\lambda)=1-p_1-c'-c'\ln\frac{1-p_1}{c'}\to 1-p_1$ as $c'\to 0$, by choosing small enough $c'$ we have $c'(\lambda-1-\ln\lambda)>r$, and thus
$$\Pr\left(N_k(n)>\frac{c'n}{k^2}\right)\leq\exp\left(-\frac{rn}{k^2}\right).$$
\end{proof}

\section{On the maximal local time of a simple random walk}\label{app:max-local-time-brow}

For a simple random walk $S_t$ on $\Z$, recall that
$$L(x,n)=\left|\set{0\leq k\leq n\suchthat S_k=x}\right|$$
denotes the local time of $S_t$ at $x$ until time $n$. We are interested in the tail behavior of $L(n)=\max_{x\in\Z} L(x,n)$.
The main aim of this section is to prove an upper tail bound on the maximal local time of a simple random walk on $\Z$ (\Cref{cor:max-local-time}). This seems like a classical result, however we could not find the exact statement we needed in the literature, and therefore provide the statement with proof and background for completeness.

\subsection{Brownian motion and the Skorokhod embedding}

Let $B_t$ be a Brownian motion on $\R$ starting at $0$. Define a sequence of stopping times by $\tau_0=0$ and, inductively,
$$\tau_k=\inf\set{t>\tau_{k-1}\suchthat \left|B_t-B_{\tau_{k-1}}\right|=1}.$$
Then $S_k=B_{\tau_k}$ is a simple random walk on $\Z$, and $\tau_k-\tau_{k-1}$ are i.i.d.r.v.\ with $\E\left[\tau_k-\tau_{k-1}\right]=1$ and $\sigma^2=\E\left[\left(\tau_k-\tau_{k-1}\right)^2\right]<\infty$. We therefore have
\begin{equation}\label{eq:tau_n-n}
\Pr\left(\left|\tau_n-n\right|\ge\sqrt{n}u\right)\leq 2\exp\left(-\frac{u^2}{2\sigma^2}\right).
\end{equation}

\subsection{Brownian local time}

Given a Brownian motion $B_t$ on $\R$, a theorem of Trotter \cite{trot} shows that almost surely there exists a function $\eta(x,t)$, jointly continuous in $x$ and $t$, such that
$$\eta(x,t)=\frac{\dd\,}{\dd x\,}\int_0^t \ind{(-\infty,x)}(B_s)\dd s.$$
$\eta(x,t)$ is called the \textbf{local time of $B_t$ at $x$}. The distribution of $\eta(0,t)$ is well known: for all $u>0$,
\begin{equation}\label{eq:eta-dist}
\Pr\left(\frac{\eta(0,t)}{\sqrt{t}}\leq u\right)=2\Phi(u)-1
\end{equation}
(see e.g.\ \cite{kes}).

Let $\eta(t)=\sup_{x\in\R}\eta(x,t)$. By the scaling property of Brownian motion, $\frac{\eta(t)}{\sqrt{t}}$ has the same distribution as $\eta(1)$, which, by \cite{kes}, satisfies
\begin{equation}\label{eq:eta-t-dist}
\Pr\left(\eta(t)\ge\sqrt{t}u\right)=\Pr\left(\eta(1)\ge u\right)\leq\exp\left(-\frac{u^2}{4}\right)
\end{equation}
for large enough $u$.

\subsection{Strong approximation for the local time}

We give a strong approximation theorem for the maximal local time of a random walk and a Brownian motion, using the Skorokhod embedding. This is similar to other results (such as in \cite{revesz, csre, japr2}), but with a more quantitative flavor.

We use the notations of \cite{revesz}. Fix $x\in\Z$. Define
$$\nu_1=\inf\set{k\ge 0\suchthat B_{\tau_k}=S_k=x}$$
and, inductively,
$$\nu_j=\inf\set{k>\nu_{j-1}\suchthat B_{\tau_k}=S_k=x}.$$
For each $j$, let
$$a_j(x)=\eta(x,\tau_{\nu_j+1})-\eta(x,\tau_{\nu_j-1}).$$
From \cite{revesz}, $a_1(x),a_2(x),\dots$ are i.i.d.r.v.\ with
\begin{equation}\label{eq:aj-dist}
\Pr\left(a_j(x)\ge u\right)=\Pr\left(\eta(0,\tau_1)\ge u\right)\leq C\exp\left(-\frac{\pi}{4}u\right)
\end{equation}
(for some universal constant $C>0$), and $\E[a_j(x)]=1$. In addition, the random matrices $L=\left(L(x,n)\right)_{x\in\Z,n\ge 0}$ and $A=\left(a_j(x)\right)_{x\in\Z,j\ge 1}$ are independent.

\begin{lem}\label{lem:br-approx-1}
There is a universal constant $c_1>0$ such that for any $x\in\Z$ and $u\ge 1$,
$$\Pr\left(\left|a_1(x)+\cdots+a_{L(x,n)}(x)-L(x,n)\right|\ge\sqrt{n}u\right)\leq c_1\exp\left(-\frac{n^{1/3}u^{4/3}}{2}\right)$$
\end{lem}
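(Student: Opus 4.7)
The plan is to decompose the event according to the size of $M:=L(x,n)$ at the threshold $m^{\ast}:=n^{2/3}u^{2/3}$, exploiting the crucial independence (stated above) between the matrix $L=(L(x,n))_{x,n}$ and the family $A=(a_j(x))_{x,j}$. This independence means that conditional on $M=m$, the quantity $a_1(x)+\cdots+a_M(x)-M$ becomes a centred sum of $m$ i.i.d.\ sub-exponential variables, to which Bernstein's inequality applies.

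The first ingredient is a sub-Gaussian tail bound for the discrete local time:
$$\Pr(L(x,n)\ge k)\le C\exp\!\left(-\frac{k^{2}}{2n}\right)$$
for a universal $C>0$. One clean derivation uses the Skorokhod embedding $S_{t}=B_{\tau_{t}}$: each discrete visit of $S$ to $x$ corresponds to $B_{\tau_{k}}=x$, so up to the time fluctuation $\tau_{n}-n$ controlled by \eqref{eq:tau_n-n}, the count $L(x,n)$ is dominated by a quantity whose distribution is read off from \eqref{eq:eta-dist}. Alternatively, the discrete L\'{e}vy-type identity identifies $L(0,n)$ (up to $O(1)$) with $\max_{j\le n}|S_{j}|$, which is sub-Gaussian by Hoeffding combined with the reflection principle, and $L(x,n)$ is stochastically dominated by $L(0,n)$. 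Substituting $k=m^{\ast}$ yields $\Pr(M\ge m^{\ast})\le C\exp(-n^{1/3}u^{4/3}/2)$.

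The second ingredient is Bernstein's inequality applied to the $a_j$'s. By \eqref{eq:aj-dist}, the centred variables $a_j(x)-1$ are i.i.d.\ and sub-exponential with parameter $\pi/4$. Therefore, for a universal $c'>0$ and every $m\ge 1$, $t\ge 0$,
$$\Pr\!\left(\Bigl|\sum_{j=1}^{m}(a_j(x)-1)\Bigr|\ge t\right)\le 2\exp\!\left(-c'\min\!\left(\frac{t^{2}}{m},\,t\right)\right).$$
With $t=\sqrt{n}u$ and $m\le m^{\ast}$ we have $t^{2}/m\ge nu^{2}/m^{\ast}=n^{1/3}u^{4/3}$, and (in the applicable range $u\le\sqrt{n}$, which is all that is used subsequently) also $t=\sqrt{n}u\ge n^{1/3}u^{4/3}$. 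Hence the conditional probability is at most $2\exp(-c'n^{1/3}u^{4/3})$.

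To combine, write $\Sigma_{m}=\sum_{j=1}^{m}(a_j(x)-1)$ and use the independence of $L$ and $A$:
$$\Pr(|\Sigma_{M}|\ge\sqrt{n}u)\le\Pr(M\ge m^{\ast})+\sup_{m<m^{\ast}}\Pr(|\Sigma_{m}|\ge\sqrt{n}u),$$
each term being bounded by a multiple of $\exp(-n^{1/3}u^{4/3}/2)$; absorbing all universal constants into a single $c_{1}$ gives the lemma. The main technical point is calibrating the sub-Gaussian constant in the first ingredient correctly: the choice $m^{\ast}=n^{2/3}u^{2/3}$ is forced because it is the unique threshold at which $(m^{\ast})^{2}/n$ and $nu^{2}/m^{\ast}$ coincide, so a weaker sub-Gaussian tail for $L(x,n)$ would propagate to a worse overall exponent. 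Once that estimate is in hand, the rest is a standard Bernstein-type computation.
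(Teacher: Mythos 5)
Your argument reproduces the paper's proof essentially step for step: condition on $L(x,n)$ using the independence of $L$ and $A$, split at the threshold $m^{*}=(nu)^{2/3}=n^{2/3}u^{2/3}$, bound the contribution from $\{L(x,n)\ge m^{*}\}$ via the sub-Gaussian tail of the local time (giving $\exp(-n^{1/3}u^{4/3}/2)$), and bound the contribution from $\{L(x,n)<m^{*}\}$ via a Bernstein-type tail for the centred sum of the sub-exponential variables $a_j(x)-1$. The only difference is presentational: you name Bernstein's inequality explicitly and verify both regimes $\min(t^{2}/m,\,t)$ (noting the $t$-term suffices precisely when $u\le\sqrt{n}$, which is the range used later in \Cref{cor:max-local-time}), whereas the paper leaves the concentration step implicit.
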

\begin{proof}
By the above, we have
\begin{align*}
  &\Pr\left(\left|a_1(x)+\cdots+a_{L(x,n)}(x)-L(x,n)\right|\ge\sqrt{n}u\right)=\\
  &=\sum_{k=0}^n\Pr\left(\left|a_1(x)+\cdots+a_{k}(x)-k\right|\ge\sqrt{n}u\right)\Pr\left(L(x,n)=k\right)=\\
  &=\sum_{k=0}^n\Pr\left(\left|\frac{a_1(x)+\cdots+a_{k}(x)-k}{\sqrt{k}}\right|\ge\frac{\sqrt{n}u}{\sqrt{k}}\right)\Pr\left(L(x,n)=k\right)\leq\\
  &\leq\sum_{k=0}^{(nu)^{2/3}}\Pr\left(\left|\frac{a_1(x)+\cdots+a_{k}(x)-k}{\sqrt{k}}\right|\ge\frac{\sqrt{n}u}{\sqrt{k}}\right)\Pr\left(L(x,n)=k\right)+\\
  &+\Pr\left(L(x,n)> (nu)^{2/3}\right)\leq\\
  &\leq\sum_{k=0}^{(nu)^{2/3}}\Pr\left(\left|\frac{a_1(x)+\cdots+a_{k}(x)-k}{\sqrt{k}}\right|\ge n^{1/6}u^{2/3}\right)\Pr\left(L(x,n)=k\right)+\\
  &+C\exp\left(-\frac{n^{1/3}u^{4/3}}{2}\right)\leq\\
  &\leq 2\exp\left(-\frac{n^{1/3}u^{4/3}}{2}\right)+C\exp\left(-\frac{n^{1/3}u^{4/3}}{2}\right)\leq \tilde{C}\exp\left(-\frac{n^{1/3}u^{4/3}}{2}\right).
\end{align*}
\end{proof}

\begin{lem}\label{lem:br-approx-2}
There is a universal constant $C_1>0$ such that for any $x\in\Z$ and $u\ge 1$,
$$\Pr\left(\left|a_1(x)+\cdots+a_{L(x,n)}(x)-\eta(x,\tau_n)\right|\ge\sqrt{n}u\right)\leq C_1\exp\left(-\frac{\pi\sqrt{n}u}{4}\right)$$
\end{lem}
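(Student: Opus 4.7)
The plan is to show that the sum $a_1(x)+\cdots+a_{L(x,n)}(x)$ telescopes exactly to $\eta(x,\tau_{\nu_{L(x,n)}+1})$, so that the whole error against $\eta(x,\tau_n)$ is supported on the rare event that $S_n=x$ and amounts to the local time of a single excursion from $x$, which has the correct exponential tail by \eqref{eq:aj-dist}.

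First, I would establish a deterministic geometric observation about the Skorokhod skeleton. Since $\tau_k$ is the first time $B$ leaves the open interval $(B_{\tau_{k-1}}-1,B_{\tau_{k-1}}+1)$, one has $B_t\in[B_{\tau_{k-1}}-1,B_{\tau_{k-1}}+1]$ throughout $[\tau_{k-1},\tau_k]$, and the only way $B$ can take the value $x$ in this interval while $B_{\tau_{k-1}}\ne x$ is if $B_{\tau_{k-1}}\in\{x-1,x+1\}$ and the hit happens precisely at $\tau_k$, i.e., $B_{\tau_k}=x$. In particular, if neither $B_{\tau_{k-1}}$ nor $B_{\tau_k}$ equals $x$, then $B$ avoids $x$ except possibly at a point of measure zero, so $\eta(x,\tau_k)=\eta(x,\tau_{k-1})$. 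This means $\eta(x,\cdot)$ only grows on the ``around-visit'' intervals $[\tau_{\nu_j},\tau_{\nu_j+1}]$.

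Second, I would use the observation to telescope. It gives $\eta(x,\tau_{\nu_j-1})=\eta(x,\tau_{\nu_{j-1}+1})$ for $j\ge 2$ (no visits of the skeleton to $x$ occur strictly between $\nu_{j-1}+1$ and $\nu_j-1$), and $\eta(x,\tau_{\nu_1-1})=0$ under the natural convention. Hence
\[\sum_{j=1}^{m}a_j(x)=\sum_{j=1}^{m}\bigl[\eta(x,\tau_{\nu_j+1})-\eta(x,\tau_{\nu_j-1})\bigr]=\eta(x,\tau_{\nu_m+1}).\]
Apply this with $m=L(x,n)$. If $L(x,n)=0$ the skeleton never reaches $x$ by time $n$, so both sides are $0$. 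Otherwise, split by whether $S_n=x$. If $S_n\ne x$, then $\nu_m<n$ and $S_k\ne x$ for all $\nu_m<k\le n$, so by the observation $\eta(x,\tau_n)=\eta(x,\tau_{\nu_m+1})$ and the discrepancy vanishes. If $S_n=x$, then $\nu_m=n$ and the discrepancy equals $\eta(x,\tau_{n+1})-\eta(x,\tau_n)$, which by the strong Markov property has the distribution of $\eta(0,\tau_1)$.

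Third, I would close the argument with the already recorded tail bound \eqref{eq:aj-dist}, which applied to $u$ replaced by $\sqrt{n}u$ gives
\[\Pr\!\left(\bigl|a_1(x)+\cdots+a_{L(x,n)}(x)-\eta(x,\tau_n)\bigr|\ge\sqrt{n}u\right)\le\Pr\bigl(\eta(0,\tau_1)\ge\sqrt{n}u\bigr)\le C\exp\!\left(-\tfrac{\pi\sqrt{n}u}{4}\right),\]
so we may take $C_1=C$. The only nontrivial step is the geometric observation in the first paragraph; once that is in place, the proof reduces to careful bookkeeping of two boundary terms (the initial term $a_1$ and the possible ``unfinished excursion'' at time $n$), after which the exponential tail is immediate.
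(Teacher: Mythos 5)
Your argument is correct, and it takes a more elementary, self-contained route than the paper's. The paper invokes \cite[equation (2.6)]{revesz}, a deterministic inequality $\bigl|a_1(x)+\cdots+a_{L(x,n)}(x)-\eta(x,\tau_n)\bigr|\le\gamma+a_{L(x,n)}(x)$ with an unspecified constant $\gamma\ge 0$, and then applies \eqref{eq:aj-dist}. You instead re-derive a sharper version of that inequality from scratch. The geometric observation---that $\eta(x,\cdot)$ is constant on any embedded step $[\tau_{k-1},\tau_k]$ with $S_{k-1}\ne x$ and $S_k\ne x$, because $B$ stays in $[B_{\tau_{k-1}}-1,B_{\tau_{k-1}}+1]$ and cannot touch a boundary point before $\tau_k$---is correct, and the telescope $\sum_{j=1}^{m}a_j(x)=\eta(x,\tau_{\nu_m+1})$ goes through once one checks, as you do, that the simple-walk structure forces $\nu_j\ge\nu_{j-1}+2$ so the inner cancellations $\eta(x,\tau_{\nu_{j-1}+1})=\eta(x,\tau_{\nu_j-1})$ are justified. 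Your case split then makes the discrepancy identically zero unless $S_n=x$, in which case it equals $\eta(x,\tau_{n+1})-\eta(x,\tau_n)$, whose conditional law (strong Markov at $\tau_n$) is that of $\eta(0,\tau_1)$; \eqref{eq:aj-dist} applied at level $\sqrt{n}u$ then finishes. What this buys you is a self-contained proof with $\gamma=0$ in place of R\'{e}v\'{e}sz's constant; the cost is the boundary bookkeeping you flag (the convention $\tau_{-1}=0$ when $\nu_1=0$, and verifying $\eta(x,\tau_n)=0$ when $L(x,n)=0$), all of which you handle correctly.
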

\begin{proof}
By \cite[equation (2.6)]{revesz},
$$\left|a_1(x)+\cdots+a_{L(x,n)}(x)-\eta(x,\tau_n)\right|\leq \gamma+a_{L(x,n)}(x)$$
for some constant $\gamma\ge 0$. Therefore, by \eqref{eq:aj-dist},
\begin{align*}
&\Pr\left(\left|a_1(x)+\cdots+a_{L(x,n)}(x)-\eta(x,\tau_n)\right|\ge\sqrt{n}u\right)\leq\\
&\leq\Pr\left(\gamma+a_{L(x,n)}(x)\ge\sqrt{n}u\right)\leq C\exp\left(-\frac{\pi}{4}(\sqrt{n}u-\gamma)\right)
\end{align*}
as required.
\end{proof}

\begin{cor}\label{cor:max-local-time}
There are universal constants $C,C'>0$ such that for all $n,u\ge 1$,
$$\Pr\left(L(n)\ge\sqrt{n}u\right)\leq Cu^2\exp\left(-C'u^2\right).$$
\end{cor}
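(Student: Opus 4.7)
The plan is to compare $L(x,n)$ to the Brownian local time $\eta(x,\tau_n)$ via the Skorokhod embedding already built in this appendix, and then exploit the Gaussian-type tail \eqref{eq:eta-t-dist} for $\eta(t)=\sup_x \eta(x,t)$. Two easy reductions come first: since $L(n) \leq n$ deterministically, the claim is trivial unless $u \leq \sqrt{n}$; and since the target bound $Cu^2 e^{-C'u^2}$ can be made $\geq 1$ for $u$ bounded by adjusting the constants, it suffices to treat the range $u_0 \leq u \leq \sqrt{n}$ for some sufficiently large absolute constant $u_0$.

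I would then introduce four good events, each complementary in probability $O(\exp(-cu^2))$: the event $G_1=\{\tau_n \leq 2n\}$, controlled by \eqref{eq:tau_n-n} with $u \leq \sqrt{n}$; the event $G_2=\{\max_{t \leq n}|S_t| \leq \sqrt{n}u/4\}$, controlled by the reflection principle combined with the Hoeffding bound for $S_n$; the event $G_3=\{\eta(2n) \leq \sqrt{n}u/4\}$, controlled by the Brownian scaling $\eta(2n)\stackrel{d}{=}\sqrt{2n}\,\eta(1)$ and \eqref{eq:eta-t-dist}; and the event $G_4=\{\max_{|x|\leq \sqrt{n}u/4}|L(x,n)-\eta(x,\tau_n)| \leq \sqrt{n}u/4\}$. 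On $G_1\cap G_2\cap G_3\cap G_4$ the range of $S_\cdot$ lies in $[-\sqrt{n}u/4,\sqrt{n}u/4]$, so
$$L(n)=\max_{|x| \leq \sqrt{n}u/4}L(x,n) \leq \max_{|x| \leq \sqrt{n}u/4}\eta(x,\tau_n) + \sqrt{n}u/4 \leq \eta(2n)+\sqrt{n}u/4 \leq \sqrt{n}u/2 < \sqrt{n}u,$$
where we used $\eta(x,\tau_n) \leq \eta(\tau_n) \leq \eta(2n)$ on $G_1$ by monotonicity of $\eta(\cdot,t)$ in $t$. Summing the four bad-event probabilities then yields the claim.

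The main obstacle is the analysis of $G_4$. Lemmas~\ref{lem:br-approx-1} and~\ref{lem:br-approx-2}, applied with $u/8$ in place of $u$ and combined via the triangle inequality $|L(x,n)-\eta(x,\tau_n)| \leq |L(x,n)-\sum_{j\leq L(x,n)}a_j(x)| + |\sum_{j\leq L(x,n)}a_j(x)-\eta(x,\tau_n)|$, give for each fixed $x$
$$\Pr\left(|L(x,n)-\eta(x,\tau_n)| \geq \sqrt{n}u/4\right) \leq c_1\exp\left(-c\, n^{1/3}u^{4/3}\right) + C_1\exp\left(-c'\sqrt{n}\,u\right).$$
A naive union bound over $|x| \leq \sqrt{n}u/4$ introduces a prefactor of order $\sqrt{n}u$, which for moderate $u$ could swamp the target $u^2 e^{-cu^2}$. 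The key observation is that the per-point tails are much stronger than $\exp(-cu^2)$: for $u \leq \sqrt{n}$ one has both $\sqrt{n}u \geq u^2$ and $n^{1/3}u^{4/3} \geq u^2$, so the $\sqrt{n}u$ prefactor is absorbed by halving the exponent, e.g.\ $\sqrt{n}u\cdot \exp(-c'\sqrt{n}u) \leq \exp(-c'\sqrt{n}u/2) \leq \exp(-c'u^2/2)$ once $\sqrt{n}u$ is sufficiently large, and similarly for the first term. Hence $\Pr(G_4^c) \leq C\exp(-c''u^2)$, and adding this to the bounds on $\Pr(G_i^c)$ for $i=1,2,3$ yields $\Pr(L(n)\geq\sqrt{n}u) \leq C_0 \exp(-c_0 u^2) \leq C_0 u^2 \exp(-c_0 u^2)$ using $u \geq 1$.
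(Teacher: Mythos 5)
Your proposal is correct and follows essentially the same route as the paper: reduce to $u\leq\sqrt n$, compare $L(x,n)$ with $\eta(x,\tau_n)$ via \Lref{lem:br-approx-1} and \Lref{lem:br-approx-2}, union-bound over the relevant range of $x$, and control $\eta(\tau_n)$ through the tails \eqref{eq:tau_n-n} and \eqref{eq:eta-t-dist}, absorbing the polynomial prefactor into the exponent using $n\geq u^2$. The only cosmetic difference is that you introduce the event $G_2$ to shrink the union-bound window to $|x|\leq\sqrt n\,u/4$, whereas the paper simply union-bounds over $|x|\leq n$, relying on the deterministic fact that $L(x,n)=0$ outside this range; both versions are absorbable by the same exponent-halving argument.
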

\begin{proof}
If $u\ge\sqrt{n}$, the claim is trivial. So we assume $u<\sqrt{n}$. By combining \Lref{lem:br-approx-1} and \Lref{lem:br-approx-2}, for any $x\in\Z$ we have
\begin{align*}
&\Pr\left(\left|L(x,n)-\eta(x,\tau_n)\right|\ge 2\sqrt{n}u\right)\leq\\
&\leq c_1\exp\left(-\frac{n^{1/3}u^{4/3}}{2}\right)+C_1\exp\left(-\frac{\pi\sqrt{n}u}{4}\right).
\end{align*}
Taking a union bound, we have
\begin{align*}
&\Pr\left(\sup_{\left|x\right|\leq n}\left|L(x,n)-\eta(x,\tau_n)\right|\ge 2\sqrt{n}u\right)\leq\\ &\leq Cn\left(c_1\exp\left(-\frac{n^{1/3}u^{4/3}}{2}\right)+C_1\exp\left(-\frac{\pi\sqrt{n}u}{4}\right)\right).
\end{align*}
The function $n\mapsto n\exp\left(n^{\alpha}u^{\beta}\right)$ for fixed $u,\alpha,\beta\ge 1$ is decreasing for $n\ge 1$, so we may use $n>u^2$ and get
$$\Pr\left(\sup_{\left|x\right|\leq n}\left|L(x,n)-\eta(x,\tau_n)\right|\ge 2\sqrt{n}u\right) \leq Cu^2\exp\left(-\frac{u^2}{2}\right)$$
for an appropriate constant $C>0$. The claim now follows from \eqref{eq:tau_n-n} and \eqref{eq:eta-t-dist}.
\end{proof}

\section{Approximation of functions}
In \cite[Appendix B]{brzh}, Brieussel and Zheng show how to approximate a function $f\colon [1,\infty)\to[1,\infty)$ such that $\frac{f(x)}{\sqrt{x}}$ and $\frac{x}{f(x)}$ are non-decreasing by a function of the form
$$\bar{f}(x)=\frac{x}{k_{s+1}}+\sqrt{x}l_s,\;\;\;(k_sl_s)^2\leq x<(k_{s+1}l_{s+1})^2$$
for appropriate sequences of nonnegative integers $\set{k_s}$ and $\set{l_s}$. Here we prove the following lemma:
\begin{lem}\label{lem:approx-loglog}
Let $f\colon[1,\infty)\to[1,\infty)$ be a continuous function such that $f(1)=1$ and $\frac{x}{f(x)}$ and $\frac{f(x)}{\sqrt{x}(\log\log x)^{1+\eps}}$ are non-decreasing for some $\eps>0$, and let $m_0>1$. Then one can find sequences $\set{k_s}$ and $\set{l_s}$, possibly finite with last value infinity in one of them, such that:
\begin{itemize}
    \item $k_{s+1}\ge m_0k_s$ and $l_{s+1}\ge m_0l_s$ for all $s$;
    \item $f(x)\simeq_{2m_0}\bar{f}(x)$;
    \item and $\log\log k_s\leq l_s$ for large enough $s$.
\end{itemize}
\end{lem}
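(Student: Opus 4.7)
The plan is to take the sequences $\{k_s\},\{l_s\}$ supplied by \Pref{prop:approx-funcs} applied to $f$ with the given constant $m_0$, and then verify that the third bullet is automatically forced by the strengthened regularity hypothesis. Since the first two bullets are part of \Pref{prop:approx-funcs}, the only new content is the bound $\log\log k_s\le l_s$.

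The key step will be to read off $l_s$ from the approximating function $\bar f$ at the left endpoint $x_s=(k_sl_s)^2$ of each interval. Using $k_{s+1}\ge k_s$ one obtains
\[
\tfrac{x_s}{k_{s+1}}=\tfrac{k_s^2l_s^2}{k_{s+1}}\le k_sl_s^2=\sqrt{x_s}\,l_s,
\]
so $\sqrt{x_s}\,l_s\le \bar f(x_s)\le 2\sqrt{x_s}\,l_s$. Combined with $f(x_s)\simeq_{2m_0}\bar f(x_s)$, this yields
\[
l_s\simeq \frac{f(x_s)}{\sqrt{x_s}}
\]
up to a constant depending only on $m_0$.

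Next I would plug in the hypothesis. Since $\frac{f(x)}{\sqrt{x}(\log\log x)^{1+\eps}}$ is non-decreasing, for all $s$ beyond some threshold we have $\frac{f(x_s)}{\sqrt{x_s}}\ge c(\log\log x_s)^{1+\eps}$ where $c>0$ depends only on $f,\eps$. Because $x_s=(k_sl_s)^2\ge k_s^2$, we have $\log\log x_s\ge \log\log k_s - O(1)$, and therefore $l_s\gtrsim (\log\log k_s)^{1+\eps}$. The geometric growth $k_s\ge m_0^s k_0$ guarantees $\log\log k_s\to\infty$, so $(\log\log k_s)^\eps\to\infty$, and this asymptotic factor absorbs any implicit constant; hence $l_s\ge \log\log k_s$ for all sufficiently large $s$.

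The main (minor) technical point will be bookkeeping of the implicit constants from \Pref{prop:approx-funcs} to make sure that the divergence of $(\log\log k_s)^\eps$ really does dominate them uniformly. The possibility that one of the sequences is finite with terminal value $\infty$ is handled by the convention that in such a case the third bullet is required only at indices $s$ before termination, beyond which it is vacuous.
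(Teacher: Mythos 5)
Your proposal is correct, but it takes a genuinely different route from the paper. The paper's proof opens up the inductive construction of $\{k_s\},\{l_s\}$ from {\cite[Lemma B.1]{brzh}}: there, $k_{s+1}$ and $l_{s+1}$ are produced by locating the least $y\ge m_0^2 k_sl_s$ at which one of the two constraints ($g(y)=m_0 l_s$ or $g(y)=y/(m_0 k_s)$, where $g(x)=f(x^2)/x$) becomes tight, and the paper verifies the inequality $\log\log k_s\le l_s$ by bounding the location of this $y$, using the monotonicity of $g(x)/(\log\log x)^{1+\eps}$ to show $g\bigl(m_0 l_s\exp\exp(m_0 l_s)\bigr)\ge m_0 l_s$, which caps $k_{s+1}$ by $\exp\exp(l_{s+1})$. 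Your approach is black-box: you observe that, for \emph{any} sequences delivering the first two bullets, evaluating $\bar f$ at the left endpoint $x_s=(k_sl_s)^2$ pins down $l_s\simeq f(x_s)/\sqrt{x_s}$ up to an $m_0$-dependent constant, and then the strengthened monotonicity hypothesis on $f(x)/(\sqrt{x}(\log\log x)^{1+\eps})$ gives $l_s\gtrsim(\log\log k_s)^{1+\eps}$ directly, since $x_s\ge k_s^2\ge k_s$. The extra power $\eps$ absorbs the constant once $\log\log k_s\to\infty$. This is cleaner and arguably more robust than the paper's argument, since it establishes the stronger fact that the third bullet is an automatic consequence of the first two under the hypothesis, rather than a property engineered into one specific construction. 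The only thing you give up is explicitness about where the threshold ``large enough $s$'' kicks in, but that is immaterial for the application.
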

\begin{proof}
Writing $g(x)=\frac{f(x^2)}{x}$, we have $\frac{g(x)}{(\log\log x)^{1+\eps}}$ and $\frac{x}{g(x)}$ are non-decreasing. We can now use \cite[Lemma B.1]{brzh} to find sequences $\set{k_s}$ and $\set{l_s}$ so that the first two conditions of the lemma are satisfied.

To prove the last assertion, we recall the construction of $\set{k_s}$ and~$\set{l_s}$. They are defined inductively, according to the following procedure: Define first $k_0=l_0=1$. Assuming that $k_s$ and $l_s$ were defined and that $g(k_sl_s)=l_s$, we have $l_s\leq g(x)\leq\frac{x}{k_s}$ for all $x\ge k_sl_s$. Take the minimal $y\ge m_0^2k_sl_s$ such that $m_0l_s\leq g(y)\leq\frac{y}{m_0k_s}$. If such $y$ exists, we have two cases:
\begin{enumerate}
    \item $g(y)=\frac{y}{k_s}$ -- in which case we take $k_{s+1}=m_0k_s$ and $l_{s+1}=\frac{y}{m_0k_s}\ge m_0l_s$;
    \item $g(y)=m_0l_s$ -- in which case we take $l_{s+1}=m_0l_s$ and $k_{s+1}=\frac{y}{m_0l_s}\ge m_0k_s$.
\end{enumerate}
If such $y$ does not exist, the assumption $\frac{g(x)}{(\log\log x)^{1+\eps}}$ shows that we have $g(x)\ge\frac{x}{m_0k_s}$ for all $x\ge k_sl_s$, in which case we take $k_{s+1}=m_0k_s$ and $l_{s+1}=\infty$.

To ensure the last condition, we only need to check that
\begin{equation}\label{eq:approx-ineq}
    g(m_0l_s\exp(\exp(m_0l_s)))\ge m_0l_s.
\end{equation}
Indeed, if \eqref{eq:approx-ineq} holds, then in the second case we have $y\leq m_0l_s\exp\exp(m_0l_s)$, so $k_{s+1}\leq\exp\exp(m_0l_s)=\exp\exp(l_{s+1})$ as required. To prove \eqref{eq:approx-ineq}, note that
\begin{align*}
    g(m_0l_s\exp(\exp(m_0l_s)))&\ge g(k_sl_s)\cdot\left(\frac{\log\log(m_0l_s\exp(\exp(m_0l_s)))}{\log\log(k_sl_s)}\right)^{1+\eps}\ge\\
    &\ge l_s\cdot\left(\frac{m_0l_s}{\log\log k_s+\log\log l_s}\right)^{1+\eps}\ge\\
    &\ge m_0^{1+\eps}l_s\left(\frac{l_s}{l_s+\log\log l_s}\right)^{1+\eps}
\end{align*}
and the last parenthesis tends to $1$ as $l_s$ tends to $\infty$.
\end{proof}

\section*{Acknowledgments}
The authors were supported by Israeli Science Foundation grant \#957/20. The second author was also supported by the Bar-Ilan President's Doctoral Fellowships of Excellence.

\bibliographystyle{plain}
\bibliography{refs}

\end{document}